\title[Essential extensions and the nilpotent filtration]{Essential extensions, the nilpotent filtration and the Arone-Goodwillie tower}
\author[G. Powell]{Geoffrey Powell}
\address{Laboratoire Angevin de Recherche en Mathématiques, UMR 6093, 
  Faculté des Sciences, Université d'Angers, 
2 Boulevard Lavoisier,
49045 Angers, France}
\email{Geoffrey.Powell@math.cnrs.fr}
\keywords{Steenrod algebra -- unstable module -- nilpotent filtration -- Arone-Goodwillie tower}
\subjclass[2000]{Primary 55S10; Secondary 18E10}
\date{}
\newtheorem{THM}{Theorem}
\newtheorem{thm}{Theorem}[section]
\newtheorem{prop}[thm]{Proposition}
\newtheorem{cor}[thm]{Corollary}
\newtheorem{lem}[thm]{Lemma}
\theoremstyle{definition}
\newtheorem{defn}[thm]{Definition}
\newtheorem{exam}[thm]{Example}
\theoremstyle{remark}
\newtheorem{rem}[thm]{Remark}
\newtheorem{nota}[thm]{Notation}
\newtheorem{hyp}[thm]{Hypothesis}
\newcommand{\f}{\mathscr{F}}
\newcommand{\cale}{\mathscr{E}}
\newcommand{\nil}{\mathscr{N}il}
\newcommand{\ddl}{\mathfrak{Q}}
\newcommand{\symm}{\mathfrak{S}}
\renewcommand{\hom}{\mathrm{Hom}}
\newcommand{\ext}{\mathrm{Ext}}
\newcommand{\unstalg}{\mathscr{K}}
\newcommand{\unstalgaug}{\unstalg_a}
\newcommand{\dash}{\mbox{-}}
\newcommand{\largetensor}{\underline{\underline{\otimes}}}
\newcommand{\destab}{\Omega^\infty}
\newcommand{\amod}{\mathscr{M}}
\newcommand{\unst}{\mathscr{U}}
\renewcommand{\phi}{\varphi}
\renewcommand{\epsilon}{\varepsilon}
\newcommand{\nat}{\mathbb{N}}
\newcommand{\zed}{\mathbb{Z}}
\newcommand{\field}{\mathbb{F}}
\newcommand{\obj}{\mathrm{Ob}\ }
\newcommand{\op}{^\mathrm{op}}
\newcommand{\cala}{\mathscr{A}}
\newcommand{\id}{\mathrm{Id}}
\newcommand{\lc}{\mathfrak{C}}
\newcommand{\alunst}{\widetilde{\unst}}
\newcommand{\alnil}{\widetilde{\nil}}
\def\nlp{\mathrm{nil}}
\begin{document}

\begin{abstract}
The spectral sequence associated to the Arone-Goodwillie tower for the $n$-fold loop space functor 
is used to show that the first two non-trivial layers of the nilpotent filtration of the reduced mod $2$ cohomology of  a (sufficiently connected) space with 
nilpotent cohomology are comparable. This relies upon the theory of unstable modules over the mod $2$ Steenrod algebra, together 
with properties of a generalized class of almost unstable modules which is introduced here.

An essential ingredient of the proof is a non-vanishing result for certain extension groups in the category of unstable modules 
localized away from nilpotents.
\end{abstract}

\maketitle

\section{Introduction}
\label{sect:intro}

A fundamental question in algebraic topology is to ask what modules over the 
mod $p$ Steenrod algebra $\cala$ can be realized as 
the reduced mod $p$ cohomology $H^* (X)$ of a space. First obstructions are 
provided by the fact that the module must be unstable and that this structure
should
be compatible with the cup product. For example, Steenrod asked what polynomial 
algebras can be realized as the cohomology of a space. At the opposite extreme 
one 
can ask what unstable algebras with trivial cup square (at the prime two) can 
be 
realized;  a partial response is provided below in Theorem \ref{THM:K}.

The structure theory of the category $\unst$ of unstable modules allows the
formulation of precise questions of particular interest in the case where $H^* 
(X)$ is  nilpotent. 
 Kuhn \cite{Kuhn_annals} proposed a series of highly-influential 
non-realization 
conjectures, postulating significant restrictions on the structure of $H^* (X)$ 
as an unstable module.
 Many of these are now theorems \cite{Schwartz,CGPS}. The conjectures are 
phrased in terms of the nilpotent filtration of the category $\unst$; this is a 
decreasing filtration, where $\nil_n$
   is the smallest localizing subcategory of $\unst$ containing all $n$-fold 
suspensions. A general question is the following: what can be said about the 
structure of $H^* (X)$ as an unstable module 
   if it belongs to $\nil_n$?
   
   The condition $H^* (X) \in \nil_n$ has topological significance when Lannes' 
mapping space technology can be applied: it is equivalent
to $\mathrm{map} (BV, X)$ being 
$(n-1)$-connected for all elementary abelian $p$-groups $V$. It is clear that 
$n$-fold suspensions $\Sigma^n Y$ satisfy the hypothesis and, 
   since the algebraic suspension restricts to a functor $\Sigma: \nil_n 
\rightarrow\nil_{n+1}$, it is most interesting to consider the case where $X$
is 
not a suspension. 
   
   The largest submodule of an unstable module $M$ which lies in $\nil_n$ is 
written $\nlp_n M$; if $M$  lies in $\nil_n \backslash \nil_{n+1}$, then there 
is a short 
   exact sequence of unstable modules:
   \[
    0
    \rightarrow 
    \nlp_{n+1} M
    \rightarrow 
    M
    \rightarrow 
    \Sigma^n \rho_n M
    \rightarrow 
    0
   \]
where $\rho_n M$ is a reduced unstable module (ie contains no non-trivial
nilpotent 
submodule) which is non-zero. If in addition $M$ is $n$-connected, then $\rho_n 
M$ is connected (trivial in degree zero).

It is now known that (at least up to nilpotent unstable modules) there are few 
restrictions which can be placed on $\rho_n M$ (for $n \geq 1$), following the 
affirmation of the Lannes and Schwartz 
Artinian conjecture \cite{2014arXiv1408.3694P,2014arXiv1409.1670S}. Namely, 
generalizing Kuhn's 
observation \cite{Kuhn_annals}, examples can be manufactured by considering the 
topological 
realization of the beginning of an injective resolution (modulo nilpotents) 
of a given reduced module and forming the homotopy cofibre:
\[
\mathrm{hocofib} \Big\{ \bigvee_i \Sigma^n BV(i)_+ \rightarrow 
\bigvee_j\Sigma^n 
 BV(j)_+ 
\Big\}
\]
where $\{ V(i)\}$, $\{V(j)\}$ are finite sets of finite rank elementary abelian 
$p$-groups. The 
Artinian conjecture ensures that all finitely cogenerated modules admit such 
finite type presentations
 (modulo nilpotents).

Kuhn's non-realization conjectures highlight the interest of the case where 
$\rho_n M$ is finitely generated over the Steenrod algebra. If $M$ is 
$n$-connected,  
 the cases which have already been proved show that $\nlp_{n+1}M$ must actually 
be large; for example, $M$ cannot itself be finitely generated under these 
hypotheses. These results rely upon 
 Lannes' $T$-functor technology to reduce to the smallest non-trivial case 
\cite{CGPS}.
 
 The purpose of this paper is to show how the analysis of the first two 
non-trivial columns of the spectral sequence associated to the Arone-Goodwillie 
tower for the functor $ X \mapsto \Sigma^\infty \Omega^n X$ 
 impose conditions on the first two non-trivial layers of the nilpotent
filtration of 
$H^*(X)$. In the case $n=1$, this is an application of the Eilenberg-Moore 
spectral sequence 
and the result recovered is a generalization 
 of the main result of \cite[Section 6]{CGPS}. The case $n >1$ is new and arose 
as an 
offshoot of the author's programme to express the non-realization results 
obtained by Kuhn \cite{Kuhn_nonrealization} (for $p=2$) and  
 by Büscher, Hebestreit, Röndigs and Stelzer \cite{BHRS} (for odd primes), in 
terms of
obstruction 
classes in suitable $\ext$ groups. The key result is provided, on passage to 
the quotient category $\unst/ \nil$, by a 
 generalization of a theorem of Kuhn (see Theorem \ref{thm:split_mono_gen}) 
showing that certain $\ext$ groups in $\unst/ \nil$ are non-trivial.
 
An application of the case $n=1$ is the following:

\begin{THM}
 \label{THM:USigmaM}
 Let $M$ be a connected unstable module over $\field_2$ of finite type such 
that 
$\rho_0 M$ is non-zero and finitely generated. 
 Then $U (\Sigma M)$, the Massey-Peterson enveloping algebra of the suspension 
of $M$, is not realizable as the $\field_2$-cohomology of a space.
 \end{THM}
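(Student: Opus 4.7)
The plan is to proceed by contradiction: suppose $U(\Sigma M) \cong H^*(X;\field_2)$ for some space $X$, and apply the paper's main result in the case $n=1$ (the Eilenberg--Moore spectral sequence analysis) to the first two non-trivial layers of the nilpotent filtration of $U(\Sigma M)$, so as to derive a contradiction.

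First I would analyze the nilpotent filtration of $U(\Sigma M)$ via its natural length filtration by symmetric powers. Since $M$ is connected, each generator $\Sigma x \in \Sigma M$ (of positive degree) satisfies $(\Sigma x)^2 = Sq^{|\Sigma x|} (\Sigma x) = \Sigma \, Sq^{|\Sigma x|} x = 0$ by the instability relation, so by the Cartan formula every element of $U(\Sigma M)$ has zero cup square and $U(\Sigma M)$ lies in $\nil_1$. Identifying the length-one quotient gives
\[
U(\Sigma M) / \nlp_2 U(\Sigma M) \cong \Sigma \rho_0 M,
\]
so that $\rho_1 U(\Sigma M) \cong \rho_0 M$, which is non-zero, reduced and finitely generated by hypothesis.

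Next I would identify the second non-trivial layer $\rho_2 U(\Sigma M)$, arising from the symmetric square $S^2(\Sigma M)$ as a specific quadratic functor of $M$, and extract the canonical class of the extension
\[
 0 \rightarrow \Sigma^2 \rho_2 U(\Sigma M) \rightarrow U(\Sigma M) / \nlp_3 U(\Sigma M) \rightarrow \Sigma \rho_0 M \rightarrow 0
\]
in the quotient category $\unst/\nil$. The freeness of the Massey--Peterson construction should force this extension to split modulo nilpotents, since the length filtration of $U(\Sigma M)$ admits a natural retraction compatible with the Steenrod action in $\unst/\nil$. By contrast, applying the case $n=1$ of the main theorem to the hypothetical $X$ --- via the Eilenberg--Moore spectral sequence combined with the generalization of Kuhn's theorem (Theorem \ref{thm:split_mono_gen}) providing non-vanishing of the relevant $\ext^1_{\unst/\nil}$ --- forces precisely this extension class to be non-trivial whenever $\rho_1 H^*(X)$ is non-zero and finitely generated. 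The contradiction then proves the theorem.

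The main obstacle I anticipate is the precise identification of $\rho_2 U(\Sigma M)$ as an unstable module and the verification that the canonical extension class in $\ext^1_{\unst/\nil}$ vanishes for the free construction: this requires reconciling the algebraic length filtration of $U(\Sigma M)$ with the nilpotent filtration and carefully controlling the interaction of the Cartan formula with iterated top Steenrod squares. The matching non-vanishing result supplied by the paper's main theorem, which ultimately traces back to the non-triviality of certain $\ext$-groups in $\unst/\nil$, is the other non-trivial ingredient but is used here as a black box.
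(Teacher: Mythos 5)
Your overall strategy coincides with the paper's: reduce to the case $n=1$ (the Eilenberg--Moore spectral sequence), observe that $U(\Sigma M)\cong\Lambda^*(\Sigma M)$ has identically zero cup square, and identify $\rho_1\overline{U(\Sigma M)}\cong\rho_0 M$; these are exactly the ingredients the paper feeds into Corollary \ref{cor:refined_COR} (hence Theorem \ref{thm:main_n=1}). The step of computing the bottom layer of the nilpotent filtration is correct and matches the paper.

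However, the obstruction-theoretic step as you formulate it is vacuous. You propose to detect the contradiction via the class of the extension
\[
 0 \rightarrow \Sigma^2 \rho_2 U(\Sigma M) \rightarrow U(\Sigma M)/\nlp_3 U(\Sigma M) \rightarrow \Sigma \rho_0 M \rightarrow 0
\]
``in the quotient category $\unst/\nil$'', and to derive a contradiction from a non-vanishing result for $\ext^1_{\unst/\nil}$. But both ends of this sequence are suspensions, hence nilpotent, hence sent to $0$ by the localization $l:\unst\rightarrow\unst/\nil\simeq\f_\omega$; so $\ext^1_{\unst/\nil}(l(\Sigma\rho_0 M),\ l(\Sigma^2\rho_2 U(\Sigma M)))=\ext^1(0,0)=0$, and ``the extension class in $\unst/\nil$'' is trivially zero, conveying no information. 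The non-vanishing supplied by Theorem \ref{thm:split_mono_gen} is not a non-vanishing of $\ext^1_{\unst/\nil}$ of nilpotent modules, but of the classes $\omega_X\in\ext^2_\f(K,F_1)$ built in Lemma \ref{lem:omega_X_descriptions_n=1}, which are extension classes between \emph{reduced} objects (localizations of $\rho_1$ and of kernels inside $S^2\circ F_1$). The correct way to phrase the obstruction in your setting is via the morphism $\delta_1=Sq_0:\Phi\rho_1 H^*(X)\rightarrow\rho_2 H^*(X)$ of Remark \ref{rem:identify_delta}: Theorem \ref{thm:main_n=1} / Corollary \ref{cor:refined_COR} say this map must have kernel in $\unst_{d-1}$, in particular be non-zero, whereas for $U(\Sigma M)$ it is identically zero because the cup square is. Replace the $\ext^1_{\unst/\nil}$ argument with this statement and the proof becomes the paper's.

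Two minor imprecisions: the second layer of $U(\Sigma M)$ comes from $\Lambda^2(\Sigma M)$ (not $S^2(\Sigma M)$, since the square is zero the product map kills the Frobenius part); and ``the extension splits'' is a strictly stronger claim than what you need and than what holds---the relevant fact is $\delta_1=0$, not a splitting of the short exact sequence.
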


 The enveloping algebra $U (\Sigma M)$ is isomorphic to the exterior algebra 
$\Lambda^* (\Sigma M)$, so this   is a case of the following Theorem 
 (for a more refined statement, see Corollary \ref{cor:refined_COR}), 
in which $QK$ denotes the 
module of indecomposables:

\begin{THM}
\label{THM:K}
Let $K$ be a connected unstable algebra of finite type over $\field_2$ such 
that 
the cup square $Sq_0$ acts trivially on the augmentation ideal $\overline{K}$. 
If $QK$ is a $1$-connected unstable module such that 
$\rho_1 QK$ is non-zero and finitely generated,  then $K$ is not realizable as 
the  
$\field_2$-cohomology of a space. 
\end{THM}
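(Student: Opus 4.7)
The plan is to argue by contradiction: suppose $K\cong H^*(X;\field_2)$ for some space $X$. The intermediate task is to identify the first two non-trivial layers of the nilpotent filtration of $K$ in terms of the hypotheses on $QK$, and then apply the case $n=1$ of the paper's main spectral-sequence analysis together with Theorem \ref{thm:split_mono_gen}.

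First I would carry out a preliminary reduction. Since no positive-degree decomposable can have degree $1$, the surjection $\overline{K}\twoheadrightarrow QK$ is an isomorphism in degree $1$, and the $1$-connectivity of $QK$ then forces $\overline{K}^1 = 0$. Hence $K$ is $1$-connected and one may take $X$ to be a $1$-connected space. Moreover, the hypothesis $Sq_0|_{\overline{K}} = 0$ -- every positive-degree element of $K$ squares to zero -- places $\overline{K}$ in the subcategory $\nil_1$, so that $\rho_0 K$ reduces to the ground field and the first non-trivial layer of the nilpotent filtration of $K$ is $\rho_1 \overline{K}$.

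The heart of the argument is to show that the quotient $\overline{K}\twoheadrightarrow QK$ becomes an isomorphism after projection to $\unst/\nil_2$, and hence induces an isomorphism $\rho_1 \overline{K} \xrightarrow{\sim} \rho_1 QK = R$. This reduces to showing that the decomposable submodule $\overline{K}^2 \subset \overline{K}$ lies in $\nil_2$. Morally this follows because $\overline{K}^2$ is a quotient of $\overline{K}\otimes\overline{K}$, a tensor product of two modules with vanishing $Sq_0$, and the identification $\Sigma A \otimes \Sigma B \cong \Sigma^2(A\otimes B)$ at the level of unstable modules suggests $\overline{K}\otimes\overline{K}\in\nil_2$. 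Making this precise requires care with the instability conditions on the tensor product and with the fact that $K$ is only assumed to satisfy $Sq_0|_{\overline{K}}=0$ rather than to be the free exterior algebra $U(QK)$ covered by Theorem \ref{THM:USigmaM}.

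With $\rho_1 K \cong R$ identified as a non-zero, finitely generated reduced module, the final step is to apply the Arone-Goodwillie spectral-sequence analysis for $X\mapsto \Sigma^\infty \Omega X$ (case $n=1$), which relates the first two non-trivial layers $\rho_1 H^*(X)$ and $\rho_2 H^*(X)$. Combined with Theorem \ref{thm:split_mono_gen}, which produces non-trivial obstruction classes in $\ext$ groups in $\unst/\nil$ out of finitely generated non-zero reduced modules, this yields the desired contradiction with realizability. I expect the main obstacle to be the intermediate layer identification of the previous paragraph: pushing $\overline{K}^2$ into $\nil_2$ for a general $K$ (not just a free exterior algebra) is the delicate point, presumably formalised in the refined statement of Corollary \ref{cor:refined_COR}.
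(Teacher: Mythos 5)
Your proposal takes essentially the same route as the paper: the paper proves Theorem~\ref{THM:K} by deducing it from Corollary~\ref{cor:refined_COR}, and the content of the deduction (spelled out in the remark following that corollary) is precisely the identification $\rho_1\overline{K}\cong\rho_1 QK$ which you isolate as the key intermediate step, followed by appeal to the $n=1$ analysis and Theorem~\ref{thm:split_mono_gen}. Two small comments. First, the step you flag as ``delicate'' is actually clean: $Sq_0|_{\overline{K}}=0$ puts $\overline{K}\in\nil_1$; the fact that $\otimes$ restricts to $\nil_1\times\nil_1\to\nil_2$ (quoted in the proof of Proposition~\ref{prop:alunst}) together with $\cala$-linearity of the cup product makes $\overline{K}^2$ a quotient of an object of $\nil_2$, hence in $\nil_2$ since $\nil_2$ is a Serre subcategory, and the isomorphism $\rho_1\overline{K}\cong\rho_1 QK$ follows immediately --- Corollary~\ref{cor:refined_COR} does not ``formalise'' this step, it simply assumes the hypotheses are already stated in terms of $\rho_1\overline{K}$. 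Second, to invoke Corollary~\ref{cor:refined_COR} you should also record that a non-zero finitely generated reduced unstable module concentrated in positive degrees automatically lies in $\unst_d\setminus\unst_{d-1}$ for some $d>0$ (via Proposition~\ref{prop:krull_poly} and the exclusion of $\unst_0$), and that the remaining hypothesis on $Sq_0:\Phi\overline{K}\to K/\nlp_3 K$ is vacuous since $Sq_0$ is assumed zero. With these observations filled in, the argument is correct.
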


Note that the  non-realization result of Gaudens and Schwartz  \cite{GS12,CGPS} 
does 
not apply directly here, since no restriction is placed upon the higher 
nilpotent filtration of $QK$.

\begin{rem}
The hypothesis that $\rho_1 QK$ is a finitely generated unstable module is 
essential. For example, consider $H^* (X)$ for  $X=SU$, so that $\Omega X 
\simeq 
 BU$. 
\end{rem}

 The main results of the paper are  Theorem \ref{thm:main} (for $n>1$) and 
Theorem \ref{thm:main_n=1} (for $n=1$). For these, the prime is taken to be 
two; 
the modifications necessary in the odd primary case are indicated in Section 
\ref{sect:podd}.

To give an idea of the flavour of the results, consider the
following:
 
 \begin{THM}
  For $1 \leq n \in \nat$ and $X$ an $n$-connected space such that $H^* (X)$ is 
of finite type and $\rho_n H^* (X)$ is finitely generated and non-trivial, 
  $$H^* (X) / \nlp_{n+2} H^* (X) $$
  is not the $n$-fold suspension of an unstable module.
 \end{THM}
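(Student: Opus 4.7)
Write $M := H^*(X)$ and $\bar M := M/\nlp_{n+2} M$. The hypotheses place $M$ in $\nil_n \setminus \nil_{n+1}$; setting $R' := \rho_n M$ (non-zero and finitely generated by assumption) and $R := \rho_{n+1}(\nlp_{n+1} M)$ (reduced), the nilpotent filtration of $\bar M$ produces a short exact sequence
\[
0 \to \Sigma^{n+1} R \to \bar M \to \Sigma^n R' \to 0,
\]
classified by an element $\eta \in \ext^1_{\unst}(\Sigma^n R', \Sigma^{n+1} R)$. My plan is to argue by contradiction: assume $\bar M \cong \Sigma^n N$ for some $N \in \unst$. Since $\Sigma^n$ is fully faithful and both outer terms of the sequence lie in its essential image (with $\Sigma^{n+1}R = \Sigma^n(\Sigma R)$), applying a left adjoint to $\Sigma^n$ descends the sequence to a short exact sequence $0 \to \Sigma R \to N \to R' \to 0$ in $\unst$; equivalently, $\eta$ lies in the image of the suspension map
\[
\Sigma^n \colon \ext^1_{\unst}(R', \Sigma R) \to \ext^1_{\unst}(\Sigma^n R', \Sigma^{n+1} R).
\]

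The contradiction will come from Theorem \ref{thm:main} (or Theorem \ref{thm:main_n=1} when $n=1$). Under the present hypotheses on $X$, the analysis of the first two non-trivial columns of the Arone--Goodwillie spectral sequence for $\Sigma^\infty \Omega^n X$ extracts from $\eta$ a non-trivial comparison between $R'$ and $R$ in the quotient category $\unst/\nil$; its non-vanishing relies on Theorem \ref{thm:split_mono_gen} and consumes precisely the finite-generation hypothesis on $R'$. This comparison is, by construction, the image of $\eta$ under a natural projection whose kernel contains the image of the suspension map above: classes genuinely obtained by $n$-fold suspension lack the low-dimensional Steenrod datum that the first non-trivial differential records. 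This contradicts the conclusion of the previous paragraph.

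The delicate step, and the main obstacle, is the functorial identification of the Arone--Goodwillie comparison class as a projection of $\eta$ onto a quotient of $\ext^1_{\unst}(\Sigma^n R', \Sigma^{n+1} R)$ in which the suspension image vanishes. This lies at the technical heart of Theorems \ref{thm:main} and \ref{thm:main_n=1}, building on the non-vanishing input provided by Theorem \ref{thm:split_mono_gen}; the remaining content of the present statement is simply the observation that an $n$-fold suspension structure on $\bar M$ is equivalent to $\eta$ lying in the image of $\Sigma^n$ on $\ext^1$, which follows formally from the fully faithfulness of $\Sigma^n$ applied to the two outer terms.
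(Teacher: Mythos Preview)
Your reduction to Theorems \ref{thm:main} and \ref{thm:main_n=1} is correct---the non-suspension conclusion is literally a stated consequence of those results---but your $\ext^1$-framing mischaracterises how the paper actually gets there, and the step you call ``the main obstacle'' is not what those theorems prove.

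The paper does not extract an obstruction from the extension class $\eta$ at all. It works directly with the morphism
\[
\delta_n : \Phi \rho_n H^*(X) \longrightarrow \rho_{n+1} H^*(X),
\]
which Proposition~\ref{prop:alg_d1_niln} identifies explicitly as the map induced by $x \mapsto Sq^{|x|+1}(x)$ on $\Sigma^{-n}\bar M$. That description makes the suspension vanishing immediate: if $\bar M = \Sigma^n N$, writing $x = \Sigma^{-n}(\Sigma^n z)$ gives $Sq^{|x|+1}(x) = Sq^{|z|+1}(z) = 0$ by instability. The main theorems then show $\delta_n \neq 0$ (indeed, $\ker\delta_n \in \unst_{d-1}$) via the obstruction class $\omega_X$ and Theorem~\ref{thm:split_mono_gen}. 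No projection from $\ext^1$ is needed, and the theorems do not contain the ``functorial identification'' you attribute to them.

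Your reformulation is not wrong, only roundabout: $\delta_n$ is indeed the connecting map for $\Omega^n_\bullet$ applied to the extension (cf.\ Corollary~\ref{cor:les_Omegan}), so it is a function of $\eta$ and vanishes when $\eta$ desuspends. But you never say this; you instead defer the link to the main theorems, which do not address it. If you want to keep the $\eta$-viewpoint, make that connecting-map identification explicit.

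Two smaller points. First, ``applying a left adjoint'' is the wrong mechanism, since $\Omega^n$ is not exact; the descent of the short exact sequence follows from full faithfulness of $\Sigma^n$ (which you also invoke) together with the fact that a fully faithful exact functor reflects epimorphisms and kernels. Second, your opening sentence asserts $M \in \nil_n$, which is not in the stated hypotheses; you should observe that if $H^*(X) \notin \nil_n$ then $\bar M \notin \nil_n$ and hence cannot be an $n$-fold suspension, so one may assume $H^*(X) \in \nil_n$ without loss.
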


At first sight,  this result may not appear surprising:  the structure theory 
of unstable 
algebras  (modulo nilpotents) implies that, under the given hypotheses, $\rho_n 
H^* (X)$ 
cannot be finitely generated if $X$ is the $n$-fold 
suspension of a connected space. The theorem shows that this is already 
exhibited algebraically by the  structure of $H^* (X) / \nlp_{n+2} H^* (X) $. 

This is a fundamental point: in the spectral sequence the columns will not in 
general be  unstable modules. For $n=1$, this is not a 
serious difficulty, since it is known that the columns of the $E_2$-term  of 
the 
Eilenberg-Moore spectral sequence are 
unstable. For the general case, this is no longer true, yet the spectral  
sequence converges to an unstable 
module. To allow the nilpotent filtration of unstable modules to be brought to 
bear, the notion of an {\em almost} unstable module 
is introduced here, which is shown to be sufficient to  cover the cases of 
interest.

Theorem \ref{thm:main} and Theorem \ref{thm:main_n=1} are much more precise, 
relating 
$\rho_n H^* (X)$ and  the next layer of the nilpotent filtration, $\rho_{n+1} 
H^* (X)$. Roughly speaking, for $n >1$ the result states that 
$\rho_{n+1}H^* (X)$ is at least as large as $\rho_n H^* (X)$; this is exhibited
by the injectivity  (modulo smaller objects relative to the Krull 
filtration of $\unst$) of the natural
transformation
\[
 \Phi \rho_n H^* (X) \rightarrow \rho_{n+1} H^* (X)
\]
that arises from the non-exactness of the iterated loop functor 
$\Omega^n : \unst \rightarrow \unst$.

For $n=1$, the  result is stronger, here the relevant transformation is
\[
 S^2 (\rho_1 H^* (X)) \rightarrow \rho_2 H^* (X),
\]
induced by the cup product of $H^* (X)$. The approach is unified here, showing 
how
the two cases are related. 
 
 This gives information on the beginning of the 
nilpotent filtration, whereas the proofs of the known cases of Kuhn's 
non-realization conjectures 
give global information. Where applicable, Lannes' mapping space technology 
can be used to study the higher parts of the nilpotent filtration; 
the ideas
involved  will be transparent to the experts and to the readers of \cite{CGPS} 
and are not developed here.

\bigskip
{\bf Organization of the paper:}
 Background is surveyed in Section \ref{sect:alg_prelim}; readers should
consult this as and when is necessary. 
 The technical notion of an almost unstable module is introduced in Section
\ref{sect:almost}; this is necessary to be able to control the 
 image of differentials in the spectral sequence, as is explained in Section
\ref{sect:auss}. The spectral sequence derived from 
 the Arone-Goodwillie tower is reviewed in Section \ref{sect:tower} and the
calculational input is provided in Section \ref{sect:cohomEP},
 namely the calculation of the $E_1$-term of the spectral sequence via the
cohomology of extended powers. In particular, it is shown that, for the case 
 at hand, the columns of the $E_1$-term are almost unstable. The case of the
second extended power admits an explicit algebraic model, as explained in
Section \ref{sect:alg_model}, which not only makes the results of the previous 
section more explicit in this case, but also provides a model for the 
differential $d_1$. The input from homological algebra is explained in 
Section \ref{sect:essential},
 refining a theorem of Kuhn; this is the key ingredient in the proofs of the 
 two main theorems, which are given in Section \ref{sect:main}.  Section 
\ref{sect:podd}
  sketches the modifications required in the odd primary case.

\section{Algebraic preliminaries}
\label{sect:alg_prelim}

This section reviews background, referring to the literature (in
particular \cite{schwartz_book} and \cite{K14}) for details. 
As usual, $\unst$ denotes the full subcategory of unstable modules in  $\amod$,
the category of graded modules over the mod $p$ Steenrod algebra 
$\cala$. The inclusion $\unst \rightarrow \amod$ has left adjoint $\destab 
: \amod \rightarrow \unst$, the destabilization functor. 

The suspension functor $\Sigma : \amod \rightarrow \amod$ restricts to $\Sigma
: 
\unst \rightarrow \unst$ and the iterated suspension functor $\Sigma^t : \unst 
\rightarrow \unst$ ($t \in \nat$) 
 has left adjoint the iterated loop functor $\Omega^t : \unst \rightarrow
\unst$, 
which identifies with the composite functor $\destab \Sigma^{-t}$ restricted to 
$\unst$.

The category
of unstable algebras is denoted $\unstalg$
and the Massey-Peterson enveloping algebra  $U : \unst \rightarrow \unstalg$ 
is 
the
left adjoint to the forgetful 
functor $\unstalg \rightarrow \unst$;  $U$ takes values in the
category $\unstalgaug$ of augmented unstable algebras.  The indecomposables 
functor $Q: \unstalgaug \rightarrow \unst$ is given 
explicitly by $Q K
:= \overline{K} / \overline{K}^2$, 
where $\overline{K}$ is the augmentation ideal.

\subsection{The nilpotent filtration}

The category of unstable modules $\unst$ has nilpotent filtration:
\[
 \ldots \subset \nil_{i+1} \subset \nil_i \subset \ldots \subset \nil_1 \subset 
\nil_0 = \unst,
\]
where $\nil_s$ is the smallest localizing subcategory containing all 
$s$-fold suspensions \cite{schwartz_book,K14}. In particular
$\nil_1$ is the  subcategory of nilpotent unstable modules  $\nil$.

The inclusion $\nil_s \hookrightarrow \unst$ admits a right adjoint $\nlp_s : 
\unst \rightarrow \nil_s \subset \unst$ so that an unstable module $M$ 
has a natural, convergent decreasing filtration:
\[
 \ldots \subset \nlp_{s+1} M \subset \nlp_s M \subset \ldots \subset \nlp_0 M = 
M
\]
and, for $s \in \nat$,  $\nlp_s M /\nlp_{s+1} M \cong \Sigma^s  \rho_sM$, where 
$\rho_s M$ is a 
reduced unstable module\footnote{The notation $\rho_s$ is used to avoid 
possible confusion with the Singer 
functors.}. (An unstable module is reduced if it contains no 
non-trivial suspension.)

\subsection{Functors between $\field$-vector spaces}
\label{subsect:F}

Functors on vector spaces over a finite field $\field$ arise naturally in the 
study of unstable 
modules via   Lannes' $T$-functor \cite{schwartz_book}.

\begin{nota}
 For $\field$ a finite field, let $\f$ denote the category of functors from
finite-dimensional $\field$-vector spaces to $\field$-vector
spaces and $\f_\omega \subset \f$ the full subcategory of locally finite (or
analytic) functors. 
\end{nota}

The category $\f$ is tensor abelian with enough projectives and injectives. 
A functor is finite if it has a finite composition series and is locally finite 
if it is the colimit of its finite subobjects. 

A  functor $F$ is polynomial of degree $d$ if $\Delta
^{d+1} F=0$, where $\Delta:\f
\rightarrow \f$ is the difference functor defined by 
$\Delta F(V):= F(V \oplus \field)/ F(V)$.

Over the prime field $\field_p$, the quotient category $\unst / \nil$ is
equivalent to $\f_\omega$  and the localization
functor $\unst \rightarrow \unst /\nil$ gives the exact functor $l : \unst
\rightarrow \f$ which can be identified in terms of Lannes' 
$T$-functor as $M \mapsto \{V \mapsto (T_V M )^0 \}$ \cite{schwartz_book}.

\begin{nota}
For $d \in \nat$, denote by $\f_d$ the full subcategory of $\f$ of functors of
polynomial degree $d$.
\end{nota}

\subsection{Examples of polynomial functors}
\label{subsect:basic_functors}

A number of polynomial functors arise here, which are closely
related to the $n$th tensor power functor 
$T^n : V \mapsto V^{\otimes n}$. The symmetric group $\symm_n$ acts naturally by
place permutations on $T^n$, 
giving:
\begin{enumerate}
 \item 
 the $n$th divided power $\Gamma^n := (T^n)^{\symm_n}$;
 \item 
 the $n$th symmetric power $S^n:= (T^n)/\symm_n$.
\end{enumerate}
These functors are dual under the Kuhn duality functor $D : \f\op \rightarrow
\f$, given by $DF (V):= F(V^*) ^*$ (see \cite{KI}).

Similarly, there is the $n$th exterior power functors $\Lambda ^n$, which  is
self-dual. The functors $S^1, \Lambda^1 , \Gamma^1$ coincide  with $\id : V 
\mapsto V$. 

The Frobenius $p$th power map induces a natural transformation $S^n \rightarrow
S^{np}$. Henceforth taking  $p=2$,  there is a non-split short exact sequence 
\begin{eqnarray}
\label{eqn:frob_ses}
 0 \rightarrow \id \rightarrow S^2 \rightarrow \Lambda^2 \rightarrow 0,
\end{eqnarray}
representing a non-zero class  $\phi \in \ext^1_\f (\Lambda^2 , \id)$. 

The composite of $S^2 \twoheadrightarrow \Lambda^2$ with its dual is the norm
map $S^2 \rightarrow \Gamma^2$; this occurs in the top row of the following
pullback diagram of exact sequences:
\begin{eqnarray}
 \label{eqn:ext2_pullback_diag}
 \xymatrix{
 0
 \ar[r]
 &
 \id 
 \ar[r]
 \ar@{=}[d]
 &
 S^2 
 \ar[r]
 \ar@{=}[d]
 &
 \Gamma^2 
 \ar@{^(->}[d]
 \ar[r]
 &
 \id 
 \ar@{^(->}[d]
 \ar[r]
 &
 0\\
 0
 \ar[r]
 &
 \id 
 \ar[r]
 &
 S^2 
 \ar[r]
&
T^2 
\ar[r]
&
S^2 
\ar[r]
&
0.
 }
\end{eqnarray}

The rows represent non-zero classes in $\ext^2 _\f$:
\[
 \tilde{e}_1 \in \ext^2_\f (S^2, \id) \mapsto e_1 \in \ext^2 _\f(\id, \id).
\]
These classes appear for example in \cite{FLS}.

\subsection{The Krull filtration}

The category $\unst$ of unstable modules has Krull filtration:
\[
 \unst_0 \subset \unst_1 \subset \ldots \subset \unst_n \subset \ldots \subset
\unst
\]
(see \cite{schwartz_book,K14});  $\unst_0$ identifies as  the full subcategory 
of
locally finite modules. For current purposes, the following is the key result:

\begin{prop}
\label{prop:krull_poly}
\cite{schwartz_book}
For $d \in \nat$, the functor $l : \unst \rightarrow \f$ restricts to 
 \[
  l : \unst_d \rightarrow \f_d.
 \]
Moreover, if $M$ is a reduced unstable module, then $M \in \obj \unst_d$ if and
only if $f(M)$ has polynomial degree $d$.
\end{prop}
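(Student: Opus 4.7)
The plan is to induct on $d$, exploiting three structural facts: that $l$ is exact and commutes with filtered colimits, that $l$ annihilates $\nil$, and that $l$ factors through the equivalence $\unst/\nil \simeq \f_\omega$. The two halves of the statement (the containment and the reduced converse) are handled separately, since the converse is precisely what fails without the reduced hypothesis.

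For the containment $l(\unst_d) \subseteq \f_d$, I start with $d=0$. Since $l$ is exact and kills the nilpotent radical of any module, it suffices to treat reduced locally finite modules. A finite-dimensional reduced unstable module must be concentrated in degree zero: if $M^n\neq 0$ for some $n\geq 1$, iterating injectivity of $Sq_0:M^k\to M^{2k}$ (the defining property of reducedness) produces non-zero classes in degrees $n,2n,4n,\ldots$, contradicting finite-dimensionality. Such modules are trivial $\cala$-modules in degree zero, whose image under $l$ is a constant functor, polynomial of degree $0$. A general locally finite module is a filtered colimit of finite submodules, and cocontinuity of $l$ preserves $\f_0$. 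For the inductive step, membership in $\unst_d$ is the condition that the image in the Serre quotient $\unst/\unst_{d-1}$ is locally finite; combining exactness of $l$, the induction hypothesis, and the parallel recursive description of $\f_d$ as the preimage of the locally finite subcategory of $\f/\f_{d-1}$, I would deduce $l(M)\in\f_d$.

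For the converse, fix a reduced $M$ with $l(M)\in\f_d$, equivalently $\Delta^{d+1}l(M)=0$. The key inputs are that $l$ is faithful on reduced modules (the localization $\unst\to\unst/\nil\simeq\f_\omega$ admits a section by the reduced replacement), and that the difference functor $\Delta$ on $\f$ corresponds, after restriction to the image of the reduced part, to the reduced summand of Lannes' $T_{\field_2}$-functor. Translating $\Delta^{d+1}l(M)=0$ across this correspondence yields a finite filtration of $M$ whose reduced layers have image of strictly smaller polynomial degree; applying the induction hypothesis places those layers in $\unst_{d-1}$, and then the recursive definition of the Krull filtration gives $M\in\unst_d$.

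The principal obstacle is the converse: translating polynomial vanishing in the functor category back into a Krull-filtration statement inside $\unst$ is delicate, because $l$ forgets all information about the nilpotent part of $M$ and because the Krull filtration is only defined iteratively through Serre quotients. The argument rests on faithfulness of $l$ on reduced modules together with the compatibility of $\Delta$ with the reduced part of $T_{\field_2}$; once these are established, the induction is routine and both directions come out symmetrically.
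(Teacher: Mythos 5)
The paper does not prove this proposition; it is quoted verbatim from Schwartz's book, so there is no in-paper argument to compare against. Evaluating your proposal on its own merits, there is a genuine gap in the inductive step of the containment direction. You appeal to a ``parallel recursive description of $\f_d$ as the preimage of the locally finite subcategory of $\f/\f_{d-1}$,'' but this characterization is false. For instance, the tensor power $T^{d+1}$ is a finite functor of polynomial degree exactly $d+1$; its image in $\f/\f_{d-1}$ has a finite composition series (hence is locally finite there), yet $T^{d+1}\notin\f_d$. What you are implicitly describing is the Krull filtration of the category $\f$ itself, which does not coincide with the polynomial filtration $(\f_d)_d$: the polynomial filtration lives entirely inside $\mathcal{K}_0(\f)=\f_\omega$. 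The parallelism you want is real, but it runs through the reduced $T$-functor $\bar{T}$, not through Serre quotients of $\f$.

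Your sketch of the converse has the right ingredients but the wrong mechanism. You say the vanishing $\Delta^{d+1}l(M)=0$ ``yields a finite filtration of $M$ whose reduced layers have image of strictly smaller polynomial degree''; no such filtration is produced by the hypothesis, and this step is not explained. The standard argument (Schwartz, and spelled out in Kuhn's Krull filtration paper) uses two facts you only gesture at: first, the natural isomorphism $l\circ\bar{T}\cong\Delta\circ l$, together with the observation that $\bar{T}$ preserves reduced modules; second, the characterization $M\in\unst_d$ if and only if $\bar{T}^{d+1}M$ is nilpotent. With these two facts both directions of the proposition fall out at once, with no induction and no filtration of $M$: for any $M$, $M\in\unst_d$ forces $\bar{T}^{d+1}M$ nilpotent, hence $\Delta^{d+1}l(M)=l(\bar{T}^{d+1}M)=0$; conversely, for $M$ reduced, $\bar{T}^{d+1}M$ is reduced, so $\Delta^{d+1}l(M)=0$ forces $\bar{T}^{d+1}M=0$ and therefore $M\in\unst_d$. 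Your base case $d=0$ is correct and your remark that $l$ is faithful on reduced modules is correct, but the core bridging fact -- the $\bar{T}$-criterion for membership in $\unst_d$ -- is absent, and the attempt to substitute a recursive description of $\f_d$ in its place does not work.
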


\subsection{Functors on $\cala$-modules}

The categories $\amod$ and $\unst$ are tensor abelian; in particular, for $n \in
\nat$, the $n$th tensor functor 
$T^n : \amod \rightarrow \amod$, $M \mapsto M^{\otimes n}$ is defined, which
restricts to $T^n : \unst \rightarrow \unst$. 
Again,   $\symm_n$ acts naturally by place permutations on $T^n$, 
giving the $n$th symmetric invariants 
$\Gamma^n := (T^n)^{\symm_n}$  and the $n$th symmetric
coinvariants $S^n:= (T^n)/\symm_n$. The functors $\Gamma^n, S^n : \amod 
\rightarrow \amod$ restrict to
$\Gamma^n, S^n : \unst \rightarrow \unst$.
Similarly, the exterior power functor $\Lambda ^n : \amod \rightarrow
\amod$ restricts to $\Lambda^n : \unst \rightarrow \unst$. 

For the remainder of the section, the prime $p$ is taken to be $2$. 
Thus, the Frobenius functor $\Phi : \amod \rightarrow \amod$  \cite[Section
1.7]{schwartz_book} is the usual degree-doubling functor.

\begin{lem}
 \label{lem:Phi}
 For $p=2$, the Frobenius functor $\Phi : \amod \rightarrow \amod$  is exact,
commutes with tensor products and 
 there is a natural isomorphism 
 \[
  \Phi \Sigma \cong \Sigma^2 \Phi.
 \]
Moreover, $\Phi$ restricts to $\Phi : \unst \rightarrow \unst$ 
and, if $M$ is a reduced unstable module, $\Phi M$ is reduced.

For $i \in \nat$, $\Phi :\unst \rightarrow \unst$ restricts to:
$
 \Phi : \nil_i \rightarrow \nil_{2i}.
$
\end{lem}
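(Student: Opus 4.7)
The lemma collects standard properties of the Frobenius functor at the prime $2$, so the plan is essentially bookkeeping from the degree-doubling definition and its compatibility with the Steenrod action. Recall that $\Phi M$ has $(\Phi M)^{2n}=M^n$ and $(\Phi M)^{2n+1}=0$, with $Sq^{2i}$ on $\Phi M$ corresponding to $Sq^i$ on $M$ and odd squares acting trivially; this is the form in which every statement of the lemma becomes transparent.

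Exactness is immediate, since a morphism $f:M\to N$ in $\amod$ induces $\Phi f$ by relabelling degrees, so $\ker$ and $\mathrm{coker}$ are computed componentwise and the functor commutes with arbitrary colimits and products. For the tensor compatibility, I would define the natural map $\Phi M\otimes \Phi N\to\Phi(M\otimes N)$ degreewise in the evident way and verify that it respects the Steenrod action by applying the Cartan formula: the only terms that survive on $\Phi(M\otimes N)$ are those involving even squares on each factor, and this matches exactly the Cartan formula on $\Phi M\otimes \Phi N$. The natural isomorphism $\Phi\Sigma\cong\Sigma^{2}\Phi$ is then verified by comparing both sides in degree $2k$: each is canonically identified with $M^{k-1}$, and on each side $Sq^{2i}$ pulls back to $Sq^{i}$ on $M$, so the isomorphism is $\cala$-linear.

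To check that $\Phi$ restricts to $\unst$, pick $y\in(\Phi M)^{2k}$ corresponding to $x\in M^k$ and observe $Sq^{2j}y$ corresponds to $Sq^{j}x$, which vanishes for $j>k$ by instability of $M$; combined with the vanishing of odd squares, this gives $Sq^{i}y=0$ for $i>2k$, so $\Phi M\in\unst$. Preservation of reducedness is best handled through the characterization that a reduced unstable module is one on which $Sq_0:M^n\to M^{2n}$, $x\mapsto Sq^{n}x$, is injective for every $n$: the above computation shows that $Sq_0$ on $\Phi M$ in degree $2k$ is literally $Sq_0$ on $M$ in degree $k$ (up to the identifying bijection), so injectivity transfers.

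For the final statement about the nilpotent filtration, the iterated natural isomorphism gives $\Phi\Sigma^{i}N\cong\Sigma^{2i}\Phi N\in\nil_{2i}$ for every $N\in\unst$, so $\Phi$ sends the generators of $\nil_i$ into $\nil_{2i}$. Since $\nil_{2i}$ is a localizing subcategory of $\unst$ (stable under subquotients, extensions, and arbitrary colimits), and since $\Phi$ is exact and commutes with colimits by the first part, the class of objects sent by $\Phi$ into $\nil_{2i}$ is itself localizing; containing the generators of $\nil_i$, it contains all of $\nil_i$. The only delicate point I anticipate is checking the Cartan-formula compatibility for the tensor claim carefully enough to make sure no odd-square cross-terms are overlooked; everything else reduces to a direct comparison in each internal degree.
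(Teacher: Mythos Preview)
The paper does not supply a proof for this lemma: it is stated as a standard result, with the definition of $\Phi$ attributed to \cite[Section~1.7]{schwartz_book}. Your verification is correct and fills in precisely the details one would expect; the Cartan-formula check for the tensor compatibility and the localizing-subcategory argument for $\Phi:\nil_i\to\nil_{2i}$ are both sound. The only point worth flagging is that the characterization ``reduced $\Leftrightarrow$ $Sq_0$ injective'' you invoke is itself a (standard) lemma rather than the definition used in the paper, so in a fully self-contained write-up you would want to cite \cite{schwartz_book} for that equivalence as well.
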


The Frobenius short exact sequence (\ref{eqn:frob_ses}) and its dual have
analogues in $\amod$:

\begin{lem}
\label{lem:U_ses_Frobenius}
For $p=2$ and $M \in \obj \amod$, there are natural short exact sequences 
\begin{eqnarray*}
&& 0 \rightarrow \Lambda^2 M \rightarrow \Gamma^2 M \rightarrow \Phi M
\rightarrow 0 
\\
&& 0 \rightarrow \Phi M  \rightarrow S^2 M \rightarrow \Lambda^2 M\rightarrow 0 
.
\end{eqnarray*}
\end{lem}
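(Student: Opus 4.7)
The plan is to derive both short exact sequences by analysing the $\symm_2$-equivariant structure on $T^2 M$ at $p = 2$, where $\symm_2$ acts by the swap $\sigma$. Since the Cartan formula makes $\sigma$ an $\cala$-linear involution, the norm $N : T^2 M \to T^2 M$, $z \mapsto z + \sigma z$, is $\cala$-linear with $N^2 = 0$ and $\ker N = \Gamma^2 M$; it therefore induces an $\cala$-linear map $\bar N : S^2 M \to \Gamma^2 M$. Both sequences will fall out from analysing $\bar N$.

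For the second sequence, construct $\iota : \Phi M \to S^2 M$ by $\Phi x \mapsto [x \otimes x]$. Its $\cala$-linearity is checked via the Cartan formula: in the expansion
\[
Sq^n[x \otimes x] \;=\; \Bigl[\sum_{i + j = n} Sq^i x \otimes Sq^j x\Bigr],
\]
the off-diagonal pairs $[Sq^i x \otimes Sq^j x] + [Sq^j x \otimes Sq^i x] = 2 [Sq^i x \otimes Sq^j x]$ vanish at $p = 2$, leaving only $[Sq^m x \otimes Sq^m x]$ when $n = 2m$ and zero when $n$ is odd, which matches the action on $\Phi M$ recorded in Lemma \ref{lem:Phi}. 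Injectivity of $\iota$ is visible from any basis of $M$. The cokernel is $\Lambda^2 M$: by definition, $\Lambda^2 M$ is the quotient of $T^2 M$ by the submodule generated by the diagonal elements $\{x \otimes x\}$, and this submodule projects in $S^2 M$ onto $\iota(\Phi M)$, using the identity $x \otimes y + y \otimes x = (x + y)^{\otimes 2} - x^{\otimes 2} - y^{\otimes 2}$ to see that every symmetrised off-diagonal element already lies in the diagonal submodule.

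For the first sequence, the kernel of $\bar N$ consists of those $[z] \in S^2 M$ for which $z \in \Gamma^2 M$, so it equals the image of the composite $\pi : \Gamma^2 M \hookrightarrow T^2 M \twoheadrightarrow S^2 M$. Evaluating $\pi$ on the two types of generators of $\Gamma^2 M$ yields $x \otimes x \mapsto [x \otimes x]$ and $x \otimes y + y \otimes x \mapsto 2 [x \otimes y] = 0$, so the image of $\pi$ coincides with $\iota(\Phi M) \cong \Phi M$. Hence $\bar N$ factors as an injection $\Lambda^2 M = S^2 M / \Phi M \hookrightarrow \Gamma^2 M$, and $\pi$ exhibits $\Gamma^2 M \twoheadrightarrow \Phi M$ with kernel the image of $\bar N$. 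The main obstacle is the $\cala$-linearity verification for $\iota$; once that is in place, the two sequences are simply the lift to $\amod$ of the top row of the pullback diagram (\ref{eqn:ext2_pullback_diag}), via the correspondence between the identity functor on $\f$ and the Frobenius $\Phi$ on $\amod$ at $p = 2$.
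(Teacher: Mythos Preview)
The paper states this lemma without proof, treating it as a standard fact about the Frobenius and the symmetric/divided/exterior power functors at $p=2$; there is therefore nothing to compare against. Your argument is correct and is precisely the standard verification: the Cartan computation showing that $\Phi x \mapsto [x\otimes x]$ is $\cala$-linear is the only nontrivial point, and you have carried it out correctly, while the identification of kernels and images via the norm map $\bar N$ and the composite $\pi : \Gamma^2 M \hookrightarrow T^2 M \twoheadrightarrow S^2 M$ is exactly how one lifts the bottom and top rows of diagram~(\ref{eqn:ext2_pullback_diag}) from $\f$ to $\amod$. One small remark: Lemma~\ref{lem:Phi} as stated in the paper does not actually record the formula $Sq^n (\Phi x) = \Phi(Sq^{n/2} x)$ for $n$ even and $0$ for $n$ odd; this is the definition of $\Phi$ from \cite[Section~1.7]{schwartz_book}, so you may want to cite that directly rather than Lemma~\ref{lem:Phi}.
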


\subsection{The Singer functors}
(In this section, to simplify presentation, $p$ is taken to be $2$.) 
The Singer functor $R_1$  was introduced for unstable modules in the form used 
here by Lannes and Zarati \cite{LZ}. For $M$ an unstable module, $R_1 M$ is the 
sub $\field [u]$-module of $\field [u] \otimes M$ generated by the image of the 
total Steenrod square 
$\mathrm{St}_1 (x) := \sum_i u^{|x|-i} \otimes Sq^i x \in \field [u] \otimes M$,
 for $x \in M$. A key fact is that $R_1 M$ is stable under the action of 
$\cala$ on $\field [u] \otimes M$.

The extension to all $\cala$-modules requires $\field [u] \otimes M$  to be 
replaced by a half-completed tensor product, 
since the sum in $\mathrm{St}_1 (x)$ is no longer finite in general. This is 
reviewed in \cite{p_viasm} and details 
are given (for odd primes) in \cite{p_destab}.

\begin{rem}
This Singer functor  must not be confused with the stabilized
version which occurs in \cite[Chapter II, Section 
5]{BMMS}, following 
ideas of Miller. For the current situation, compare the treatment (in 
homology) in \cite{KMcC}.
\end{rem}

\begin{nota}
Write $\field [u]$ for the unstable algebra generated by $u$ of degree $1$ and
$\field [u]\dash \amod $ for the category of $\field[u]$-modules in $\amod$
(respectively $\field[u]\dash \unst$ for 
$\field[u]$-modules in $\unst$). 
\end{nota}

\begin{lem}
The categories $\field[u]\dash \amod$ and $\field [u]\dash \unst$ are abelian
and there are exact forgetful functors
$  \field[u]\dash \amod \rightarrow  \field[u]\dash \unst$, \ \ 
 $\field[u]\dash \amod \rightarrow \amod$ and 
 $\field[u]\dash \unst \rightarrow \unst.$ 
\end{lem}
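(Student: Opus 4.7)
The plan is to apply the general principle that, in a symmetric monoidal abelian category $(\mathcal{B},\otimes)$ for which $B \otimes(-)$ is exact for every object $B$, the category $R\dash \mathcal{B}$ of modules over a commutative algebra object $R \in \mathcal{B}$ is again abelian, with kernels and cokernels computed in $\mathcal{B}$; in this situation the forgetful functor $R\dash \mathcal{B} \to \mathcal{B}$ is automatically exact. I would invoke this with $\mathcal{B} = \amod$, respectively $\mathcal{B} = \unst$, and $R = \field[u]$; this is legitimate because $\field[u] \in \obj \unstalg$, and $\unstalg$ embeds in $\unst$ and thence in $\amod$, so that $\field[u]$ is a commutative algebra object in both tensor abelian categories.

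The hypotheses to verify are mild. The tensor product on $\amod$ is biexact, since $\otimes_\field$ is and the diagonal $\cala$-action is natural, and the full subcategory $\unst \subset \amod$ is closed under tensor products and under the kernels and cokernels formed in $\amod$, so the induced tensor product on $\unst$ is also biexact. Given a morphism $f : M \to N$ in $\field[u]\dash \amod$, one then forms the kernel and cokernel in $\amod$ and checks that they inherit canonical $\field[u]$-actions: well-definedness relies on exactness of $\field[u] \otimes (-)$, which allows the actions on $M$ and $N$ to factor through the respective subquotients. A routine diagram chase shows that these subquotients realise the kernel and cokernel of $f$ in $\field[u]\dash \amod$. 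The verbatim argument, carried out inside $\unst$, yields that $\field[u]\dash \unst$ is abelian.

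Exactness of the three forgetful functors is then immediate: by construction, a sequence in $\field[u]\dash \amod$ (respectively $\field[u]\dash \unst$) is exact precisely when its image in $\amod$ (respectively $\unst$) is, and the forgetful functor between the two $\field[u]$-module categories reduces, upon composition with the exact forgetful functors down to $\amod$, to the exact inclusion $\unst \hookrightarrow \amod$. The entire statement is thus a formal consequence of the tensor-abelian structure on $\amod$ and $\unst$; the only point requiring any care is the stability of $\unst$ under $\field[u] \otimes (-)$, and no significant obstacle arises.
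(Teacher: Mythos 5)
The paper states this lemma without proof, treating it as routine, so there is no in-text argument to compare against; you are supplying the omitted verification. Your argument is correct and is the standard one. The key observations — that $\field[u]$ is a commutative algebra object in $\unstalg \subset \unst \subset \amod$, that the tensor product on $\amod$ (and hence on the Serre subcategory $\unst$, which is closed under $\otimes$ and under subobjects and quotients in $\amod$) is biexact because it is induced from $\otimes_\field$, and that this forces kernels and cokernels of $\field[u]$-module maps formed in the underlying category to inherit unique $\field[u]$-module structures — are exactly what is needed, and the exactness of the three forgetful functors then follows tautologically from how limits and colimits are computed. One minor remark: for the general module-category principle, only right exactness of $R\otimes(-)$ is required to equip cokernels with a module structure, while kernels acquire one with no exactness hypothesis at all (the action $R\otimes K \to M$ lands in $K$ by $R$-linearity of $f$); full biexactness is more than you need, though of course available here since one is tensoring over a field. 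Your flagging of the stability of $\unst$ under $\field[u]\otimes(-)$ as the one point needing care is well placed, and it holds because $\field[u]$ is itself unstable and $\unst$ is closed under $\otimes$.
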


\begin{prop}
\cite{LZ,p_destab,p_viasm}
\begin{enumerate}
 \item 
The functor 
$ R_1 : \amod \rightarrow \field [u] \dash \amod
 $ is exact and  restricts to an exact functor $R_1 : \unst \rightarrow 
\field[u]\dash 
\unst$. 
\item
There is a natural surjection $R_1 \twoheadrightarrow \Phi$ which, 
for $M \in \obj \amod$, fits into
a
short exact sequence:
 \[
  0 \rightarrow u R_1 M \rightarrow R_1 M \rightarrow \Phi M \rightarrow 0. 
 \]
In particular, the $u$-adic filtration of $R_1M $  has
filtration quotients of the form $\Sigma^j \Phi M$. 
\end{enumerate}
\end{prop}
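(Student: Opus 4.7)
The plan is to deduce both parts from an explicit additive identification of $R_1 M$. I would first observe that, writing $\mathrm{St}_1(x) = u^{|x|}\otimes x + \sum_{i \geq 1} u^{|x|-i}\otimes Sq^i x$ in $\field[u]\otimes M$, the leading (highest-degree-in-$u$) term is simply $u^{|x|}\otimes x$, since $Sq^0 = \id$. Choosing a homogeneous $\field$-basis $\{e_\alpha\}$ of $M$, it follows that the family $\{u^j \mathrm{St}_1(e_\alpha) : j \in \nat, \alpha\}$ is $\field$-linearly independent in $\field[u] \otimes M$: in any finite sum, the maximal $u$-power appearing singles out a unique pair $(j,\alpha)$. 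Since this family generates $R_1 M$ as a $\field$-vector space by the very definition of the Singer functor, there is a natural isomorphism of graded $\field[u]$-modules $\field[u] \otimes M \cong R_1 M$ sending $u^j \otimes x \mapsto u^j \mathrm{St}_1(x)$.

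Part (1) is then immediate. Since $\field[u]$ is $\field$-flat, this additive identification makes $R_1: \amod \to \amod$ visibly exact. For $R_1$ to take values in $\field[u]\dash\amod$, one must verify that $R_1 M$ is stable under the $\cala$-action on $\field[u] \otimes M$, and for the restriction to $\unst$ one must check instability; both follow by direct Cartan-formula calculations (as in the original sources \cite{LZ,p_destab,p_viasm}), using the known $\cala$-module structure on $\field[u]$ and the unstable hypothesis on $M$.

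For part (2), I would define $R_1 \twoheadrightarrow \Phi$ as the projection $R_1 M \to R_1 M / u R_1 M$ composed with an identification of this quotient with $\Phi M$. The additive description gives $R_1 M / u R_1 M$ a $\field$-basis $\{\mathrm{St}_1(e_\alpha) \bmod u R_1 M\}$ in bijection with the basis of $M$, with degrees $2|e_\alpha|$ matching $\Phi M$. The delicate point is matching the Steenrod action: one verifies
\[
 Sq^{2k} \mathrm{St}_1(x) \equiv \mathrm{St}_1(Sq^k x), \qquad Sq^{2k+1} \mathrm{St}_1(x) \equiv 0 \pmod{u R_1 M},
\]
via the Cartan formula applied inside $\field[u]\otimes M$, which recovers exactly the formulas defining the $\cala$-action on $\Phi M$. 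The short exact sequence $0 \to u R_1 M \to R_1 M \to \Phi M \to 0$ is then tautological.

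The final assertion on the $u$-adic filtration uses that multiplication by $u$ on $R_1 M$ is injective (transparent from the additive identification) and that $u$ has internal degree $1$, so
\[
 u^j R_1 M / u^{j+1} R_1 M \;\cong\; \Sigma^j \bigl(R_1 M / u R_1 M\bigr) \;\cong\; \Sigma^j \Phi M.
\]
The principal obstacle I anticipate is the Cartan-formula bookkeeping used to identify the $\cala$-action on $R_1 M / u R_1 M$ with that of $\Phi M$; all other steps are formal consequences of the additive description of $R_1 M$ as $\field[u] \otimes M$.
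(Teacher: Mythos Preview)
The paper does not give its own proof of this proposition; it is stated with the citations \cite{LZ,p_destab,p_viasm} and no argument. Your sketch is essentially the standard argument one finds in those references: the additive identification $R_1 M \cong \field[u]\otimes M$ via $u^j\otimes x \mapsto u^j\mathrm{St}_1(x)$, from which exactness and the $u$-adic description follow formally, with the $\cala$-action on the quotient checked by the Cartan formula.

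One point to tighten: you place $\mathrm{St}_1(x)$ inside $\field[u]\otimes M$, but for a general $M\in\amod$ the sum $\sum_{i\geq 0} u^{|x|-i}\otimes Sq^i x$ need not terminate, so the ambient object must be the half-completed tensor product mentioned in the paragraph preceding the proposition. Your leading-term argument survives this change unchanged (the top $u$-power is still $u^{|x|}$, and in any finite $\field$-linear combination of $u^j\mathrm{St}_1(e_\alpha)$'s the maximal pair $(j+|e_\alpha|,\alpha)$ is detected), so the free $\field[u]$-module identification and hence exactness go through. For $M$ unstable the sum is finite and everything lives in $\field[u]\otimes M$ as you wrote, which is the case treated in \cite{LZ}. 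With this correction your outline is sound and matches the approach of the cited sources.
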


\begin{rem}
Forgetting the $\field[u]$-module structure, the Singer functor can be 
considered as an exact functor 
$R_1 : \amod \rightarrow \amod$ which  restricts to $R_1 : \unst \rightarrow 
\unst$.
\end{rem}

\begin{nota}
\cite{p_viasm}
 For $n \in \nat$ and $M \in \obj \amod$, write:
 $$R_{1/n} M := (R_1 
M)\otimes_{\field[u]} \field[u]/(u^n).$$
 \end{nota}

\begin{lem}
\label{lem:R_truncated}
 For $1 \leq n \in \nat$, $R_{1/n}$ is an exact functor $R_{1/n} : \amod 
\rightarrow \amod$ which restricts to $R_{1/n} : \unst 
\rightarrow \unst$.

For $M \in \obj \amod$
\begin{enumerate}
 \item 
 for $n \geq 2$, there is a natural short exact
sequence 
 \[
  0 
  \rightarrow 
  \Sigma^{n-1} \Phi M \rightarrow R_{1/n}M \rightarrow R_{1/n-1} M   \rightarrow
0;
 \]
 \item 
 the $u$-adic filtration of $R_{1/n}M$ has filtration
quotients
 $
  \Sigma^i \Phi M$ ,  $0 \leq i < n$;
  \item 
  the natural surjection $R_1 M \twoheadrightarrow
\Phi
M$ factors across $R_1 M \twoheadrightarrow R_{1/n}M$ as 
 $$
 R_{1/n}M\twoheadrightarrow \Phi M,
$$
which is an isomorphism mod $\nil$.
\end{enumerate}
\end{lem}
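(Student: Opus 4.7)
The plan is to exploit the fact that $R_1 M$, constructed as a submodule of $\field[u] \otimes M$, is $u$-torsion free. All three assertions then reduce to careful bookkeeping with the $u$-adic filtration, using the short exact sequence $0 \to uR_1 M \to R_1 M \to \Phi M \to 0$ from the preceding proposition.

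First I would establish exactness. Given a short exact sequence $0 \to M' \to M \to M'' \to 0$ in $\amod$, exactness of $R_1$ yields a short exact sequence $0 \to R_1 M' \to R_1 M \to R_1 M'' \to 0$ of $\field[u]$-modules, each $u$-torsion free. Tensoring over $\field[u]$ with the cyclic module $\field[u]/(u^n)$, using the free resolution $0 \to \field[u] \xrightarrow{u^n} \field[u] \to \field[u]/(u^n) \to 0$, shows that $\mathrm{Tor}_1^{\field[u]}(-,\field[u]/(u^n))$ vanishes on $u$-torsion free modules; hence the functor $(-) \otimes_{\field[u]} \field[u]/(u^n)$ preserves the exactness. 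The restriction to $\unst$ is automatic since $R_1$ restricts and the quotient is computed objectwise.

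Next I would identify the filtration quotients. Since $R_1 M$ is $u$-torsion free, multiplication by $u^i$ is an isomorphism $R_1 M \xrightarrow{\sim} u^i R_1 M$ shifting internal degree by $i$, so
\[
 u^i R_1 M / u^{i+1} R_1 M \; \cong \; \Sigma^i (R_1 M / u R_1 M) \; \cong \; \Sigma^i \Phi M
\]
by the proposition. Applying $(-) / u^n R_1 M$ to the inclusion $u^{n-1} R_1 M \subset R_1 M$ gives the short exact sequence of (1):
\[
 0 \to u^{n-1} R_1 M / u^n R_1 M \to R_{1/n} M \to R_{1/n-1} M \to 0,
\]
whose kernel is $\Sigma^{n-1} \Phi M$ by the computation above. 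Iterating this, or equivalently taking the image of the $u$-adic filtration of $R_1 M$ in the quotient $R_{1/n} M$, proves (2).

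Finally, for (3), the canonical surjection $R_1 M \twoheadrightarrow \Phi M$ has kernel $uR_1 M$, which contains $u^n R_1 M$ for every $n \geq 1$; thus it factors uniquely as $R_1 M \twoheadrightarrow R_{1/n} M \twoheadrightarrow \Phi M$. The kernel of the second surjection is the image of $uR_1 M$ in $R_{1/n} M$, which by (2) is a finite iterated extension of $\Sigma^i \Phi M$ for $1 \leq i < n$. Every such suspension lies in $\nil_1$, and $\nil_1$ is a localizing (in particular, Serre) subcategory, so the kernel lies in $\nil$ and the surjection is an isomorphism modulo nilpotents. None of the steps look difficult; the only point requiring a brief justification is the $u$-torsion freeness of $R_1 M$, which is immediate from its definition as a submodule of $\field[u] \otimes M$ (or may be inferred from the $u$-adic filtration statement recalled above).
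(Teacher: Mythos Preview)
Your argument is correct and is exactly the natural unpacking of the lemma; the paper states the result without proof, treating it as an immediate consequence of the preceding proposition, and your write-up supplies precisely those routine details (exactness via $u$-torsion freeness and $\mathrm{Tor}$-vanishing, the $u$-adic filtration quotients, and the factorization through $\Phi M$). One small point: for general $M \in \amod$ the functor $R_1 M$ is defined as a submodule of the half-completed tensor product $\field[u^{\pm 1}]\,\largetensor\, M$ rather than of $\field[u]\otimes M$ (the paper notes this explicitly), so your justification of $u$-torsion freeness should appeal to that object in the general case --- but the conclusion is the same, since the half-completed tensor product is visibly $u$-torsion free.
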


\section{Almost unstable modules}
\label{sect:almost}

Suppose that $\field=\field_2$.  (The results of
this section have analogues for odd primes.)

\begin{defn}
\label{defn:alunst}
 An unstable module $M \in \obj\unst$ is almost unstable if it admits a finite 
filtration with subquotients of the form 
 $\Sigma^{-i} N$ for some $i \in \nat$ and $N \in \nil_i$.

The full subcategory of almost unstable modules in $\amod$ is denoted 
$\alunst$. 
\end{defn}

\begin{prop}
\label{prop:alunst}
 There are inclusions of subcategories $\unst \subset \alunst \subset \amod$
and 
$\alunst$ is 
 an abelian Serre subcategory. 
 
 Moreover, 
 \begin{enumerate}
  \item 
   $\alunst$ is closed under $\otimes$ and hence under $\Sigma$;
   \item 
   any almost unstable module is concentrated in non-negative degrees; 
   \item 
   \label{item:alunst_bounded}
   a module $M \in \obj \amod$ concentrated in non-negative degrees
and  bounded above is almost unstable.
   \end{enumerate}
\end{prop}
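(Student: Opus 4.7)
The plan is to verify each assertion directly from the definition, leveraging two standard facts: that $\unst$ is a Serre subcategory of $\amod$ (the instability relation $Sq^k x = 0$ for $k > |x|$ passes to subobjects and quotients in $\amod$), and that each $\nil_i$ is a localizing subcategory of $\unst$, hence closed under subobjects, quotients and extensions within $\unst$. The inclusions $\unst \subset \alunst \subset \amod$ are then immediate: any $M \in \unst$ carries the trivial one-step filtration with subquotient $M = \Sigma^0 M \in \nil_0$, while $\alunst \subset \amod$ is built into the definition.

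For the Serre property, fix $M \in \alunst$ with finite filtration $\{M_k\}$ whose subquotients satisfy $M_k / M_{k-1} \cong \Sigma^{-i_k} N_k$ with $N_k \in \nil_{i_k}$. A subobject $M' \hookrightarrow M$ inherits the induced filtration $M'_k := M' \cap M_k$, whose subquotients inject into $\Sigma^{-i_k} N_k$; after applying $\Sigma^{i_k}$ they inject into $N_k$, hence lie in $\unst$ (subobject of an unstable module) and therefore in $\nil_{i_k}$ (subobject in $\unst$ of an object of the localizing subcategory). Quotients are treated dually by taking images of the $M_k$, and extensions follow by concatenating filtrations. Thus $\alunst$ is a Serre subcategory of $\amod$, and in particular is abelian.

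For the closure statements in (1), the convolution of the two filtrations yields a finite filtration of $M \otimes M'$ whose subquotients have the form $\Sigma^{-i_k} N_k \otimes \Sigma^{-j_l} N'_l \cong \Sigma^{-(i_k + j_l)}(N_k \otimes N'_l)$, and one invokes the compatibility $\nil_i \otimes \nil_j \subset \nil_{i+j}$ to identify each as of the required shape. Closure under $\Sigma$ then follows from $\Sigma M \cong \Sigma \field \otimes M$ with $\Sigma \field \in \unst \subset \alunst$. For (2), any $N \in \nil_i$ vanishes in degrees below $i$ (the class of such modules contains the generators $\Sigma^i \unst$ and is closed under the operations defining the localizing closure), so each $\Sigma^{-i_k} N_k$ sits in non-negative degrees, and an induction along the filtration gives the claim for $M$. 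For (3), a module $M$ concentrated in degrees $[0, N]$ carries the finite descending filtration $M_{\geq k} := \bigoplus_{j \geq k} M^j$ by $\cala$-submodules, since positive Steenrod operations raise degree; each successive quotient $M^k$ is concentrated in a single degree $k$, so all positive Steenrod operations vanish on it, making it a direct sum of copies of $\Sigma^k \field \in \nil_k \subset \unst$, and in particular of the required form with $i = 0$.

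The only nontrivial input is the closure $\nil_i \otimes \nil_j \subset \nil_{i+j}$ needed in (1); this propagates from the generating identity $\Sigma^i A \otimes \Sigma^j B \cong \Sigma^{i+j}(A \otimes B)$ for $A, B \in \unst$ via bi-exactness of $\otimes$ and the closure axioms for localizing subcategories. Every other step reduces to routine manipulation of finite filtrations together with the exactness properties of the inclusion $\nil_i \hookrightarrow \unst \hookrightarrow \amod$.
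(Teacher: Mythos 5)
Your proof is correct and follows essentially the same route as the paper: induced and quotient filtrations for the Serre property, the compatibility $\nil_i \otimes \nil_j \subset \nil_{i+j}$ for tensor closure, and $\Sigma \cong \Sigma\field \otimes -$. The paper dismisses (2) and (3) as ``straightforward''; your explicit arguments (that $\nil_i$-objects vanish below degree $i$, and the degreewise filtration of a bounded module) are exactly the intended verifications.
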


\begin{proof}
It is clear that $\alunst$ contains $\unst$. To show that $\alunst$ is a Serre 
subcategory, it suffices to show that 
it is closed under formation of subobjects and quotients, since closure under 
extension is clear. 

For $M \in \obj \alunst$, write $f_i M $ for an increasing finite filtration 
that satisfies the defining property of Definition \ref{defn:alunst}. 
If $K \subset M$ is a submodule, consider the induced filtration
$f_i 
K := K \cap f_i M$. Then, by construction, 
$f_i K/f_{i-1}K \hookrightarrow f_i M/f_{i-1} M$. By hypothesis the right hand 
module is of the form $\Sigma^{-t} N$ for some $t \in \nat$ and $N \in \nil_t$;
 since $\nil_t$ is a Serre subcategory \cite{schwartz_book}, $f_iK $ is a 
filtration of the required form. 
 
 Similarly, for $M \twoheadrightarrow Q$, consider the 
quotient filtration $f_iQ := \mathrm{image} \{ f_i M \rightarrow Q \}$. Then
 $f_i Q /f_{i-1} Q$ is a quotient of $f_i M  /f_{i-1} M$, whence the result as 
before, {\em mutatis mutandis}.

 Closure under tensor product is a consequence of the fact that $\otimes$ 
restricts to $\otimes : \nil_i \times \nil_j \rightarrow \nil_{i+j}$ 
\cite{schwartz_book,K14}. 
Closure under $\Sigma$ follows since the suspension functor  identifies with 
$\Sigma \field \otimes - $.

The remaining statements are straightforward.
\end{proof}

\begin{exam}
\label{exam:almost}
As usual, extend the unstable algebra structure of $\field [u]$ to an algebra 
structure
in $\amod$ on $\field [u^{\pm 1}]$. Consider the submodule $M:= \field[u^{\pm 
1}]_{\geq -1}  \subset \field 
[u^{\pm 
1}] \in \obj \amod$, so that 
 $\Sigma M$ occurs in the short exact sequence:
 \[
  0 
  \rightarrow 
  \Sigma \field [u]
  \rightarrow 
  \Sigma M 
  \rightarrow 
  \field
  \rightarrow 
  0.
 \]
This exhibits $\Sigma M$ as an almost unstable module, whereas $M$ is not,
since 
it is non-zero in degree $-1$. Note that  $\Sigma^{t} M$ ($t \in \zed$) is 
never 
unstable. 
\end{exam}

\begin{rem}
 Proposition \ref{prop:alunst} implies that $\alunst$ is closed under the 
formation of finite limits and finite colimits. 
 \begin{enumerate}
  \item 
  That closure under inverse limits fails in general is clear from  Proposition 
\ref{prop:alunst} (\ref{item:alunst_bounded}), since {\em any} $\cala$-module 
$M$
   is the inverse limit of its system of truncations $M^{\leq k}$ (the quotient 
of $M$ by elements of degrees $>k$) as $k \rightarrow \infty$.
   \item 
   Closure under colimits also fails in general, as exhibited by the following 
example. For $0< t \in \nat$,  let $N(t)$ denote the subquotient $\Phi^t F(1) / 
\Phi^{2t} F(1)$ of the free 
unstable module $F(1)$; this has total dimension $t$, with 
classes in degrees $2^i$, $t \leq i < 2t$, linked by the operation 
$Sq_0$. 
   
   Consider the $\cala$-module:
   \[
    M:= \bigoplus _{t>1} \Sigma^{t -2^t} N(t).
   \]
Proposition \ref{prop:alunst} (\ref{item:alunst_bounded}) implies that each $ 
\Sigma^{t -2^t} N(t)$ is almost unstable  (and the choice of the desuspension 
ensures that $M$ is of finite type). 

However, $M$ is not almost unstable; if it were, there would exist $T \in \nat$ 
such that $\Sigma ^T M$ admits a finite filtration (say of length $l \in \nat$) 
such that each subquotient is unstable. 
Choosing $t \in \nat$ such that $t> l$ and  $t -2^t + T  <0$, consideration of  
the factor $ \Sigma^{t -2^t} N(t)$ leads to a contradiction.
\end{enumerate}
\end{rem}

\subsection{The nilpotent filtration of $\alunst$}

The above notions can be refined by introducing an analogue of the nilpotent 
filtration of $\unst$. 

\begin{defn}
 For $i \in \nat$, let $\alnil_i \subset \alunst$ be the full subcategory of 
objects which  admit a finite filtration with subquotients of the form 
 $\Sigma^{-t} N$ for some $t \in \nat$ and $N \in \nil_{i+t}$.
\end{defn}

By definition, there is a decreasing filtration:
\[
 \ldots \subset \alnil_{t+1} \subset \alnil_t \subset \ldots \subset \alnil_0 = 
\alunst. 
\]

Proposition \ref{prop:alunst} generalizes to:

\begin{prop}
\label{prop:alnil_serre_sigma}
 For $s, t \in \nat$:
\begin{enumerate}
 \item 
 $\alnil_t$ is a Serre subcategory of $\alunst$;
 \item 
 tensor product restricts to $\otimes : \alnil_s \times \alnil_t 
\rightarrow \alnil_{s+t}$:
\item 
suspension induces $\Sigma : \alnil_s \rightarrow \alnil_{s+1}$
which is an equivalence of categories, with inverse $\Sigma^{-1}$.
\end{enumerate}
 \end{prop}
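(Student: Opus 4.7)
The plan is to follow the pattern of Proposition \ref{prop:alunst}, tracking the extra nilpotent depth $t$ at each stage. The three standing inputs are: each $\nil_k \subset \unst$ is a Serre subcategory; $\otimes : \nil_a \times \nil_b \to \nil_{a+b}$; and $\Sigma : \nil_k \to \nil_{k+1}$ (all from \cite{schwartz_book,K14}).

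For (1), I would fix $M \in \alnil_t$ with a witnessing filtration $\{f_i M\}$ and verify closure under subobjects, quotients, and extensions. Extensions are immediate by concatenating filtrations. For a submodule $K \hookrightarrow M$, set $f_i K := K \cap f_i M$; then $f_i K/f_{i-1}K$ injects into $f_iM/f_{i-1}M = \Sigma^{-s} N$ with $N \in \nil_{t+s}$, and desuspending identifies it with $\Sigma^{-s}$ of a submodule of $N$, still in $\nil_{t+s}$ by the Serre property. Quotients are handled dually using image filtrations, exactly as in Proposition \ref{prop:alunst}. For (2), given $M \in \alnil_s$ and $N \in \alnil_t$ with witnessing filtrations, I would form the tensor product double filtration; its subquotients have the form $\Sigma^{-a}A \otimes \Sigma^{-b}B \cong \Sigma^{-(a+b)}(A \otimes B)$ with $A \otimes B \in \nil_{(s+a)+(t+b)} = \nil_{(s+t)+(a+b)}$, exactly the shape required for $\alnil_{s+t}$.

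For (3), that $\Sigma$ lands in $\alnil_{s+1}$ is checked by cases on the subquotient index $t$: when $t \geq 1$, $\Sigma\cdot\Sigma^{-t}N = \Sigma^{-(t-1)}N$ with $N \in \nil_{s+t}=\nil_{(s+1)+(t-1)}$; when $t=0$, use $\Sigma : \nil_s \to \nil_{s+1}$ to rewrite $\Sigma N$ as $\Sigma^{-0}(\Sigma N)$ with $\Sigma N \in \nil_{s+1}$. The main technical point I expect is the construction of the inverse, since $\Sigma^{-1}$ is defined only on $\amod$, not on $\unst$. The crucial observation is that every $M \in \alnil_{s+1}$ is concentrated in degrees $\geq s+1$: each subquotient $\Sigma^{-t}N$ with $N \in \nil_{s+1+t}$ is concentrated there, so the underlying module is too. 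Hence the graded desuspension $\Sigma^{-1}M \in \amod$ sits in non-negative degrees, and its induced filtration subquotients $\Sigma^{-(t+1)}N$ with $N \in \nil_{s+1+t}=\nil_{s+(t+1)}$ exhibit $\Sigma^{-1}M \in \alnil_s$. The identities $\Sigma\Sigma^{-1}=\id=\Sigma^{-1}\Sigma$ already hold on the ambient category $\amod$, so the two functors are mutually inverse and the equivalence follows.
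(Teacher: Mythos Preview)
Your argument is correct. The difference from the paper's proof is organizational: the paper first establishes (3), showing that $\Sigma^{-1} : \amod \rightarrow \amod$ restricts to $\alnil_{s+1} \rightarrow \alnil_s$ by the same subquotient check you give, and then deduces (1) and (2) by transport along the equivalences $\Sigma^t : \alnil_0 = \alunst \stackrel{\simeq}{\rightarrow} \alnil_t$, invoking Proposition~\ref{prop:alunst} for the base case. You instead reprove (1) and (2) from scratch, rerunning the filtration arguments of Proposition~\ref{prop:alunst} while tracking the index $t$. Your route is more self-contained and makes the Serre and tensor properties explicit at each level; the paper's route is shorter and highlights that nothing essentially new is needed beyond the $t=0$ case once the $\Sigma$-equivalence is in hand. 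One small remark: your connectivity observation (that $M \in \alnil_{s+1}$ is concentrated in degrees $\geq s+1$) is correct but not actually needed---the filtration subquotients $\Sigma^{-(t+1)}N$ with $N \in \nil_{s+(t+1)}$ already witness membership in $\alnil_s$ directly, and membership in $\alunst$ follows since $\nil_{s+(t+1)} \subset \nil_{t+1}$.
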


\begin{proof}
Once established that $\Sigma$ induces an equivalence of categories, the
properties follow from the case of $\alunst = \alnil_0$.

To show that $\Sigma^{-1} : \amod \rightarrow \amod$ induces a functor
$\alnil_{s+1} \rightarrow \alnil_s$,  since $\Sigma^{-1}$ is exact, it suffices
to check on an almost unstable module 
of the form $\Sigma^{-i} N$ with $i \in \nat$ and $N \in \nil_{s+1 +i}$. 
 Then $\Sigma^{-1} (\Sigma^{-i} N)$ can be written $\Sigma^{-(i+1)} N$ with $N$ 
considered as lying in $\nil_{s+(i+1)}$.
\end{proof}

The category $\alunst$ is not stable under  $\Sigma^{-1}$. In combination with 
Proposition \ref{prop:destab_alnil} below, the above result should be compared 
with 
 the fact \cite{schwartz_book} that the loop functor $\Omega : \unst
\rightarrow 
\unst$ restricts to $\Omega : \nil_{i+1} \rightarrow \nil_i$.

\begin{prop}
\label{prop:alunst_stable_Phi_R}
For $i \in  \nat$,
 the Frobenius functor $\Phi : \amod \rightarrow \amod$ restricts to
 \[
  \Phi : \alnil_i \rightarrow \alnil_{2i}.
 \]
Hence, for $1 \leq n \in \nat$, the truncated Singer functor $R_{1/n}: \amod 
\rightarrow \amod$   restricts to:
 \[
  R_{1/n} : \alnil_i \rightarrow \alnil_{2i}.
 \]
\end{prop}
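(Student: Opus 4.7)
The plan is to prove both statements by lifting filtrations along exact functors and exploiting the compatibility of $\Phi$ with $\Sigma$.

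For the Frobenius part, I would take $M \in \alnil_i$ with a finite filtration whose successive subquotients have the form $\Sigma^{-t} N$ for some $t \in \nat$ and $N \in \nil_{i+t}$. Exactness of $\Phi$ (Lemma \ref{lem:Phi}) yields a finite filtration of $\Phi M$ with subquotients $\Phi(\Sigma^{-t} N)$. The natural isomorphism $\Phi \Sigma \cong \Sigma^2 \Phi$ of Lemma \ref{lem:Phi}, combined with the fact that $\Sigma$ is an autoequivalence of $\amod$, yields $\Phi \Sigma^{-t} \cong \Sigma^{-2t} \Phi$ by iteration; each subquotient is therefore isomorphic to $\Sigma^{-2t} \Phi N$. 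Since Lemma \ref{lem:Phi} also gives $\Phi N \in \nil_{2(i+t)} = \nil_{2i + 2t}$, this subquotient meets the definition of $\alnil_{2i}$ with parameter $2t$, whence $\Phi M \in \alnil_{2i}$.

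For the Singer functor part, I would invoke Lemma \ref{lem:R_truncated}: by exactness and naturality in $M$, $R_{1/n} M$ carries a finite $u$-adic filtration whose subquotients are the objects $\Sigma^s \Phi M$, $0 \leq s < n$. By the Frobenius case, $\Phi M \in \alnil_{2i}$, and Proposition \ref{prop:alnil_serre_sigma} gives $\Sigma : \alnil_{2i+s-1} \rightarrow \alnil_{2i+s}$, so $\Sigma^s \Phi M \in \alnil_{2i+s} \subset \alnil_{2i}$ for each $s$. Since $\alnil_{2i}$ is a Serre subcategory of $\alunst$ (Proposition \ref{prop:alnil_serre_sigma}), it is closed under extensions, and iterated extension along the $u$-adic filtration gives $R_{1/n} M \in \alnil_{2i}$.

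The only point requiring mild care beyond the cited lemmas is the extension of the identity $\Phi \Sigma^t \cong \Sigma^{2t} \Phi$ to negative $t$, but this is not a genuine obstacle: $\Sigma^{\pm 1}$ and $\Phi$ are exact functors on $\amod$, and $\Sigma$ is an autoequivalence, so the isomorphism $\Phi \Sigma \cong \Sigma^2 \Phi$ can be rearranged as $\Sigma^{-2} \Phi \cong \Phi \Sigma^{-1}$ and iterated. Granted this, the argument is simply bookkeeping with the defining filtrations of $\alnil_\bullet$.
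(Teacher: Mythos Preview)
Your proposal is correct and follows essentially the same approach as the paper: reduce by exactness to the building blocks $\Sigma^{-t}N$, apply $\Phi\Sigma^{-t}\cong\Sigma^{-2t}\Phi$ together with $\Phi:\nil_{i+t}\rightarrow\nil_{2(i+t)}$, and then handle $R_{1/n}$ via its $u$-adic filtration with subquotients $\Sigma^s\Phi M$. You are simply more explicit than the paper about the filtration bookkeeping and about extending $\Phi\Sigma\cong\Sigma^2\Phi$ to negative powers.
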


\begin{proof}
 The functors considered are exact, hence it suffices to consider behaviour on
a 
module of the form $\Sigma^{-t} N$ with $N \in \obj \nil_{i+t}$. 
 Now $\Phi \Sigma^{-t} N \cong \Sigma^{-2t} \Phi N$ and $\Phi N \in
\nil_{2(i+t)}$ by Lemma \ref{lem:Phi}, 
which implies the first statement. The corresponding statement for 
 $R_{1/n}$ then follows using the  $u$-adic
filtration (cf. Lemma \ref{lem:R_truncated}).
\end{proof}

\begin{prop}
\label{prop:destab_alnil}
 For $i \in \nat$, the destabilization functor restricts to 
 \[
  \destab : \alnil_i \rightarrow \nil_i.
 \]
\end{prop}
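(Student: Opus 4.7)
The plan is to reduce to a single subquotient of the form $\Sigma^{-t}N$ with $N \in \nil_{i+t}$ by exploiting right exactness of $\destab$, and then identify $\destab(\Sigma^{-t}N)$ with the iterated loop functor applied to $N$.

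More precisely, first I would recall that $\destab$, being a left adjoint, is right exact; and that $\nil_i$, as a localizing subcategory of $\unst$, is closed under quotients and extensions. Given $M \in \obj\alnil_i$ with a finite filtration $0 = f_{-1}M \subset f_0 M \subset \ldots \subset f_l M = M$ whose subquotients $f_j M / f_{j-1} M$ are of the form $\Sigma^{-t_j} N_j$ with $N_j \in \nil_{i+t_j}$, right exactness of $\destab$ yields exact sequences
\[
\destab(f_{j-1}M) \rightarrow \destab(f_j M) \rightarrow \destab(\Sigma^{-t_j} N_j) \rightarrow 0.
\]
An easy induction on $j$ (using closure of $\nil_i$ under extensions and quotients) then reduces the statement to the case of a single almost unstable module of the form $\Sigma^{-t} N$ with $N \in \obj\nil_{i+t}$.

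For this basic case, I would use the identification recalled in Section \ref{sect:alg_prelim}: since $N$ is unstable, the composite $\destab \Sigma^{-t}$, restricted to $\unst$, coincides with the iterated loop functor $\Omega^t : \unst \rightarrow \unst$. Hence $\destab(\Sigma^{-t} N) \cong \Omega^t N$. Iterating the fact (cited in the discussion following Proposition \ref{prop:alnil_serre_sigma}) that $\Omega : \nil_{j+1} \rightarrow \nil_j$ restricts, one obtains $\Omega^t N \in \obj\nil_i$, as required.

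The argument is essentially bookkeeping; the one point that requires care is that one must apply right exactness before identifying the destabilization of each subquotient, because $\Sigma^{-t}N$ is not unstable in general, so $\destab$ does not pass directly through the filtration. The use of Proposition \ref{prop:alnil_serre_sigma} guarantees that each subquotient of the filtration is itself almost unstable of the prescribed form, so that the inductive step is well posed. No deeper obstacle appears to arise.
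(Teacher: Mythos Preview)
Your proposal is correct and follows essentially the same approach as the paper: reduce via right exactness of $\destab$ and the localizing property of $\nil_i$ to the basic case $\Sigma^{-t}N$ with $N\in\nil_{i+t}$, then identify $\destab\Sigma^{-t}$ with $\Omega^t$ on $\unst$ and invoke $\Omega^t:\nil_{i+t}\to\nil_i$. The only minor remark is that the subquotients are already of the form $\Sigma^{-t_j}N_j$ by the very definition of $\alnil_i$, so the appeal to Proposition~\ref{prop:alnil_serre_sigma} in your last paragraph is superfluous.
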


\begin{proof}
 The category $\nil_i$ is localizing and $\destab$
is 
right exact, hence it suffices to consider $\destab$ applied 
 to a module of the form $\Sigma^{-t} N$ with $N \in \obj \nil_{i+t}$. By 
construction, the composite functor $\destab \Sigma^{-t}$ restricted 
 to $\unst$ is the iterated loop functor $\Omega^t$. Since $\Omega^t$ restricts 
to 
 $\Omega ^t : \nil_{i+t} \rightarrow \nil_i$ \cite{schwartz_book}, this 
establishes the result.
\end{proof}

Recall that $\nlp_i : \unst \rightarrow \nil_i \subset \unst$ denotes the right 
adjoint to $\nil_i \hookrightarrow \unst$. 

\begin{cor}
\label{cor:nili_adjunction}
 For $M \in \obj \alnil_i $ and $N \in \obj \unst$, the inclusion  $\nlp_i N 
\hookrightarrow N$ induces an isomorphism
 \[
  \hom_{\amod} (M, \nlp_i N) \stackrel{\cong}{\rightarrow} \hom_{\amod} (M, N). 
 \]
\end{cor}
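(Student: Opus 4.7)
The plan is to reduce the statement to the defining adjoint property of $\nlp_i$ by first transferring the $\hom$-groups from $\amod$ to $\unst$ via the destabilization adjunction. Recall that $\destab : \amod \to \unst$ is left adjoint to the inclusion $\unst \hookrightarrow \amod$, so that for any unstable module $P$ one has a natural isomorphism $\hom_{\amod}(M,P) \cong \hom_{\unst}(\destab M, P)$.

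First, I would apply this isomorphism with $P = \nlp_i N$ and with $P = N$, noting that both targets lie in $\unst$. By naturality of the destabilization adjunction, the map induced by the inclusion $\nlp_i N \hookrightarrow N$ on $\hom_{\amod}(M,-)$ is identified with the corresponding map $\hom_{\unst}(\destab M, \nlp_i N) \to \hom_{\unst}(\destab M, N)$ induced by the same inclusion.

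The key step is then to invoke Proposition \ref{prop:destab_alnil}, which guarantees that $\destab M \in \obj \nil_i$. Since $\nlp_i$ is by definition the right adjoint to the inclusion $\nil_i \hookrightarrow \unst$, the counit $\nlp_i N \hookrightarrow N$ induces an isomorphism $\hom_{\unst}(\destab M, \nlp_i N) \stackrel{\cong}{\to} \hom_{\unst}(\destab M, N)$. Composing with the two instances of the destabilization adjunction yields the required isomorphism of $\hom_{\amod}$ groups.

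There is no substantive obstacle: the corollary is a formal consequence of Proposition \ref{prop:destab_alnil} together with the two standard adjunctions. The only care needed is to verify that the composite isomorphism is indeed the one induced by $\nlp_i N \hookrightarrow N$, which follows from the naturality of both adjunctions in the target variable.
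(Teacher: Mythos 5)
Your proof is correct and follows essentially the same route as the paper: apply the destabilization adjunction to reduce to maps out of $\destab M$, invoke Proposition \ref{prop:destab_alnil} to place $\destab M$ in $\nil_i$, and conclude by the universal property of $\nlp_i$. The paper states this more tersely but with identical content.
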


\begin{proof}
 Since $N$ and $\nlp_i N$ are unstable, the morphism identifies with 
  \[
  \hom_{\amod} (\destab M , \nlp_i N) {\rightarrow} \hom_{\amod} (\destab M, 
N). 
 \]
Proposition \ref{prop:destab_alnil} shows that $\destab M \in \obj \nil_i$, 
whence the result.
 \end{proof}

 As a particular case of Corollary \ref{cor:nili_adjunction}, one obtains: 
 
 \begin{cor}
 \label{cor:alnil_1_red}
 For $M \in \obj \alnil_1 $ and $N \in \obj \unst$ a reduced unstable module, 
 \[
  \hom_{\amod} (M,  N)=0.
 \] 
 \end{cor}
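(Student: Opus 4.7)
The plan is to apply Corollary \ref{cor:nili_adjunction} directly with $i = 1$. This produces a natural isomorphism
\[
 \hom_{\amod}(M, \nlp_1 N) \stackrel{\cong}{\rightarrow} \hom_{\amod}(M, N),
\]
so it suffices to show that $\nlp_1 N$ vanishes whenever $N \in \obj \unst$ is reduced.

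By construction $\nlp_1 N$ is the largest submodule of $N$ belonging to $\nil_1 = \nil$; equivalently, it is the largest nilpotent submodule of $N$. The hypothesis that $N$ is reduced is, by definition, the assertion that $N$ contains no non-trivial nilpotent submodule, so $\nlp_1 N = 0$ and the desired vanishing follows at once.

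There is no serious obstacle in deducing this corollary: the work has already been done. The substance lies in Corollary \ref{cor:nili_adjunction}, which itself rests on Proposition \ref{prop:destab_alnil} --- the fact that the destabilization functor sends $\alnil_i$ into $\nil_i$ --- together with the adjunction between $\destab$ and the inclusion $\unst \hookrightarrow \amod$. The useful feature of the present statement is that the source $M$ is permitted to live in the ambient category $\amod$ and need not itself be unstable, whereas the target $N$ is a genuinely unstable (and reduced) module; the reduction hypothesis on $N$ is doing the work of detecting that any morphism from $M$ must factor through the (vanishing) nilpotent radical $\nlp_1 N$.
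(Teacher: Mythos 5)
Your proof is correct and is exactly the paper's route: the paper states the corollary as "a particular case of Corollary \ref{cor:nili_adjunction}," and you supply the only detail being suppressed, namely that $\nlp_1 N = 0$ when $N$ is reduced. Nothing more needs to be said.
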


 \subsection{Good almost unstable modules}

For $M \in \obj \alunst$, there is a canonical surjection to a reduced unstable 
module, namely:
\[
 M \twoheadrightarrow (\destab M)/\nlp_1(\destab M). 
\]
In many cases of interest, the kernel of this map lies in $\alnil_1$.
This  motivates the following:

\begin{defn}
\label{defn:good_alunst}
 A module $M \in \obj \amod$ is a good almost unstable module if it is almost 
unstable and the kernel of  $M \twoheadrightarrow (\destab M)/\nlp_1(\destab M)$
lies in $\alnil_1$.
\end{defn}

\begin{lem}
 \label{lem:good_equivalent}
 A module $M \in \obj \amod$ is a good almost unstable module if and only if 
the 
kernel of $M \twoheadrightarrow \destab M$ lies in $\alnil_1$. 
\end{lem}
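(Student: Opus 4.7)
The plan is to exhibit a short exact sequence comparing the two kernels, and then deduce the equivalence from the fact that $\alnil_1$ is a Serre subcategory (Proposition \ref{prop:alnil_serre_sigma}). In both directions we are assuming that $M$ is almost unstable, so the question is purely about which kernel lands in $\alnil_1$.

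Write $\eta : M \twoheadrightarrow \destab M$ for the canonical surjection (obtained from the unit of the adjunction $\destab \dashv \iota$, which is surjective since destabilization is formed by quotienting out instability relations). Set
\[
 K := \ker \bigl( M \twoheadrightarrow \destab M \bigr),
 \qquad
 \tilde K := \ker \bigl( M \twoheadrightarrow (\destab M)/\nlp_1 \destab M \bigr).
\]
Since the second map factors as $M \twoheadrightarrow \destab M \twoheadrightarrow (\destab M)/\nlp_1\destab M$, one has $K \subset \tilde K$ and the snake lemma (or a direct inspection) yields a natural short exact sequence in $\amod$:
\[
 0 \rightarrow K \rightarrow \tilde K \rightarrow \nlp_1 \destab M \rightarrow 0.
\]

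The key observation is that $\nlp_1 \destab M$ lies in $\nil_1$ by definition of $\nlp_1$, and $\nil_1$ is contained in $\alnil_1$ (take $t=0$ in Definition of $\alnil_i$, so that any $N \in \nil_1$ is itself a one-step filtration of the required form $\Sigma^{-0} N$). Hence $\nlp_1 \destab M \in \obj \alnil_1$.

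By Proposition \ref{prop:alnil_serre_sigma}, $\alnil_1$ is a Serre subcategory of $\alunst$, and in particular closed under subobjects, quotients and extensions in $\alunst$. If $\tilde K \in \obj \alnil_1$, then its submodule $K$ also lies in $\alnil_1$. Conversely, if $K \in \obj \alnil_1$, then the above short exact sequence exhibits $\tilde K$ as an extension of two objects of $\alnil_1$, hence $\tilde K \in \obj \alnil_1$. This yields the equivalence.

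The only point requiring care is that the arguments take place inside $\alunst$: the modules $K$ and $\tilde K$ are submodules of the almost unstable module $M$, and $\nlp_1 \destab M$ is a quotient of the unstable (hence almost unstable) module $\destab M$, so all the objects in sight already belong to $\alunst$ by Proposition \ref{prop:alunst}, allowing the Serre property to be applied directly.
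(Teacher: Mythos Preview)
Your proof is correct and follows exactly the line the paper intends: the paper's one-sentence argument (``By definition $\nlp_1(\destab M)$ lies in $\nil_1$, whence the result'') is shorthand for the short exact sequence $0 \to K \to \tilde K \to \nlp_1 \destab M \to 0$ and the Serre-subcategory argument you have spelled out. One small remark: you need not \emph{assume} $M$ almost unstable in the ``if'' direction, since if $K \in \alnil_1$ then $M$ is an extension of $\destab M \in \unst \subset \alunst$ by $K \in \alnil_1 \subset \alunst$, and $\alunst$ is closed under extensions.
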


\begin{proof}
By definition $\nlp_1(\destab M)$ lies in $\nil_1$, whence the result.
\end{proof}

\begin{nota}
\label{nota:good_alunst}
  If $M \in \obj \alunst $ is good, 
 write the associated short exact sequence:
 \[
  0 
  \rightarrow M' 
  \rightarrow M 
  \rightarrow 
  \rho_0 M 
  \rightarrow 0,
 \]
where $M' \in \obj \alnil_1$ and $\rho_0 M \in \obj \unst$ is reduced.
\end{nota}

\begin{exam}
 Every unstable module is good when considered as an almost unstable module.
More generally, if $M$ is 
 of the form $\Sigma^{-t}N$ with $N \in \nil_t$, then $M$ is good almost
unstable, with associated exact sequence:
 \[
0
\rightarrow 
\Sigma^{-t} \nlp_{t+1} N
\rightarrow 
M 
\rightarrow 
\rho_t N
\rightarrow 
0.
 \]
In particular, if $t=0$ (so that $M$ is unstable), there is no conflict with the
notation $\rho_0 M $.  
\end{exam}

 \begin{prop}
 \label{prop:good_alunst}
  A subquotient of a good almost unstable module is good almost unstable.
  Moreover, if $f : M \twoheadrightarrow Q $ is a surjection from a good almost 
unstable module, then 
  $f$ induces a surjection $\rho_0 f : \rho_ 0 M \twoheadrightarrow \rho_0 Q$ 
which is an isomorphism if and only if 
  $\ker f$ lies in $\alnil_1$. 
  \end{prop}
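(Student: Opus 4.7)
The plan is to treat the two assertions separately, exploiting the equivalent characterization of goodness provided by Lemma \ref{lem:good_equivalent}: an almost unstable $M$ is good if and only if $K_M := \ker(M \twoheadrightarrow \destab M)$ lies in $\alnil_1$. Throughout I will freely use that $\alnil_1$ is a Serre subcategory of $\alunst$ (Proposition \ref{prop:alnil_serre_sigma}) and that $\destab$ is right exact as a left adjoint.

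For the subquotient claim, I split into submodules and quotients. If $N\subset M$, then $N \in \alunst$ by Proposition \ref{prop:alunst}. The submodule $K_N = \ker(N \twoheadrightarrow \destab N)$ is the sub $\cala$-module of $N$ generated by the excess-violating classes, and each such generator is also an excess-violating class in $M$; hence $K_N \subset K_M \cap N \subset K_M \in \alnil_1$, so $K_N \in \alnil_1$ by the Serre property and $N$ is good. If $f\colon M \twoheadrightarrow Q$ has kernel $L$, then right exactness of $\destab$ identifies $\destab Q = M/(L+K_M)$, so
\[
K_Q \;=\; \ker\bigl(Q \to \destab Q\bigr) \;\cong\; K_M/(K_M\cap L),
\]
a quotient of $K_M\in\alnil_1$, again in $\alnil_1$ by the Serre property; thus $Q$ is good.

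For the second statement, the induced map $\destab M \twoheadrightarrow \destab Q$ carries $\nlp_1\destab M$ into $\nlp_1\destab Q$ (the image is nilpotent), which yields the asserted surjection $\rho_0 f\colon \rho_0 M\twoheadrightarrow \rho_0 Q$. The key auxiliary fact needed for the equivalence is the following: in any short exact sequence $0\to J\to B\to C\to 0$ of unstable modules with $J$ nilpotent, the natural map $\nlp_1 B\to \nlp_1 C$ is surjective. Indeed, any element of $\nlp_1 C$ lifts to a class in $B$ whose generated submodule is an extension of a nilpotent by a subobject of $J$, hence nilpotent, hence in $\nlp_1 B$. Consequently $\nlp_1 C \cong \nlp_1 B/J$ and the induced map $\rho_0 B\to \rho_0 C$ is an isomorphism whenever $J\in\nil_1$.

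Granting this, the two directions run as follows. For $(\Leftarrow)$, if $L\in\alnil_1$, Proposition \ref{prop:destab_alnil} gives $\destab L\in\nil_1$, so the kernel of $\destab M\twoheadrightarrow\destab Q$ (a quotient of $\destab L$) lies in $\nil_1$, and the auxiliary fact above yields $\rho_0 M \cong \rho_0 Q$. For $(\Rightarrow)$, observe that $\ker(M\twoheadrightarrow\rho_0 M)$ is an extension of $K_M\in\alnil_1$ by $\nlp_1\destab M\in\nil_1\subset\alnil_1$, hence itself lies in $\alnil_1$. The composite $L\hookrightarrow M\twoheadrightarrow\rho_0 M\stackrel{\rho_0 f}{\to}\rho_0 Q$ vanishes since already $L\to Q$ is zero; if $\rho_0 f$ is injective, then $L\subset\ker(M\twoheadrightarrow\rho_0 M)\in\alnil_1$, and the Serre property forces $L\in\alnil_1$.

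The one genuine subtlety is the auxiliary lemma on $\nlp_1$ under a quotient with nilpotent kernel, which is the only nontrivial tool invoked in the $(\Leftarrow)$ direction of the iff; all remaining steps are formal consequences of the Serre property of $\alnil_1$, the right exactness of $\destab$, and Proposition \ref{prop:destab_alnil}.
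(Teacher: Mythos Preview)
Your proof is correct. The approach differs from the paper's in emphasis: the paper works directly with the defining short exact sequence $0 \to M' \to M \to \rho_0 M \to 0$ (Notation~\ref{nota:good_alunst}) and passes to the induced filtrations on a submodule or quotient, checking that the resulting subquotients of $M'$ lie in $\alnil_1$ and that the resulting subquotients of $\rho_0 M$ are unstable; the iff at the end is declared ``clear''. You instead work throughout with the equivalent criterion of Lemma~\ref{lem:good_equivalent}, describing $K_M=\ker(M\to\destab M)$ explicitly via excess-violating classes and exploiting right exactness of $\destab$ to identify $K_Q$ as a quotient of $K_M$. This is a little more computational but has the advantage of making the iff genuinely transparent: your auxiliary observation (that for $0\to J\to B\to C\to 0$ in $\unst$ with $J\in\nil_1$ the map $\nlp_1 B\to\nlp_1 C$ is surjective, by the extension-closure of $\nil_1$) is exactly the missing ingredient the paper suppresses. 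Either route works; yours is self-contained where the paper's is terser.
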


  \begin{proof}
  Let $M$ be a good almost unstable module with associated short exact sequence 
as in Notation \ref{nota:good_alunst}. 
  Consider a submodule $K$; setting $K' := K \cap M'$, one has the 
morphism of short exact sequences:
  \[
   \xymatrix{
   0 
   \ar[r]
   &
   K' \ar[r]
   \ar@{^(->}[d]
   &
   K 
   \ar[r]
   \ar@{^(->}[d]
   &
K''
   \ar[r]
   \ar@{^(->}[d]
   &
   0
   \\
   0
   \ar[r]
   &
   M' 
   \ar[r]
   &
   M 
   \ar[r]
   &
   \rho_0M
   \ar[r]
   &
   0,
   }
  \]
which shows that $K'' \subset \rho_0 M $ is a reduced unstable module and $K' 
\subset M'$ belongs to $\alnil_1$, thus $K$ is good. 

Similarly, for a surjection $M \twoheadrightarrow Q$, there is a morphism of 
short exact sequences:
 \[
   \xymatrix{
   0 
   \ar[r]
   &
   M' \ar[r]
   \ar@{->>}[d]
   &
   M
   \ar[r]
   \ar@{->>}[d]
   &
\rho_0 M 
   \ar[r]
   \ar@{->>}[d]
   &
   0
   \\
   0
   \ar[r]
   &
   \tilde{Q}
   \ar[r]
   &
   Q
   \ar[r]
   &
   Q''
   \ar[r]
   &
   0,
   }
  \]
  where $\tilde{Q}$ is defined by the commutative square on the left, hence 
belongs to $\alnil_1$ since $M'$ does, and $Q''$ is unstable, as a quotient of 
$\rho_0 M$. 
Now $Q''$ is a quotient of $\destab Q$, by Lemma \ref{lem:good_equivalent}, 
thus 
 $Q$ is good.

By construction there is a surjection $\rho_0 M \twoheadrightarrow \rho_0 Q$. 
The final statement is clear.
\end{proof}

\begin{prop}
 \label{prop:good_alunst_stability}
 The class of good almost unstable modules is stable under finite direct sums
and under $\otimes$.
 Moreover, it is preserved by the functors:
 \begin{enumerate}
  \item 
  $\Phi : \alunst \rightarrow \alunst$;
  \item 
  $R_{1/n} : \alunst \rightarrow \alunst$, for $1 \leq n \in \nat$.
 \end{enumerate}
If $M$ is good almost unstable, then 
 \begin{eqnarray*}
  \rho_0 (\Phi M) & \cong & \Phi (\rho_0 M) \\
  \rho_0 (R_{1/n} M) & \cong & R_{1/n} (\rho_0 M) \ \cong  \ \Phi (\rho_0 M).
 \end{eqnarray*}
\end{prop}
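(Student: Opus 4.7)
The plan is to handle each of the four stability assertions by passing through the short exact sequence $0 \to M' \to M \to \rho_0 M \to 0$ of Notation \ref{nota:good_alunst}, producing for each output module a surjection onto a reduced unstable target with kernel in $\alnil_1$.

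Closure under finite direct sums will be immediate from additivity of $\destab$, $\nlp_1$ and of the subcategory $\alnil_1$. For $\otimes$, my approach will be to tensor the defining sequences of $M$ and $N$; since $\otimes$ is exact over $\field_2$, the kernel of the resulting composite surjection $M \otimes N \twoheadrightarrow \rho_0 M \otimes \rho_0 N$ will be an extension of $\rho_0 M \otimes N'$ by $M' \otimes N$, both of which lie in $\alnil_1$ by Proposition \ref{prop:alnil_serre_sigma}. The standard fact that the tensor product of reduced unstable modules over $\cala$ is again reduced unstable will identify the target as the reduced part.

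For $\Phi$ the argument will be equally direct: exactness of $\Phi$, combined with Proposition \ref{prop:alunst_stable_Phi_R} (which gives $\Phi M' \in \alnil_2 \subset \alnil_1$) and Lemma \ref{lem:Phi} (which gives that $\Phi$ preserves reduced unstable modules), will yield the short exact sequence $0 \to \Phi M' \to \Phi M \to \Phi \rho_0 M \to 0$ displaying $\Phi M$ as good with $\rho_0 \Phi M \cong \Phi \rho_0 M$.

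The main obstacle is $R_{1/n}$, since that functor does not preserve reducedness. Here my plan is to compose the exact sequence obtained by applying $R_{1/n}$ to the defining sequence for $M$ with the $u$-adic filtration exact sequence $0 \to u R_{1/n} \rho_0 M \to R_{1/n} \rho_0 M \to \Phi \rho_0 M \to 0$ supplied by Lemma \ref{lem:R_truncated}. The submodule $u R_{1/n} \rho_0 M$ will be a finite iterated extension of $\Sigma^i \Phi \rho_0 M$ for $1 \leq i < n$, hence lies in $\nil_1 \subset \alnil_1$; combined with $R_{1/n} M' \in \alnil_2$ (Proposition \ref{prop:alunst_stable_Phi_R}), this will exhibit the composite surjection $R_{1/n} M \twoheadrightarrow \Phi \rho_0 M$ as having kernel in $\alnil_1$. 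Since $\Phi \rho_0 M$ is reduced unstable (Lemma \ref{lem:Phi}), $R_{1/n} M$ will be good with $\rho_0 R_{1/n} M \cong \Phi \rho_0 M$. Specialising the same argument to the good almost unstable module $\rho_0 M$ will yield $\rho_0 R_{1/n}(\rho_0 M) \cong \Phi \rho_0 M$, so that the intermediate isomorphism $R_{1/n}(\rho_0 M) \cong \Phi \rho_0 M$ appearing in the statement is to be read on the level of reduced parts, or equivalently modulo $\nil$.
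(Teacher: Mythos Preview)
Your argument is correct and is exactly the unwinding the paper has in mind: the published proof is simply ``Straightforward. The statement for $\Phi$ and $R_{1/n}$ is a generalization of Proposition \ref{prop:alunst_stable_Phi_R}, using the fact that $\Phi$ preserves reduced unstable modules.'' Your observation that the displayed isomorphism $R_{1/n}(\rho_0 M) \cong \Phi(\rho_0 M)$ must be read modulo $\nil$ (equivalently after applying $\rho_0$) is well taken; that is how Lemma \ref{lem:R_truncated}(3) is being invoked.
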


\begin{proof}
 Straightforward. The statement for $\Phi$ and $R_{1/n}$ is a generalization of
Proposition \ref{prop:alunst_stable_Phi_R}, using 
 the fact that $\Phi$  preserves reduced unstable modules. 
\end{proof}

\section{Almost unstable spectral sequences}
\label{sect:auss}

The main interest in this section is in spectral sequences which
converge to an unstable module and the following 
natural question: to what extent can the theory of unstable modules be used to
understand the structure of the spectral sequence?

\begin{hyp}
\label{hyp:ss}
Suppose that the spectral sequence $(E_r^{*,*} , d_r)$ satisfies the following
conditions:
\begin{enumerate}
 \item 
it  is second quadrant ($E_r ^{s,t} = 0$ 
if $s >0$ or $t<0$) and cohomological $d_r : E_r ^{*,*} \rightarrow E_r^{*+r, 
*+1-r}$;
\item 
$E_1^{0,*}=0$; 
 \item 
 each $E_r^{-k, *}$ ($k \in \nat$) is an $\cala$-module and $d_r$ is 
$\cala$-linear, namely:
 \[
  d_r : \Sigma (\Sigma^{-k} E_r^{-k,*} ) 
  \rightarrow 
   (\Sigma^{-k+r} E_r^{-k+r,*} ) 
 \]
is a (degree zero) morphism of $\amod$;
 \item 
 the spectral sequence converges strongly and $\bigoplus _k \Sigma^{-k} 
E_\infty^{-k,*}$ 
is the associated graded of an unstable module, in particular each $\Sigma^{-k} 
E_\infty^{-k,*}$ is unstable.
\end{enumerate}
\end{hyp}

\begin{defn}
 A spectral sequence $(E_r^{*,*} , d_r)$ satisfying Hypothesis \ref{hyp:ss} is 
almost unstable (respectively good almost unstable) if 
 $\Sigma^{-k} E_1 ^{-k, *}$ is almost unstable (resp. good almost unstable) for 
all $k \in \nat$. 
\end{defn}

\begin{prop}
\label{prop:good_alunst_ss}
For a spectral sequence $(E_r^{*,*} , d_r)$ satisfying Hypothesis \ref{hyp:ss}, 
which is good almost unstable, 
and $k \in \nat$, 
\begin{enumerate}
 \item 
  $\Sigma^{-k} E_r ^{-k, *}$ is a good unstable module for all $1 \leq r \in 
\nat$;
  \item 
  for $1 \leq r \in \nat$,  $\rho_0 (\Sigma^{-k} E_{r+1}^{-k, *}) \subset
\rho_0 
(\Sigma^{-k} E_{r}^{-k, *})$ 
  with equality if $r>k$; in particular,  $\rho_0 (\Sigma^{-k} E_{\infty}^{-k, 
*}) = \rho_0 (\Sigma^{-k} E_{r}^{-k, *})$ for $r \geq k$. 
\end{enumerate}
\end{prop}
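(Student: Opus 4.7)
The plan is to induct on $r \geq 1$, with the base case $r=1$ given by the hypothesis that the spectral sequence is good almost unstable. For the inductive step, identify $\Sigma^{-k} E_{r+1}^{-k,*}$ as the subquotient $Z_r^{-k}/B_r^{-k}$ of $\Sigma^{-k} E_r^{-k,*}$, where $Z_r^{-k}$ is the kernel of the outgoing $d_r$ and $B_r^{-k}$ the image of the incoming $d_r$. Part (1) is then immediate from Proposition \ref{prop:good_alunst}, since subquotients of good almost unstable modules are good almost unstable.

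For part (2), the essential observation is that both $B_r^{-k}$ and the cokernel $I_r^{-k} := \Sigma^{-k} E_r^{-k,*}/Z_r^{-k}$ lie in $\alnil_1$. Indeed, by Hypothesis \ref{hyp:ss}(3), $B_r^{-k}$ is a quotient of $\Sigma(\Sigma^{-k-r} E_r^{-k-r,*})$, while $I_r^{-k}$ is isomorphic to the image of the outgoing $d_r$, hence a quotient of $\Sigma(\Sigma^{-k} E_r^{-k,*})$. Both sources lie in $\alnil_1$ by Proposition \ref{prop:alnil_serre_sigma} (their desuspensions being in $\alnil_0$ by the inductive hypothesis), and since $\alnil_1$ is Serre, the quotients remain there.

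Applying Proposition \ref{prop:good_alunst} to the surjection $Z_r^{-k} \twoheadrightarrow \Sigma^{-k} E_{r+1}^{-k,*}$, whose kernel $B_r^{-k}$ lies in $\alnil_1$, yields an isomorphism $\rho_0 Z_r^{-k} \cong \rho_0(\Sigma^{-k} E_{r+1}^{-k,*})$. Applying the same proposition to the inclusion $Z_r^{-k} \hookrightarrow \Sigma^{-k} E_r^{-k,*}$ yields an injection $\rho_0 Z_r^{-k} \hookrightarrow \rho_0(\Sigma^{-k} E_r^{-k,*})$. Composing gives the claimed inclusion. When $r>k$, the target $E_r^{-k+r,*}$ of the outgoing $d_r$ vanishes (from $E_1^{0,*}=0$ together with the second quadrant condition, which yields $E_r^{s,*}=0$ for all $s \geq 0$), so $Z_r^{-k} = \Sigma^{-k} E_r^{-k,*}$ and the inclusion becomes an equality. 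The final $E_\infty$ assertion follows from strong convergence in each internal degree together with this stabilization of $\rho_0$.

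The main technical point is the suspension bookkeeping encoded in Hypothesis \ref{hyp:ss}(3): the differential $d_r$ is declared to be degree zero only after inserting a suspension on the source, which is precisely what forces the kernel $B_r^{-k}$ and the cokernel $I_r^{-k}$ to lie one step up in the $\alnil$-filtration, and hence to be invisible to $\rho_0$. Without this, Proposition \ref{prop:good_alunst} could not be pressed into service to carry out the recursive comparison between successive pages, and neither the inclusion nor its stabilization would be available.
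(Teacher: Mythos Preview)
Your proof is correct and follows essentially the same approach as the paper's: both arguments use that subquotients of good almost unstable modules are good (Proposition~\ref{prop:good_alunst}) for part (1), and for part (2) both exploit that the source of $d_r$, once suspended, lies in $\alnil_1$ so that the image of the incoming differential is killed by $\rho_0$, leaving $\rho_0(\Sigma^{-k}E_{r+1}^{-k,*}) \cong \rho_0(\ker d_r) \subset \rho_0(\Sigma^{-k}E_r^{-k,*})$. The only cosmetic difference is that the paper invokes Corollary~\ref{cor:alnil_1_red} to phrase this as ``the image of $d_r$ lands in $(\Sigma^{-k+r}E_r^{-k+r,*})'$,'' whereas you argue directly that $B_r^{-k}$ is a quotient of a module in $\alnil_1$; your observation about $I_r^{-k}$ is correct but not actually needed for the inclusion argument.
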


\begin{proof}
 The first statement follows from Proposition \ref{prop:good_alunst}. 
 
 For the second, the differential is of the form 
 \[
  d_r : \Sigma (\Sigma^{-k} E_r^{-k,*} ) 
  \rightarrow 
   (\Sigma^{-k+r} E_r^{-k+r,*} ), 
 \]
where $\Sigma (\Sigma^{-k} E_r^{-k,*} ) \in \obj \alnil_1$ and $ \Sigma^{-k+r} 
E_r^{-k+r,*} \in \alunst$ is a good unstable module. 
In particular, by Corollary \ref{cor:alnil_1_red}, the composite map 
 \[
  d_r : \Sigma (\Sigma^{-k} E_r^{-k,*} ) 
  \rightarrow 
   (\Sigma^{-k+r} E_r^{-k+r,*} )
   \twoheadrightarrow 
   \rho_0 (\Sigma^{-k+r} E_r^{-k+r,*} )
 \]
 is trivial, thus the image of $d_r$ lies in $(\Sigma^{-k+r} E_r^{-k+r,*} )'$. 
 
 It follows from the final statement of Proposition \ref{prop:good_alunst} that 
$\rho_0 (\Sigma^{-k} E_{r+1}^{-k,*} )$ identifies with $\rho_0 (\ker d_r)$, 
which is a submodule of $\rho_0 (\Sigma^{-k} E_{r}^{-k, *})$,
 by the argument employed in the proof of {\em loc. cit.}.

 The second point follows since the spectral sequence is concentrated in the 
second quadrant with trivial column $E^{0,*}_1$, by hypothesis. 
 \end{proof}

 \begin{cor}
 \label{cor:good_auss_d1}
  For a spectral sequence $(E_r^{*,*} , d_r)$ satisfying Hypothesis 
\ref{hyp:ss}, which is good almost unstable, 
  such that $E_1^{-1,*} = \Sigma (\Sigma^{-t} M) $ for some $t \in \nat$ and $M
\in \nil_t$,
\begin{eqnarray*}
 \rho_0 (\Sigma^{-1} E_\infty ^{-1,*})& = & \rho_0 (\Omega^t M)\ \cong \ \rho_t 
(M) 
\\
  \rho_0 (\Sigma^{-2}E_\infty ^{-2,*})& = & \rho_0 (\Sigma^{-2} E_2 ^{-2,*})
\end{eqnarray*}
Moreover, the differential $d_1 : \Sigma (\Sigma^{-2} E_1^{-2,*} ) 
  \rightarrow 
   \Sigma^{-1} E_1^{-1,*} = \Sigma^{-t} M$ factors across the inclusion 
   \[
    \Sigma^{-t} \nlp_{t+1} M \hookrightarrow \Sigma^{-t} M
   \]
and induces a morphism $d_1 : \rho_0 (\Sigma^{-2} E_1^{-2,*} ) \rightarrow 
\rho_{t+1} M$ and 
\[
 \rho_0 (\Sigma^{-2}E_\infty ^{-2,*}) \cong \ker \{ \rho_0 (\Sigma^{-2} 
E_1^{-2,*} ) \rightarrow \rho_{t+1} M\}. 
\]
 \end{cor}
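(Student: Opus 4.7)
Abbreviate $A := \Sigma^{-2} E_1^{-2,*}$ and $B := \Sigma^{-1} E_1^{-1,*} = \Sigma^{-t} M$, and let $\delta : \Sigma A \to B$ denote the $d_1$ differential (a degree-zero morphism in $\amod$). Each assertion is assembled from Proposition \ref{prop:good_alunst_ss}, Corollary \ref{cor:alnil_1_red}, and the example following Notation \ref{nota:good_alunst}.

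The two statements concerning $E_\infty$ are direct applications of Proposition \ref{prop:good_alunst_ss} with $(k,r) = (1,1)$ and $(2,2)$ respectively. The identification $\rho_0(\Sigma^{-t}M) = \rho_t M$ is the example after Notation \ref{nota:good_alunst}; applying $\destab$ to the exact sequence of that example and comparing with the definition of $\rho_0$ for an unstable module further identifies this with $\rho_0(\Omega^t M)$.

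For the factorization, $\Sigma A$ lies in $\alnil_1$ by Proposition \ref{prop:alnil_serre_sigma} while $\rho_t M$ is reduced unstable, so Corollary \ref{cor:alnil_1_red} forces the composite $\Sigma A \xrightarrow{\delta} B \twoheadrightarrow \rho_t M$ to vanish; hence $\delta$ factors through $\Sigma^{-t}\nlp_{t+1} M$. Composing with $\Sigma^{-t}\nlp_{t+1} M \twoheadrightarrow \Sigma^{-t}(\nlp_{t+1}M/\nlp_{t+2}M) = \Sigma \rho_{t+1} M$ and desuspending (using that $\Sigma$ is invertible on $\amod$) gives a map $d : A \to \rho_{t+1} M$; a second application of Corollary \ref{cor:alnil_1_red} (to $A' \hookrightarrow A$) produces the induced $\bar d : \rho_0 A \to \rho_{t+1} M$.

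The main obstacle is the final identification. Set $K := \Sigma^{-1}\ker \delta \subseteq A$ and $L := \ker(\bar d \circ \pi) \subseteq A$, where $\pi : A \twoheadrightarrow \rho_0 A$. Since the incoming $d_1$ into column $-2$ has source in $\alnil_1$, the argument of Proposition \ref{prop:good_alunst_ss} gives $\rho_0(\Sigma^{-2} E_2^{-2,*}) = \rho_0 K$; and by direct inspection $L' = L \cap A'$, so $\ker \bar d = \pi(L) = L/L' = \rho_0 L$. It therefore suffices to show $\rho_0 L = \rho_0 K$. The restriction of $d$ identifies $L/K$ with a submodule of $\Sigma^{-t-1}\nlp_{t+2} M$, so $L/K \in \alnil_1$. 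The key observation is then that the quotient $L/(L' + K)$ is simultaneously a quotient of $L/K \in \alnil_1$ and of $L/L' = \rho_0 L$, which is reduced unstable; but a module which is both reduced unstable and belongs to $\alnil_1$ has nilpotent destabilization by Proposition \ref{prop:destab_alnil}, hence (being its own destabilization and reduced) vanishes. Therefore $L = L' + K$, yielding $\rho_0 L = L/L' = K/(K \cap L') = K/K' = \rho_0 K$, which completes the proof.
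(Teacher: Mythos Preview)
Your proof is correct and follows essentially the same approach as the paper. The paper's version is terser: for the factorization it suspends by $\Sigma^{t+1}$ and invokes Corollary \ref{cor:nili_adjunction}/Proposition \ref{prop:destab_alnil} to land in $\nlp_{t+1}M$ (and $\nlp_{t+2}M$ on the $\alnil_1$ part), then dispatches the final identification with ``as in the proof of Proposition \ref{prop:good_alunst_ss}''; you instead stay at the $\alnil_1$ level via Corollary \ref{cor:alnil_1_red} and unwind that last step explicitly with the $L/(L'+K)$ argument, which is a clean way to make the paper's reference precise.
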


 \begin{proof}
  The first part  follows  from Proposition 
\ref{prop:good_alunst_ss}. 
  By hypothesis, $(\Sigma^{-2} E_1 ^{-2,*})$ is a good almost unstable 
module, hence there is a short exact sequence
  \[
   0 
   \rightarrow 
   (\Sigma^{-2} E_1 ^{-2,*})'
   \rightarrow
   (\Sigma^{-2} E_1 ^{-2,*})
   \rightarrow
\rho_0  (\Sigma^{-2} E_1 ^{-2,*})
\rightarrow
0
\]
 with $(\Sigma^{-2} E_1 ^{-2,*})' \in \obj \alnil_1$. The differential 
$d_1$ is of the form:
 \[
  d_1 : \Sigma (\Sigma^{-2} E_1 ^{-2,*})
  \rightarrow 
  \Sigma^{-t} M 
 \]
hence $\Sigma^t d_1 : \Sigma^{t+1} (\Sigma^{-2} E_1 ^{-2,*})
  \rightarrow  M$. As $(\Sigma^{-2} E_1 ^{-2,*})$ is good almost 
unstable, 
   $\Sigma^{t+1} (\Sigma^{-2} E_1 ^{-2,*})$ lies in $\alnil_{t+1}$ and 
   $\Sigma^{t+1} \big((\Sigma^{-2} E_1 ^{-2,*})'\big)$ in $\alnil_{t+2}$. 
  Since $M$ is unstable, Proposition \ref{prop:destab_alnil} implies that 
$\Sigma^t d_1$ maps to $\nlp_{t+1} M$ and 
   its restriction to $\Sigma^{t+1} (\Sigma^{-2} E_1 ^{-2,*})'$ maps to 
$\nlp_{t+2}M$. 
   
The result follows as in the proof of Proposition \ref{prop:good_alunst_ss}.
 \end{proof}

\section{The Arone-Goodwillie spectral sequence}
\label{sect:tower}

In this section, the presentation of \cite{Kuhn_nonrealization}
is followed, since the results of  Section \ref{sect:cohomEP} use   {\em loc. 
cit.}.

For $X$ a pointed space (respectively a spectrum), the Arone-Goodwillie tower
associated to the functor $X \mapsto \Sigma^\infty \Omega^n X$ for  $n \in \nat$
 has the following form:
\[
 \xymatrix{
 & \ & \ar[d]
 \\
 &&P^n _3 X
 \ar[d]
 \\
 &&
 P^n _2 X 
 \ar[d]
\\
 X 
\ar[rruu]|{\epsilon_3}
 \ar[rru]|{\epsilon_2} 
 \ar[rr]|{\epsilon_1} 
 &&
 P^n_1 X, 
 }
\]
where $P^n _1 X= \Sigma^{-n} X$ for $n <\infty$ (ie the spectrum $\Sigma^{-n}
\Sigma^{\infty} X$) and  $P^n_0 = 0$.

Ahearn and Kuhn \cite{AK} identify the fibres of the tower  in terms of the
extended power construction via the cofibre sequence:
\[
 D_{n,j} \Sigma^{-n} X \rightarrow P^n_j X \rightarrow P^n_{j-1} X
\]
for $1 \leq j \in \nat$, where, for a spectrum $Y$,
\[
 D_{n,j} Y:= \Big(\lc(n,j)_+\wedge Y^{\wedge j} \Big) _{h \symm_j},
\]
$\lc(n,j)$ the Boardman-Vogt space of $j$ little $n$-cubes in an $n$-cube. 

For a pointed space $X$, the adjunction unit $X \rightarrow \Omega \Sigma X$
induces an $n$-fold loop map
$\Omega^n X \rightarrow \Omega^{n+1} \Sigma X$, for $n \in \nat$; by
\cite[Corollary 1.2]{AK}, this induces a natural map of towers
$P^n_\bullet X \rightarrow P^{n+1}_{\bullet}\Sigma X$ which identifies on the
level of the fibres as the natural transformation $D_{n,j} \Sigma^{-n} X
\rightarrow D_{n+1,j} \Sigma^{-n}X$ induced by the inclusion 
$\lc (n,j) \hookrightarrow \lc(n+1,j)$. Similarly, the natural evaluation map 
$\epsilon : \Sigma \Sigma^\infty 
\Omega^{n+1} X \rightarrow \Sigma^\infty \Omega^n X$ 
induces a map of towers $\Sigma P^{n+1}_\bullet X \rightarrow P^{n}_\bullet X$ 
and, on  fibres,
$
 \epsilon : \Sigma D_{n+1,j} X \rightarrow D_{n,j} \Sigma X
$
 (see \cite{AK}).

If $X$ is $n$-connected for $n \in \nat$, the connectivity of the maps 
$\epsilon_j$ increases linearly with
$j$, hence:

\begin{prop}
 \cite{Kuhn_nonrealization} 
For $X$ an $n$-connected space with $H^* (X)$ of finite type, the spectral 
sequence associated
to the Arone-Goodwillie tower satisfies Hypothesis \ref{hyp:ss} with 
$$E^{-j, *}_1 = H^* (\Sigma^j D_{n,j} \Sigma^{-n} X)$$
and converges strongly  to $H^* (\Omega^n X)$.

The associated filtration of $H^* (\Omega^n X) $ is
\[
 0 = F_0 H^*(\Omega^n X) \subset  F_1 H^*(\Omega^n X)\subset F_2 H^*(\Omega^n X)
\subset \ldots \subset H^*(\Omega^n X), 
\]
where $F_j H^* (\Omega^n X) = \mathrm{image} \{ H^* (P^n _j X) \rightarrow H^*
(\Omega^n X) \} $.
\end{prop}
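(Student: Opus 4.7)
The plan is to build the spectral sequence from the standard exact couple associated to applying reduced mod $2$ cohomology to the Arone--Goodwillie tower, then verify Hypothesis \ref{hyp:ss} clause by clause. First, I apply $H^*(-)$ to the cofibre sequences $D_{n,j}\Sigma^{-n}X \rightarrow P^n_j X \rightarrow P^n_{j-1}X$ to obtain long exact sequences of $\cala$-modules which assemble into an exact couple. The resulting spectral sequence naturally carries $H^*(D_{n,j}\Sigma^{-n}X)$ in filtration $-j$; the suspension shift $\Sigma^j$ appearing in the statement is bookkeeping so that $\Sigma^{-j}E_1^{-j,*} \cong H^*(D_{n,j}\Sigma^{-n}X)$, which is the form required by the $\cala$-linearity convention of Hypothesis \ref{hyp:ss}(3). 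The differentials $d_r$ are $\cala$-linear because they are assembled from connecting morphisms in cofibre sequences of spectra, which are morphisms in $\amod$.

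The conditions in Hypothesis \ref{hyp:ss}(1)--(3) then reduce to elementary checks. Clause (2) is immediate from $P^n_0 X = 0$. For the second-quadrant vanishing, the hypothesis that $X$ is $n$-connected implies that $\Sigma^{-n}X$ has cohomology concentrated in degrees $\geq 1$; the $j$-fold smash power therefore contributes in degrees $\geq j$, and the homotopy-orbit construction does not lower this, so the $\Sigma^j$ shift matches precisely the vanishing needed for $t<0$. Clause (3) follows from functoriality of cohomology together with the naturality of the connecting morphisms in cofibre sequences.

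The substantive step is clause (4), strong convergence to an unstable module. Here I invoke the Goodwillie-calculus input underlying \cite{AK, Kuhn_nonrealization}: when $X$ is $n$-connected, the connectivity of $\epsilon_j : \Sigma^\infty \Omega^n X \rightarrow P^n_j X$ grows linearly in $j$, so $\Sigma^\infty \Omega^n X \simeq \mathrm{holim}_j P^n_j X$ in the relevant range. Consequently, in each total degree only finitely many columns of $E_1$ contribute, giving strong convergence to $H^*(\Omega^n X)$, with associated filtration $F_j H^*(\Omega^n X) = \mathrm{image}\{H^*(P^n_j X) \rightarrow H^*(\Omega^n X)\}$ as stated. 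Unstability of each $\Sigma^{-j}E_\infty^{-j,*} \cong F_j/F_{j-1}$ is then automatic, since this is a subquotient of the unstable module $H^*(\Omega^n X)$. The main obstacle is the careful verification of this convergence together with the linear connectivity estimate; both are standard in Goodwillie calculus, so the argument reduces essentially to citing the appropriate inputs from \cite{AK, Kuhn_nonrealization} and unwinding the indexing conventions.
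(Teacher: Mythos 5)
Your proposal is correct and follows the same route that the paper takes: the paper simply records the linear growth of the connectivity of the maps $\epsilon_j$ and then cites \cite{Kuhn_nonrealization}, and your argument is the standard unpacking of that citation — exact couple from the tower, bookkeeping of the $\Sigma^j$ shift, clause-by-clause verification of Hypothesis \ref{hyp:ss}, and strong convergence via the connectivity estimate plus finite type, with unstability of the associated graded coming for free from the filtration being by sub-$\cala$-modules of $H^*(\Omega^n X)$.
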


There is a commutative diagram in $\unst$, in which  $ Q H^* (X_+)$ denotes the 
module of indecomposables of the unstable algebra $H^* (X_+)$:
\[
 \xymatrix{
 \Omega^n Q H^* (X_+) \ar[rr] 
 \ar@{->>}[rd]
 &&
 H^* (\Omega^n X) .
 \\
 &
 F_1 H^* (\Omega^n X) 
 \ar@{^(->}[ur] 
 }
\]

The  functor $U: \unst \rightarrow \unstalg$ induces a morphism of unstable 
algebras:
\begin{eqnarray}
\label{eqn:Umap} 
 U (\Omega^n Q H^* (X_+) ) \rightarrow H^* ((\Omega^n X)_+).
\end{eqnarray}

If $M$ is connected, there is a natural inclusion of unstable modules 
\[
 M \rightarrow \overline{UM} \subset UM 
\]
which induces a surjection onto the indecomposables $Q (UM)$; the product of
$UM$ induces an increasing filtration of the augmentation ideal $\overline{U
M}$:
\[
 M = F_1 \overline{U M} \subseteq F_2 \overline{UM} \subseteq F_3 \overline{UM}
\subseteq \ldots \subseteq F_j \overline{UM} \subseteq \ldots \subseteq
\overline{U M}.
\]

The results of Ahearn and Kuhn \cite{AK} imply that this filtration is 
compatible with the
filtration $F_j H^* (\Omega^n X)$; namely, for $1 \leq j \in \nat$, the 
morphism (\ref{eqn:Umap})
restricts to a morphism of unstable modules:
\[
 F_j \overline{U \Omega^n Q H^* (X_+) } \rightarrow F_j H^* (\Omega^n X).
\]

At the prime $p=2$, it is the submodule $F_2 \overline{UM}$ which is of
interest. The construction of $UM$ implies the following:

\begin{lem}
\label{lem:F_2UM}
For $M \in \obj \unst$ and $\lambda : \Phi M \rightarrow M$ the morphism of
unstable modules induced by $Sq_0$, 
$F_2 \overline{UM}$ occurs in the pushout of short exact sequences:
\[
 \xymatrix{
\Phi M 
\ar[d]_\lambda
\ar[r]
&
S^2 M 
\ar[r]
\ar[d]
&
\Lambda^2 M 
\ar@{=}[d]
\\
M 
\ar[r]
&
F_2 \overline{UM}
\ar[r]
&
\Lambda^2 M .
}
\]
 \end{lem}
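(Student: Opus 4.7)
The plan is to exploit the explicit presentation of the enveloping algebra: at $p=2$, $UM$ is the quotient of the symmetric algebra $S^*(M)$ by the two-sided ideal generated by the relations $x^2 = Sq^{|x|} x = \lambda(x)$ for $x \in M$ (where the right-hand side uses the natural transformation $\lambda : \Phi M \to M$ induced by $Sq_0$). Because this relation strictly decreases word length, the word-length filtration of $S^*(M)$ descends to a well-defined filtration of $\overline{UM}$, which is the Ahearn-Kuhn filtration $F_\bullet \overline{UM}$. In particular $F_1 \overline{UM} = M$ and $F_2 \overline{UM}$ is the image of the natural map $M \oplus S^2 M \to UM$, where $S^2 M \to UM$ comes from multiplication (well-defined since $UM$ is commutative and at $p=2$ there are no signs).

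Next I construct the comparison map from the pushout. The composite $\Phi M \hookrightarrow S^2 M \to F_2 \overline{UM}$ sends the class of $x \in M$ (viewed in $\Phi M$) to $x \cdot x \in UM$, which by the defining relation equals $\lambda(x) \in M = F_1 \overline{UM}$. Hence the square with vertical maps $\lambda$ on the left and the inclusion $M \hookrightarrow F_2 \overline{UM}$ on the right commutes, inducing a natural morphism $\psi : P \to F_2 \overline{UM}$, where $P$ denotes the pushout of $M \xleftarrow{\lambda} \Phi M \hookrightarrow S^2 M$.

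To see that $\psi$ is an isomorphism, I compare two short exact sequences. Pushing out the Frobenius short exact sequence $0 \to \Phi M \to S^2 M \to \Lambda^2 M \to 0$ of Lemma \ref{lem:U_ses_Frobenius} along $\lambda$ yields a short exact sequence $0 \to M \to P \to \Lambda^2 M \to 0$, since pushout of a short exact sequence along an arbitrary map preserves the cokernel. On the other hand, $F_2 \overline{UM}$ fits into $0 \to M \to F_2 \overline{UM} \to F_2/F_1 \to 0$, and the quotient $F_2/F_1$ is generated by products $x \cdot y$ with $x,y \in M$, hence is a quotient of $S^2 M$; the defining relation forces $x^2 \equiv 0$ modulo $F_1$, so the surjection $S^2 M \twoheadrightarrow F_2/F_1$ factors through $\Lambda^2 M = S^2 M / \Phi M$. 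The morphism $\psi$ then fits into a morphism of short exact sequences which is the identity on $M$ and on $\Lambda^2 M$, and the five lemma concludes.

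The main obstacle is checking that the surjection $\Lambda^2 M \twoheadrightarrow F_2/F_1$ is injective, i.e.\ that no further relations among degree-two products in $UM$ beyond $x^2 = \lambda(x)$ are imposed. This is a statement about the Adem-type defining relations of $UM$ admitting the expected normal form in low word length and is standard (cf.\ \cite{schwartz_book}); once this point is settled everything else is diagram chasing and the five lemma.
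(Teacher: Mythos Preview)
Your argument is correct and is exactly the unpacking of what the paper means by ``The construction of $UM$ implies the following'': the paper gives no further proof, relying on the presentation of $UM$ as $S^*(M)$ modulo the relations $x^2=Sq_0x$ and the standard fact that the associated graded for the word-length filtration is $\Lambda^*M$. Your identification of the one genuine point to check (injectivity of $\Lambda^2 M \to F_2/F_1$) and your deferral to the standard PBW-type description of $UM$ is appropriate and matches the paper's implicit reasoning.
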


\begin{rem}
\label{rem:cup_product}
Taking $M= F_1 H^* (\Omega^n X)$, one obtains the fundamental
morphism of short exact sequences:
\begin{eqnarray}
\label{eqn:filt_ss}
 \xymatrix{
\ \ \ &
F_1 H^* (\Omega^n X)
\ar[r]
\ar@{=}[d] 
&
F_2 \overline{U F_1 H^* (\Omega^n X)} 
\ar[r]
\ar[d]^{\cup}
&
\Lambda^2 ( F_1 H^* (\Omega^n X)) 
\ar[d]^{\overline{\cup}}
\\
&
F_1 H^* (\Omega^n X)
\ar[r]
&
F_2 H^* (\Omega^n X)
\ar[r]
&
F_2 H^* (\Omega^n X)/ F_1 H^* (\Omega^n X).
}
\end{eqnarray}

The identification of $\overline{\cup}$ in terms of the structure of the
spectral sequence will be important in Section \ref{sect:main}.
\end{rem}

\section{Cohomology of extended powers at $p=2$}
\label{sect:cohomEP}

Fix an integer $n \geq 1$ and a spectrum $Y$. In \cite[Section
3]{Kuhn_nonrealization}, Kuhn
 describes the mod $2$ cohomology of the extended powers $H^ *(D_{n,j}
Y)$;  we follow {\em loc.
cit.} in considering only spectra with $H^* (Y)$ bounded below and of finite type.  

The structure of $H^ *(D_{n,\bullet}
Y)$ is  determined (see \cite[Theorem 3.14]{Kuhn_nonrealization}) in
terms of the following morphisms:

\begin{enumerate}
 \item 
The  product \cite[Definition 3.3]{Kuhn_nonrealization} 
\[
 \star : H^* (D_{n,i} Y) \otimes H^*(D_{n,j} Y) \rightarrow H^* (D_{n,i+j} Y),  
\]
which  is a morphism of $\cala$-modules and induces a  commutative
(bi)graded algebra structure on  $H^ *(D_{n,\bullet} Y)$.
\item 
For  $1 \leq j \in \nat$, the dual Browder
operation \cite[Definition 3.5]{Kuhn_nonrealization}
\[
 L_{n-1} : H^{*} (Y)^{\otimes j} \rightarrow H^* (\Sigma^{(1-j)(n-1)}D_{n,j}  
Y),
\]
which is $\cala$-linear.
\item 
The  dual Dyer-Lashof operations (for $r \geq 0$)  
\cite[Definition 3.2]{Kuhn_nonrealization}
\[
 \ddl _r : H^d (D_{n,j} Y) \rightarrow H^ {2d+r} (D_{n,2j} Y).
\]
The operation $\ddl_r$ is trivial for $r\geq n$  \cite[Proposition 
3.8]{Kuhn_nonrealization}.
\end{enumerate}

\begin{rem}
The dual Dyer-Lashof operations $\ddl_r$ are not $\cala$-linear, but  satisfy
Nishida relations. 
Moreover, the operation $\ddl_0$ is not $\field$-linear; the default of
linearity  is given by the interaction with the $\star$-product:
\[
 \ddl_0 (x+y) = \ddl _0 (x) + \ddl_0 (y) + x \star y.
\]
Thus $\ddl_0$ behaves like a divided square operation.
\end{rem}

\begin{lem} 
For $1 \leq j \in \nat$, the $\star$-product induces a morphism of
$\cala$-modules:
  \[
  \star:  \Lambda^2 H^* (D_{n,i}Y) \rightarrow H^* (D_{n,2i} Y).  
  \] 
\end{lem}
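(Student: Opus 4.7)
The plan is to factor the $\star$-product successively through the symmetric and exterior quotients, using the Frobenius short exact sequence of Lemma \ref{lem:U_ses_Frobenius}. Writing $M := H^*(D_{n,i}Y)$, the $\star$-product restricts by hypothesis to an $\cala$-linear morphism $M \otimes M \to H^*(D_{n,2i}Y)$. Since $H^*(D_{n,\bullet}Y)$ is a commutative bigraded algebra (with no sign subtleties at $p=2$), this morphism is symmetric in its arguments and hence factors, by the universal property of the coinvariants, through an $\cala$-linear map $\star : S^2 M \to H^*(D_{n,2i}Y)$.

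By Lemma \ref{lem:U_ses_Frobenius}, to factor this further through $\Lambda^2 M$ it suffices to show that the composite vanishes on the image of the inclusion $\Phi M \hookrightarrow S^2 M$. This image is concretely spanned by the classes $[x \otimes x]$ for $x \in M$ (which is the standard description of the Frobenius image in $S^2$ at $p=2$, underlying the construction of that short exact sequence). The desired vanishing thus reduces to the pointwise identity $x \star x = 0$ for all $x \in M$.

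This last identity is immediate from the failure of $\field$-linearity of the divided square $\ddl_0$ recorded in the Remark following its definition: applying
\[
\ddl_0(x+y) = \ddl_0(x) + \ddl_0(y) + x \star y
\]
with $y = x$, and using that $x+x = 0$ in characteristic two, yields $0 = \ddl_0(0) = 2 \ddl_0(x) + x \star x = x \star x$. The resulting map $\star : \Lambda^2 M \to H^*(D_{n,2i}Y)$ inherits $\cala$-linearity from the quotient $S^2 M \twoheadrightarrow \Lambda^2 M$, completing the argument. There is no real obstacle here; the only point requiring any care is the identification of $\Phi M \subset S^2 M$ at $p=2$ with the submodule of squares, which is built into Lemma \ref{lem:U_ses_Frobenius}.
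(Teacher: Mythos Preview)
Your proof is correct and essentially follows the same route as the paper, which simply cites \cite[Proposition 3.11(iii)]{Kuhn_nonrealization} for the relation $x \star x = 0$. You have unpacked this relation from the defect-of-linearity formula for $\ddl_0$ stated in the paper, which is a perfectly valid and more self-contained way to reach the same conclusion.
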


\begin{proof}
 Follows from \cite[Proposition 3.11 (iii)]{Kuhn_nonrealization}.
\end{proof}

\begin{lem}
\label{lem:images_ddl}
 For $1 \leq j \in \nat$ and an integer $l>0$, the sub vector space of $H^*
(D_{n,2j} Y) $ generated by the images of
\[
 \ddl _r : H^* (D_{n,j} Y) \rightarrow H^ {2*+r} (D_{n,2j} Y)
\]
($r \geq l$) is a sub $\cala$-module $\Big(\sum_{r \geq l} 
\mathrm{image}(\ddl_r)\Big)$
of $H^* (D_{n,2j} Y) $.
\end{lem}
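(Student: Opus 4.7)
The plan rests on the Nishida relations for the dual Dyer-Lashof operations, once one has observed that the non-linearity of $\ddl_0$ is irrelevant here. Since $l \geq 1$, every operation $\ddl_r$ under consideration is genuinely $\field_2$-linear: the only deviation from linearity is the $\ddl_0$ identity $\ddl_0(x+y) = \ddl_0(x) + \ddl_0(y) + x \star y$ recorded above, which does not affect $r \geq 1$. Consequently the set
\[
V_l \ := \ \sum_{r \geq l} \mathrm{image}(\ddl_r) \ \subset \ H^*(D_{n,2j} Y)
\]
is already a $\field_2$-vector subspace closed under finite sums, not merely a set-theoretic union of images.

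It remains to show stability of $V_l$ under the action of each $Sq^a \in \cala$. For this I would invoke the Nishida relations satisfied by the $\ddl_r$, which are part of the structural package of \cite[Section 3]{Kuhn_nonrealization} and are alluded to in the remark preceding the lemma statement. These express $Sq^a \ddl_r(x)$, for $x \in H^*(D_{n,j}Y)$ and $r \geq l$, as a $\field_2$-linear combination of terms of the form $\ddl_{r'}\bigl(Sq^c x\bigr)$, with $r'$ given by an explicit formula of type $r' = r + a - 2c$ and the admissible range of $c$ cut out by a vanishing binomial-coefficient constraint that forces $a - 2c \geq 0$. The numerical content of the argument is then the observation that in every surviving summand $r' = r + a - 2c \geq r \geq l$, so $Sq^a \ddl_r(x) \in V_l$. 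Linearity in $x$ propagates the inclusion to arbitrary finite $\field_2$-combinations, giving $\cala$-stability of $V_l$.

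The main obstacle is purely bookkeeping: one must confirm the precise statement of the Nishida relations in Kuhn's conventions (in particular the direction in which the Dyer-Lashof index moves) and verify the inequality $r' \geq r$ via the binomial constraint. If the Nishida formula produces any anomalous term — for example a $\star$-product correction, or a term involving $\ddl_0$ of a composite — then that contribution would need to be handled separately, either by closure of $V_l$ under the $\star$-product with appropriate classes or by showing directly that the extra summand again lies in $V_l$. Given the degree match $\ddl_r : H^d \to H^{2d+r}$, no such correction is expected in the range $r \geq l \geq 1$, and the claim reduces to the numerical check above.
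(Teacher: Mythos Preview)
Your approach is correct and is essentially the paper's own argument: the paper's proof is the one-line citation ``Follows from the Nishida relation given in \cite[Proposition 3.1(ii)]{Kuhn_nonrealization},'' and your proposal unpacks precisely this, noting the linearity of $\ddl_r$ for $r\geq 1$ and that the Nishida relation $Sq^a\ddl_r = \sum_c \binom{\cdots}{a-2c}\,\ddl_{r+a-2c}\,Sq^c$ forces $r' = r+a-2c \geq r \geq l$. Your hedging about possible anomalous $\star$-product corrections is unnecessary: in Kuhn's Proposition~3.1(ii) the relation for $r\geq 1$ is clean, with no extra terms, so the bookkeeping you flag as the ``main obstacle'' resolves exactly as you predict.
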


\begin{proof}
 Follows from the Nishida relation given in \cite[Proposition
3.1(ii)]{Kuhn_nonrealization}.
\end{proof}

\begin{rem}
 The higher dual Dyer-Lashof operations $\ddl_r$, $r \geq n$ act trivially by
\cite[Proposition 3.8]{Kuhn_nonrealization}, hence the sum $\sum_{r \geq l}
\mathrm{image}(\ddl_r)$ is finite (and zero for
 $l \geq n$).
\end{rem}

\begin{prop}
\label{prop:filter_ddl}
  For $1 \leq j \in \nat$, the dual Dyer-Lashof operations induce $\cala$-linear
maps:
\begin{eqnarray*}
\overline{\ddl_0} &:& \Phi H^* (D_{n,j} Y) \rightarrow H^* (D_{n,2j} Y)/ \Big ( 
\Lambda^2
H^* (D_{n,j} Y) + \sum_{r \geq 1} \mathrm{image}(\ddl_r) \Big)
\\
\overline{\ddl_l} &:& \Sigma^l \Phi H^* (D_{n,j} Y) \hookrightarrow H^* 
(D_{n,2j} Y)/ \Big
( \sum_{r \geq l+1} \mathrm{image}(\ddl_r) \Big),
\end{eqnarray*}
where $l>0$.
\end{prop}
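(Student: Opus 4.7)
The overall strategy is to match both statements against the standard Steenrod action on a Frobenius twist, namely $Sq^{2i}\Phi x = \Phi(Sq^i x)$ and $Sq^{2i+1}\Phi x = 0$, by exploiting three ingredients from \cite[Section 3]{Kuhn_nonrealization}: the degree-doubling divided-square behaviour of $\ddl_0$ encoded in $\ddl_0(x+y) = \ddl_0(x) + \ddl_0(y) + x\star y$, the Nishida relations governing $Sq^a\ddl_r$, and Kuhn's structural description of $H^*(D_{n,\bullet}Y)$.

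For $\overline{\ddl_0}$, the divided-square identity shows that $\ddl_0$ becomes additive once one mods out by the image of $\star : \Lambda^2 H^*(D_{n,j}Y) \to H^*(D_{n,2j}Y)$; since the only scalars in $\field_2$ are $0$ and $1$, this promotes to $\field_2$-linearity, and as $\ddl_0$ doubles degrees it descends to a degree-zero map out of $\Phi H^*(D_{n,j}Y)$. For $\cala$-linearity, I would apply \cite[Proposition 3.1(ii)]{Kuhn_nonrealization} to expand $Sq^a\ddl_0(x)$ as a sum of terms of the form $\ddl_{r'}(Sq^t x)$ with $r' = 2t - a \geq 0$. A term with $r'=0$ can occur only when $a = 2t$ is even, in which case it is precisely $\ddl_0(Sq^{a/2}x)$, matching $\Phi(Sq^i x)$. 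All other summands have $r' \geq 1$, hence lie in the $\cala$-submodule $\sum_{r\geq 1}\mathrm{image}(\ddl_r)$ of Lemma \ref{lem:images_ddl}. The induced map on the quotient is therefore $\cala$-linear and factors through $\Phi H^*(D_{n,j}Y)$.

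For $\overline{\ddl_l}$ with $l > 0$, the operation $\ddl_l$ is already $\field_2$-linear (no divided-square correction), and the degree shift $|\ddl_l(x)| = 2|x|+l$ is exactly what is needed to match $\Sigma^l \Phi H^*(D_{n,j}Y)$. The same Nishida analysis now writes $Sq^a\ddl_l(x)$ as a sum of $\ddl_{l-a+2t}(Sq^t x)$; the term with $r' = l$ appears only when $a = 2t$, giving $\ddl_l(Sq^{a/2}x)$, and no such term appears for $a$ odd. All correction terms have $r' > l$, hence vanish in $H^*(D_{n,2j}Y)/\sum_{r \geq l+1}\mathrm{image}(\ddl_r)$, which is an $\cala$-submodule again by Lemma \ref{lem:images_ddl}. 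This gives the advertised $\cala$-linear factorization. For the injectivity claim, I would invoke Kuhn's structure theorem \cite[Theorem 3.14]{Kuhn_nonrealization}: the operations $\ddl_0, \ddl_1, \ldots, \ddl_{n-1}$ applied to a basis of $H^*(D_{n,j}Y)$ (together with $\star$-products and Browder operations on lower-weight pieces) contribute freely to $H^*(D_{n,2j}Y)$, so a nonzero class $\ddl_l(x)$ cannot be expressed modulo $\sum_{r > l}\mathrm{image}(\ddl_r)$ as a combination of strictly higher Dyer-Lashof images.

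The hardest step I anticipate is the bookkeeping with the Nishida relations, specifically verifying that the ``leading'' $\ddl_{r'}$ with $r' = l$ really has coefficient $1$ and that all remaining summands genuinely land in $\sum_{r>l}\mathrm{image}(\ddl_r)$; and, for the second claim, extracting from \cite[Theorem 3.14]{Kuhn_nonrealization} the precise filtration-by-$r$ freeness statement on the subspace of $H^*(D_{n,2j}Y)$ generated by the $\ddl_r$-images needed to conclude injectivity of $\overline{\ddl_l}$.
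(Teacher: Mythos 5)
Your proposal follows essentially the same route as the paper's (very terse) proof: $\field$-linearity of $\ddl_0$ after quotienting by $\Lambda^2$, $\cala$-linearity from the Nishida relations after further quotienting by the higher dual Dyer--Lashof images (which are sub-$\cala$-modules by Lemma \ref{lem:images_ddl}), and the analogous but simpler argument for $\ddl_l$ with $l>0$. One point of caution: the paper invokes \cite[Proposition 3.15(i)]{Kuhn_nonrealization} rather than \cite[Proposition 3.1(ii)]{Kuhn_nonrealization} specifically for the $\ddl_0$ case. This is not an arbitrary difference — precisely because $\ddl_0$ is only a divided-square operation (additive only up to $x\star y$), its Nishida relation has extra correction terms involving $\star$-products that the general relation for $\ddl_r$ ($r>0$) does not exhibit; 3.15(i) is the version of the Nishida relation phrased modulo exactly those corrections, which is what makes the leading-term bookkeeping you describe actually close up. If you simply apply the generic relation 3.1(ii) at $r=0$ you should verify that the resulting error terms are accounted for by the $\Lambda^2$ summand as well as by $\sum_{r\geq 1}\mathrm{image}(\ddl_r)$. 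You do go beyond the paper's stated proof in spelling out the injectivity of $\overline{\ddl_l}$ via \cite[Theorem 3.14]{Kuhn_nonrealization}; the paper leaves this implicit, and your reading is the correct way to extract it.
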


\begin{proof}
 By \cite[Proposition 3.11(i)]{Kuhn_nonrealization}, the operation $\ddl_0$
becomes $\field$-linear after the passage to the quotient by 
the submodule $\Lambda^2 H^* (D_{n,j} Y)$. Moreover, the Nishida relation for
$\ddl_0$ \cite[Proposition 3.15(i)]{Kuhn_nonrealization} establishes
the $\cala$-linearity, after passing to the additional quotient by the image of
the higher dual Dyer-Lashof operations, which is a sub $\cala$-module by Lemma
\ref{lem:images_ddl}.

The argument for $\ddl_l$ ($l>0$) is similar, using the Nishida relation
\cite[Proposition 3.1(ii)]{Kuhn_nonrealization}.
\end{proof}

\begin{thm}
\label{thm:cohomEP_good}
Suppose that $Y$ is a spectrum such that $H^* (Y) \in \obj \alunst$ is almost
unstable and is of finite type. 
Then for $1 \leq j \in \nat$:
\begin{enumerate}
 \item 
$H^* (D_{n,j} Y) \in \obj \alunst$ is almost unstable;
\item 
if $H^* (Y)$ is a good almost unstable module, then $H^* (D_{n,j} Y)$ is good
and 
\begin{enumerate}
\item
$
 \rho_0 (H^* (D_{n,j}Y))\cong \Gamma^j \rho_0 (H^* (Y))
$, $n \geq 2$; 
\item 
$\rho_0 (H^* (D_{1,j}Y))\cong T^j \rho_0 (H^* (Y)).
$
\end{enumerate}
\end{enumerate}
\end{thm}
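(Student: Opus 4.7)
The strategy is induction on $j$, with the base case $j=1$ immediate since $D_{n,1} Y = Y$. For the inductive step I would appeal to the structural description of $H^*(D_{n,\bullet} Y)$ from \cite[Theorem 3.14]{Kuhn_nonrealization}: the $\star$-product together with the dual Dyer--Lashof operations $\ddl_r$ ($0 \le r < n$) equip $H^*(D_{n,j} Y)$ with a natural finite filtration whose subquotients are built from (i) images of $H^*(D_{n,i} Y) \otimes H^*(D_{n,j-i} Y) \to H^*(D_{n,j} Y)$ for $0 < i < j$, arising from $\star$; (ii) for $j$ even, pieces of the form $\Sigma^l \Phi H^*(D_{n, j/2} Y)$ for $0 \le l < n$, arising from $\overline{\ddl_l}$ as in Proposition \ref{prop:filter_ddl}; and (iii) the $\Lambda^2 H^*(D_{n, j/2} Y)$ piece from squaring under $\star$. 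Closure of $\alunst$ under tensor products, $\Sigma$, quotients and extensions (Proposition \ref{prop:alunst}), together with stability of $\alunst$ under $\Phi$ (Proposition \ref{prop:alunst_stable_Phi_R}), then upgrades the inductive hypothesis on smaller $j'$ to $H^*(D_{n,j} Y) \in \alunst$. Goodness propagates through the same filtration by Proposition \ref{prop:good_alunst_stability}.

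For the $\rho_0$-computation when $n \ge 2$, note that $\Sigma^l \Phi(-) \in \alnil_1$ for every $l \ge 1$, combining Proposition \ref{prop:alunst_stable_Phi_R} with Proposition \ref{prop:alnil_serre_sigma}; these layers are therefore killed by $\rho_0$. In the prototype step $j=2$, what survives on the $\rho_0$-level is an extension
\[
0 \to \Lambda^2 \rho_0 H^*(Y) \to \rho_0 H^*(D_{n,2} Y) \to \Phi \rho_0 H^*(Y) \to 0,
\]
which I would identify with the canonical presentation of $\Gamma^2 \rho_0 H^*(Y)$ from Lemma \ref{lem:U_ses_Frobenius}. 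The relation $\ddl_0(x+y) = \ddl_0(x) + \ddl_0(y) + x \star y$ is exactly the divided-square axiom that determines $\Gamma^2$, fixing the extension class. Iterating the filtration and using $\star$-multiplicativity (via the universal multiplicative property of $\Gamma^\bullet$) then yields $\rho_0 H^*(D_{n,j} Y) \cong \Gamma^j \rho_0 H^*(Y)$. In the case $n=1$, all $\ddl_r$ with $r \ge 1$ vanish by \cite[Proposition 3.8]{Kuhn_nonrealization}; moreover $\lc(1,j) \simeq \symm_j$ as a free $\symm_j$-space forces $D_{1,j} Y \simeq Y^{\wedge j}$, so $H^*(D_{1,j} Y) \cong H^*(Y)^{\otimes j}$ and $\rho_0 H^*(D_{1,j} Y) \cong T^j \rho_0 H^*(Y)$ (using that $\otimes$ respects the nilpotent filtration by $\otimes : \nil_s \times \nil_t \to \nil_{s+t}$).

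The main obstacle I anticipate is the extension-identification in the induction: knowing the composition factors of $\rho_0 H^*(D_{n,j} Y)$ is insufficient, since multiple non-isomorphic extensions of $\Phi^{a_1} \Lambda^{a_2} \cdots$ pieces exist in $\f$. One must construct a natural transformation $\Gamma^j \rho_0 H^*(Y) \to \rho_0 H^*(D_{n,j} Y)$ from the $\cala$-algebra structure on $H^*(D_{n,\bullet} Y)$, with $\ddl_0$ playing the role of the divided square, and verify it is an isomorphism by compatibility with the filtration layer by layer. This is essentially a bookkeeping exercise in the operadic structure of Kuhn's description, but requires careful attention to naturality and to the Nishida relations ensuring $\cala$-linearity modulo lower filtration.
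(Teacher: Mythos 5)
Your high-level strategy matches the paper's: use Kuhn's structural description of $H^*(D_{n,\bullet} Y)$, exploit the Serre-subcategory and closure properties of $\alunst$ and $\alnil_\bullet$, and observe that every contribution except iterates of $\ddl_0$ lands in $\alnil_1$ and is therefore invisible to $\rho_0$. However, your proposed filtration for the inductive step has a genuine gap: it omits the dual Browder operations $L_{n-1}$ entirely. Kuhn's Theorem 3.14 exhibits $H^*(D_{n,\bullet} Y)$ as generated under $\star$ by classes of the form $\ddl_{r_1}\cdots\ddl_{r_t} L_{n-1}(x_1 \otimes \cdots \otimes x_k)$; the $L_{n-1}$ on $k$ tensor factors is the only source of new indecomposables. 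For $j$ odd and $j>1$ (say $j=3$), your layers (i)--(iii) account for nothing indecomposable, yet $L_{n-1}(x_1\otimes x_2\otimes x_3)$ is a generator. Even at $j=2$ your list misses the summand $\Sigma^{n-1}S^2 H^*(Y)$ carried by $L_{n-1}$ (see Lemma \ref{lem:nat_incl} and the model $\cale_n$). The paper closes this hole by the same device you use for $\ddl_l$ with $l>0$: the $\cala$-linear map $L_{n-1}\colon \Sigma^{(k-1)(n-1)} H^*(Y)^{\otimes k} \to H^*(D_{n,k}Y)$ has source in $\alnil_{(k-1)(n-1)}$ by Proposition \ref{prop:alnil_serre_sigma}, and $(k-1)(n-1) > 0$ for $k>1$ and $n\ge 2$, so its image lies in $\alnil_1$.

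On the $\rho_0$-identification, your worry about extension ambiguity is resolved more directly in the paper, and in the opposite direction from what you propose: rather than building a map $\Gamma^j\rho_0 H^*(Y) \to \rho_0 H^*(D_{n,j}Y)$, the paper exhibits a surjection $H^*(D_{n,j}Y) \twoheadrightarrow \Gamma^j(\rho_0 H^*(Y))$. Because $\Gamma^j$ of a reduced unstable module is reduced, everything in $\alnil_1$ dies under such a map automatically (Corollary \ref{cor:alnil_1_red}), so one only needs to check the map is well-defined and $\cala$-linear; the Nishida relation of \cite[Proposition 3.15(i)]{Kuhn_nonrealization} gives precisely the $\cala$-linearity of $\ddl_0$ modulo the higher $\ddl_l$. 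This sidesteps the extension bookkeeping you flag as the main obstacle. Your $n=1$ reduction via $D_{1,j}Y \simeq Y^{\wedge j}$ agrees with the paper's remark that this case is straightforward.
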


\begin{proof}
The case $n=1$ is straightforward (compare \cite[Remark
2.1]{Kuhn_nonrealization}), hence suppose that $n \geq 2$. 

The proof that the modules are almost unstable is based on \cite[Theorem
3.14]{Kuhn_nonrealization}, which states that, 
$H^* (D_{n,\bullet} X) $ is generated as a (bi)graded
commutative 
algebra (under the $\star$-product) by elements of the form 
\[
 \ddl _{r_1} \ldots \ddl_{r_t} L_{n-1} (x_1 \otimes \ldots \otimes  x_k) \in H^*
(D_{n,2^t k}Y),
\]
subject to the relations given in \cite[Section 3.3]{Kuhn_nonrealization}.

The category $\alunst$ is a Serre subcategory of $\amod$ and is stable under
$\otimes$, thus 
an increasing induction upon $j$ implies that it is sufficient to work modulo
$\star$-decomposables.
Hence one is reduced to considering $\star$-indecomposables of the above form
(note that Adem-type 
relations intervene in considering the words in the dual Dyer-Lashof operations,
by \cite[Proposition 3.13]{Kuhn_nonrealization}).
Moreover, since the dual Dyer-Lashof operations double the $j$ degree and
$\ddl_r$ is trivial for $r\geq n$, in a given $j$-degree, 
there are only finitely many words $\ddl _{r_1} \ldots \ddl_{r_t}$ which arise.

For $k$ tensor factors, the morphism $L_{n-1}$ is $\cala$-linear:
\[
L_{n-1} : \Sigma^{(k-1)(n-1)} H^* (Y)^{\otimes k} \rightarrow H^* (D_{n,k} Y). 
\]
The hypothesis implies that $\Sigma^{(k-1)(n-1)} H^* (Y)^{\otimes k}$ lies in
$\alnil_{(k-1)(n-1)}$ (in particular is almost unstable), 
hence so does the image of $L_{n-1}$, by Proposition
\ref{prop:alnil_serre_sigma}. For $k>1$, 
$(k-1) (n-1)>0$, whereas for $k=1$, $L_{n-1}$ is an isomorphism.  

A straightforward filtration argument based on Proposition \ref{prop:filter_ddl}
allows words in dual Dyer-Lashof operations to be treated.
Namely, up to higher terms, the image of an operation $\ddl_r$ is a quotient of
the functor $\Sigma^r \Phi$ and, by Proposition \ref{prop:alunst_stable_Phi_R},
the functor $\Sigma^r \Phi$ induces:
\[
 \Sigma^r \Phi : \alnil_i \rightarrow \alnil_{2i+r}.
\]
Since $\alnil_{2i+r}$ is a Serre subcategory, up to filtration, this exhibits
the image under $\ddl_r$ of an element of $\alnil_i$ as lying in 
$\alnil_{2i+r}$. 

Putting these facts together, one concludes that $H^* (D_{n,j} Y)$ is almost
unstable. Moreover, the argument shows that the 
only possible contributions not in $\alnil_1$ arise from
$\star$-products of terms from the image of iterates of $\ddl_0$. 
If $H^* (Y)$ is good almost unstable, then there is an associated short exact
sequence in $\amod$:
\[
 0 
\rightarrow 
(H^* Y)' 
\rightarrow 
H^* Y
\rightarrow 
\rho_0 (H^* Y)
\rightarrow 
0.
\]
Any terms arising from $(H^* Y)'$ also lie in $\alnil_1$. Hence, to prove the
result, it suffices to show that the projection 
$H^* Y \twoheadrightarrow \rho_0 (H^* Y)$ induces a surjection (recall $n \geq 
2$, by hypothesis) 
\[
 H^* (D_{n,j} Y) \twoheadrightarrow \Gamma^j (\rho_0 (H^* Y)), 
\]
since $\Gamma^j (\rho_0 (H^* Y))$ is a reduced unstable module. This is clear as
graded vector spaces; to check that the morphism 
is $\cala$-linear, use \cite[Proposition 3.15(i)]{Kuhn_nonrealization}, which
shows that the action of the Steenrod squares 
on $\ddl_0$ is correct modulo the higher dual Dyer-Lashof operations.
\end{proof}

\begin{cor}
\label{cor:d1_niln}
For $1 \leq n \in \nat$ and  $X$  an $n$-connected space such that $H^*(X) \in 
\nil_n$ and is of finite type,  the
spectral sequence calculating $H^* (\Omega^n X)$ associated 
to the Arone-Goodwillie tower is good almost unstable.

In particular, the morphism $d_1$ from the $(-2)$-column to the $(-1)$-column
induces
\begin{eqnarray*}
 \Gamma^2 (\rho_n H^* (X)) 
\rightarrow 
\rho_{n+1} H^* (X) 
& \ \ & n \geq 2\\ 
T^2 (\rho_1 H^* (X)) 
\rightarrow 
\rho_2 H^* (X)
&&
n=1.
\end{eqnarray*}
\end{cor}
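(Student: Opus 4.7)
The plan is to deduce the corollary by combining Theorem \ref{thm:cohomEP_good} on the cohomology of extended powers with the general machinery of Corollary \ref{cor:good_auss_d1} for good almost unstable spectral sequences; no further topological input is needed.

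First I would verify that the spectrum $Y := \Sigma^{-n}X$ has good almost unstable cohomology of finite type. Indeed, $H^*(Y) = \Sigma^{-n} H^*(X)$ is of the form $\Sigma^{-n} N$ with $N \in \nil_n$ and $N$ of finite type. By the example following Lemma \ref{lem:good_equivalent}, any such desuspension is a good almost unstable module whose reduced quotient $\rho_0 H^*(Y)$ is canonically $\rho_n H^*(X)$. The finite type hypothesis on $H^*(X)$ transfers to $H^*(Y)$, so the hypotheses of Theorem \ref{thm:cohomEP_good} are met.

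Next I would apply Theorem \ref{thm:cohomEP_good} to this $Y$ to conclude that each $H^*(D_{n,j}\Sigma^{-n}X)$ is good almost unstable, with reduced quotient
\[
\rho_0 H^*(D_{n,j}\Sigma^{-n}X) \cong \begin{cases} \Gamma^j \rho_n H^*(X), & n \geq 2, \\ T^j \rho_1 H^*(X), & n = 1. \end{cases}
\]
Since $\Sigma^{-j} E_1^{-j,*} = H^*(D_{n,j}\Sigma^{-n}X)$ by construction of the spectral sequence (Section \ref{sect:tower}), the spectral sequence is good almost unstable, establishing the first assertion.

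For the $d_1$ statement, observe that $\lc(n,1)$ is contractible and $\symm_1$ is trivial, so $D_{n,1}Z \simeq Z$ for any spectrum $Z$. Hence $E_1^{-1,*} = H^*(\Sigma\Sigma^{-n}X) = \Sigma(\Sigma^{-n} H^*(X))$, which exhibits the $(-1)$-column in the form $\Sigma(\Sigma^{-t}M)$ required by Corollary \ref{cor:good_auss_d1}, with $t = n$ and $M = H^*(X) \in \nil_n$. That corollary then produces a morphism
\[
d_1 : \rho_0(\Sigma^{-2} E_1^{-2,*}) \rightarrow \rho_{n+1} H^*(X),
\]
and substituting the identification of the source computed in the previous step gives the two displayed morphisms. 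The only point requiring any care is bookkeeping of the desuspensions (matching the internal $\alnil$-grading with the spectral sequence's second-quadrant indexing); this is the mildly delicate, but routine, obstacle in the argument.
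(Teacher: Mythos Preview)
Your argument is correct and is essentially the paper's own proof, spelled out in detail: the paper merely states that the final assertion follows from Corollary \ref{cor:good_auss_d1}, leaving implicit the verification (via Theorem \ref{thm:cohomEP_good} applied to $Y=\Sigma^{-n}X$) that the spectral sequence is good almost unstable. The only minor imprecision is the cross-reference: the example you invoke for $\rho_0(\Sigma^{-n}N)\cong\rho_n N$ appears after Notation \ref{nota:good_alunst}, not immediately after Lemma \ref{lem:good_equivalent}.
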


\begin{proof}
 The final statement follows from Corollary \ref{cor:good_auss_d1}.
\end{proof}

\section{Algebraic Models}
\label{sect:alg_model}

In the case of the second extended power, it is possible to give explicit 
algebraic models 
for their cohomology. For current purposes, this is not strictly necessary; it
is included 
since it makes the results of Section \ref{sect:cohomEP}  much more explicit.

\subsection{The case of the second extended power}

The calculation of $H^* (D_{\infty,2} Y) $  (see \cite{KMcC}, where homology is 
used) is a stable version of the
calculation of the quadratic construction \cite{Milgram,GLZ,HLS2}. Here $H^* 
(D_{n,2} Y) $ is considered for finite $n$; this brings the dual
Browder operations into the picture.

\begin{lem}
\label{lem:nat_incl}
 For $Y$ a spectrum with $H^* (Y)$ bounded below and of finite type,
 \begin{enumerate}
  \item 
  the $\star$-product induces a monomorphism of $\cala$-modules:
  \[
  \star:  \Lambda^2 H^* (Y) \hookrightarrow H^* (D_{n,2} Y);  
  \]
\item 
the dual Browder operation induces a monomorphism of $\cala$-modules:
\[
L_{n-1} : \Sigma^{n-1} S^2 H^* (Y) \hookrightarrow H^* (D_{n,2} Y);
\]
\item 
the sum of these induces a monomorphism of $\cala$-modules:
\[
 \star \amalg L_{n-1} :  \Lambda^2 H^* (Y) \oplus  \Sigma^{n-1} S^2 H^* (Y)
\hookrightarrow H^* (D_{n,2} Y).
\]
 \end{enumerate}
\end{lem}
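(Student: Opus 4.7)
The plan is to identify $D_{n,2}Y\simeq S^{n-1}_+\wedge_{\symm_2} Y^{\wedge 2}$, using the equivalence $\lc(n,2)\simeq S^{n-1}$ with free antipodal $\symm_2$-action, and to filter by the standard $\symm_2$-equivariant cell structure of $S^{n-1}$. This yields a filtration of $D_{n,2}Y$ with associated-graded pieces $\Sigma^k Y^{\wedge 2}$ for $k=0,1,\ldots,n-1$. The induced cohomological spectral sequence has $E_1^{k,*}=\Sigma^k T^2 H^*(Y)$ and $d_1$-differentials given uniformly by $1+\tau$ (where $\tau$ is the swap on $T^2$), since in characteristic $2$ the $\symm_2$-equivariant cellular cochain complex of $S^{n-1}$ is the truncated periodic $\field_2[\symm_2]$-resolution of $\field_2$.

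In characteristic $2$, $\ker(1+\tau)=\Gamma^2 H^*(Y)$ and $\mathrm{coker}(1+\tau)=S^2 H^*(Y)$, while $\ker(1+\tau)/\mathrm{image}(1+\tau)\cong \Phi H^*(Y)$ for the interior columns. Hence $E_2^{0,*}=\Gamma^2 H^*(Y)$ and $E_2^{n-1,*}=\Sigma^{n-1}S^2 H^*(Y)$. Using Kuhn's description \cite[Theorem 3.14]{Kuhn_nonrealization}, the $\star$-product is identified with the inclusion $\Lambda^2 H^*(Y)\hookrightarrow \Gamma^2 H^*(Y)$ (valid because $x\star x=0$ follows in characteristic $2$ from the relation $\ddl_0(x+y)=\ddl_0 x+\ddl_0 y+x\star y$, so $\star$ factors through $\Lambda^2$, and Lemma \ref{lem:U_ses_Frobenius} gives $\Lambda^2\hookrightarrow\Gamma^2$); and the dual Browder operation $L_{n-1}$ is identified with the projection onto the top piece $\Sigma^{n-1}S^2 H^*(Y)$ (it is symmetric in characteristic $2$, so factors through $S^2$).

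Given these identifications, (1) and (2) follow since $\Lambda^2 H^*(Y)$ embeds into the bottom-filtration $E_\infty^{0,*}$ and $\Sigma^{n-1}S^2 H^*(Y)=E_\infty^{n-1,*}$ is the top-filtration quotient, both lifting to genuine submodules or subquotients of $H^*(D_{n,2}Y)$. For (3): if $\star(x)+L_{n-1}(y)=0$, then projecting to the top associated graded $E_\infty^{n-1,*}$ kills $\star(x)$ (which lies in strictly lower filtration for $n\geq 2$) and gives $L_{n-1}(y)=0$, hence $y=0$ by (2); then $\star(x)=0$ and (1) forces $x=0$. The degenerate case $n=1$ needs separate treatment, where $D_{1,2}Y\simeq Y^{\wedge 2}$ and the claim can be verified directly using the $\cala$-linearity of the relevant maps.

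The main obstacle is rigorously establishing that the spectral sequence collapses at $E_2$, which is needed to upgrade the $E_\infty$-identifications to actual submodules and subquotients of $H^*(D_{n,2}Y)$. I would handle this by first treating the universal case (a wedge of sphere spectra, where the target is computable and forces collapse) and then leveraging naturality together with the finite-type, bounded-below hypotheses on $H^*(Y)$. A subsidiary concern is ensuring all identifications are $\cala$-linear, but this follows from the constructions in \cite[Section 3]{Kuhn_nonrealization}.
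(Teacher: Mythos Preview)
Your spectral-sequence approach is a genuine alternative to the paper's proof, which consists of a two-line citation: the maps are supplied by \cite[Propositions~3.11(iii) and 3.12]{Kuhn_nonrealization}, and injectivity is read off from the explicit basis of $H^*(D_{n,\bullet}Y)$ in \cite[Theorem~3.14]{Kuhn_nonrealization}. Your route instead reconstructs, from the skeletal filtration of $S^{n-1}$, what amounts to the algebraic model $\cale_n$ of Section~\ref{sect:alg_model}; this is conceptually attractive and makes the short exact sequence of Proposition~\ref{prop:functor_E} transparent.

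Two points need correcting. First, you invoke \cite[Theorem~3.14]{Kuhn_nonrealization} to identify where $\star$ lands in the filtration; but that theorem already gives the full basis description from which the lemma is immediate, so this step makes the argument circular. The identification you actually need---that $\star$ composed with the quotient $H^*(D_{n,2}Y)\twoheadrightarrow E_\infty^{0,*}\cong\Gamma^2 H^*(Y)$ is the inclusion $\Lambda^2\hookrightarrow\Gamma^2$---follows directly without Theorem~3.14: the $\star$-product is the transfer, the quotient to $E_\infty^{0,*}$ factors through restriction along $F_0=Y^{\wedge 2}\hookrightarrow D_{n,2}Y$, and transfer followed by restriction is $1+\tau$ on $T^2 H^*(Y)$, whose induced map $\Lambda^2\to\Gamma^2$ is precisely the natural inclusion.

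Second, your argument for (3) has the filtration direction reversed. With the skeletal filtration, $E_\infty^{0,*}=\Gamma^2 H^*(Y)$ is a \emph{quotient} of $H^*(D_{n,2}Y)$ and $E_\infty^{n-1,*}=\Sigma^{n-1}S^2 H^*(Y)$ is a \emph{submodule} (indeed it is the image of $L_{n-1}$, which arises as the edge map from the top cofibre $\Sigma^{n-1}Y^{\wedge 2}$). There is therefore no ``projection onto the top piece''. The corrected argument for (3) is: project to the quotient $E_\infty^{0,*}$; this annihilates $L_{n-1}(y)$ (which lies in filtration $\geq 1$, hence $\geq n-1$) and sends $\star(x)$ to its image under $\Lambda^2\hookrightarrow\Gamma^2$; thus $x=0$, whence $L_{n-1}(y)=0$ and $y=0$ by (2). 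With these fixes (and the collapse established as you outline), your approach goes through.
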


\begin{proof}
 The morphisms are provided respectively by \cite[Proposition
3.11(iii)]{Kuhn_nonrealization} and \cite[Proposition
3.12]{Kuhn_nonrealization}. The injectivity is a consequence of 
 \cite[Theorem 3.14]{Kuhn_nonrealization}.
\end{proof}

Recall from Section \ref{sect:alg_prelim} that, for $M \in \obj \amod$ and $1
\leq n \in \nat$, there are natural morphisms:
\begin{eqnarray}
\label{eqn:pb_po}
 \xymatrix{
&
&
\Gamma^2 M 
\ar@{->>}[d]
\\
\Sigma^{n-1} \Phi M 
\ar@{^(->}[r]
\ar@{^(->}[d]
&
R_{1/n} M 
\ar@{->>}[r]
&
\Phi M
\\
\Sigma^{n-1} S^2 M .
}
\end{eqnarray}
Here the middle row is not in general a sequence (for $n=1$ the morphisms are
isomorphisms) and not in general exact (for $n \geq 3$).

\begin{defn}
For $1 \leq n \in \nat$ and $M \in \obj \amod$, let $\cale_n M$ denote the
$\cala$-module given by forming the pushout and pullback of diagram
(\ref{eqn:pb_po}).
\end{defn}

\begin{exam}
\label{exam:n=1}
 For $n=1$ and $M \in \obj \amod$, $\cale_1 M$ is naturally isomorphic to
$M^{\otimes 2}$. 
\end{exam}

\begin{prop}
\label{prop:functor_E}
 For $1 \leq n \in \nat$, the above construction defines
a functor $
 \cale_n : \amod \rightarrow \amod $
which restricts to $\cale_n : \unst \rightarrow \unst $.

For $M \in \obj \amod$, there is a natural short exact sequence:
  \[
   0 
   \rightarrow 
   \Lambda^2 M \oplus \Sigma^{n-1} S^2 M \rightarrow \cale_n M \rightarrow
R_{1/n-1}M  \rightarrow 0.
  \]
\end{prop}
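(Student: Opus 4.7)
I will realize $\cale_n M$ concretely as a pullback followed by a pushout and then read off the short exact sequence. First form the pullback $P := R_{1/n} M \times_{\Phi M} \Gamma^2 M$ along the two surjections $R_{1/n} M \twoheadrightarrow \Phi M$ and $\Gamma^2 M \twoheadrightarrow \Phi M$. Because the inclusion $\Sigma^{n-1} \Phi M \hookrightarrow R_{1/n} M$ is the bottom of the $u$-adic filtration, its composite with $R_{1/n} M \twoheadrightarrow \Phi M$ vanishes, and so it lifts canonically to $\Sigma^{n-1} \Phi M \hookrightarrow P$ with zero $\Gamma^2 M$-coordinate. Define $\cale_n M$ as the pushout of this inclusion along $\Sigma^{n-1} \Phi M \hookrightarrow \Sigma^{n-1} S^2 M$ from Lemma \ref{lem:U_ses_Frobenius}. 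Functoriality of $\cale_n$ on $\amod$ is then immediate from the functoriality of each ingredient and the naturality of all arrows in diagram (\ref{eqn:pb_po}); the restriction to $\unst$ follows because $\Gamma^2$, $\Phi$, $R_{1/n}$ and $S^2$ all preserve $\unst$ and $\unst$ is an abelian subcategory of $\amod$ closed under finite limits and finite colimits.

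For the short exact sequence, the pullback of two surjections yields
\[
 0 \to \Lambda^2 M \to P \to R_{1/n} M \to 0,
\]
with $\ker(\Gamma^2 M \twoheadrightarrow \Phi M) = \Lambda^2 M$ by Lemma \ref{lem:U_ses_Frobenius}. In this description $\Lambda^2 M$ embeds into $P$ with zero $R_{1/n}M$-coordinate, whereas the lifted copy of $\Sigma^{n-1} \Phi M$ embeds with zero $\Gamma^2 M$-coordinate, so these two submodules of $P$ intersect trivially. The defining pushout then produces
\[
 0 \to \Sigma^{n-1} S^2 M \to \cale_n M \to P/\Sigma^{n-1}\Phi M \to 0,
\]
and combining with the previous display together with the identification $R_{1/n} M / \Sigma^{n-1}\Phi M = R_{1/n-1} M$ from Lemma \ref{lem:R_truncated}(1), the disjointness observation gives a short exact sequence $0 \to \Lambda^2 M \to P/\Sigma^{n-1}\Phi M \to R_{1/n-1} M \to 0$.

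Pulling $\Lambda^2 M$ back through $\cale_n M \twoheadrightarrow P/\Sigma^{n-1}\Phi M$, the preimage $K$ sits in $0 \to \Sigma^{n-1} S^2 M \to K \to \Lambda^2 M \to 0$, and the composite $\Lambda^2 M \hookrightarrow P \to \cale_n M$ lands in $K$ and maps isomorphically onto $\Lambda^2 M$ in the quotient (by the disjointness observation), giving a splitting. Hence $K \cong \Lambda^2 M \oplus \Sigma^{n-1} S^2 M$ and the claimed sequence $0 \to \Lambda^2 M \oplus \Sigma^{n-1} S^2 M \to \cale_n M \to R_{1/n-1} M \to 0$ follows. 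The main obstacle is ensuring that $\Lambda^2 M$ and $\Sigma^{n-1} S^2 M$ remain independent in $\cale_n M$ after the pushout identifications, since pushouts can collapse submodules in unexpected ways; this reduces to the transparent disjointness observation inside the pullback $P$, after which the splitting of $K$ is automatic.
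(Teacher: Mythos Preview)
Your argument is correct for $n \geq 2$ and gives a careful unpacking of what the paper records only as ``Straightforward''; the pullback--then--pushout realization of $\cale_n M$, the disjointness of $\Lambda^2 M$ and $\Sigma^{n-1}\Phi M$ inside $P$ (one having zero $R_{1/n}M$-coordinate, the other zero $\Gamma^2 M$-coordinate), and the resulting splitting of $K$ are all handled cleanly. This is essentially the argument the paper has in mind.

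There is, however, a genuine gap at the boundary case $n=1$. Your key step is the claim that the composite
\[
\Sigma^{n-1}\Phi M \hookrightarrow R_{1/n}M \twoheadrightarrow \Phi M
\]
vanishes, so that the inclusion lifts to $P$ with zero $\Gamma^2 M$-coordinate. For $n=1$ this is false: the ``bottom'' of the $u$-adic filtration of $R_{1/1}M = \Phi M$ is the whole module, and both arrows are the identity on $\Phi M$, so the composite is the identity, not zero. A lift to $P=\Gamma^2 M$ would then amount to a natural section of $\Gamma^2 M \twoheadrightarrow \Phi M$, which does not exist. In this degenerate case the middle row of diagram (\ref{eqn:pb_po}) consists of isomorphisms, $R_{1/0}M=0$, and the paper treats $\cale_1 M \cong M^{\otimes 2}$ directly (Example \ref{exam:n=1}); your general argument should be stated for $n\geq 2$, with $n=1$ handled separately.
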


\begin{proof}
 Straightforward.
\end{proof}

The functor $\cale_n$  provides an algebraic model for
$H^* (D_{n,2}Y)$:

\begin{prop}
\label{prop:alg_model_2col}
 For $n \in \nat \cup \{ \infty \}$ and  $Y$ a spectrum with $H^* (Y)$ bounded
below and of finite type, there is a natural isomorphism 
\[
 \mathscr{E}_n H^* (Y) \cong H^* (D_{n, 2} Y) 
\]
which extends the inclusion $\star \amalg L_{n-1} :  \Lambda^2 H^* (Y) \oplus  
\Sigma^{n-1} S^2 H^* (Y)
\hookrightarrow H^* (D_{n,2} Y)$ of Lemma \ref{lem:nat_incl}.
 \end{prop}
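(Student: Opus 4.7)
The plan is to construct a natural morphism $\Psi_n : \cale_n H^*(Y) \to H^*(D_{n,2} Y)$ by assembling the structural operations $\star$, $L_{n-1}$ and $\ddl_r$ of \cite[Theorem 3.14]{Kuhn_nonrealization} into the pushout-and-pullback data defining $\cale_n$, and then to show $\Psi_n$ is an isomorphism by comparing natural filtrations. Set $M := H^*(Y)$. Lemma \ref{lem:nat_incl} immediately supplies the ``bottom'' of $\Psi_n$ in the form $\Lambda^2 M \oplus \Sigma^{n-1} S^2 M \hookrightarrow H^*(D_{n,2} Y)$ via $\star \amalg L_{n-1}$. By Proposition \ref{prop:filter_ddl}, the operations $\ddl_r$ for $1 \leq r < n$ are $\cala$-linear modulo $\ddl_{>r}$-terms, so the $u$-adic filtration of Lemma \ref{lem:R_truncated} (whose subquotients are $\Sigma^i \Phi M$, $0 \leq i < n$) assembles them into an $\cala$-linear map $R_{1/n} M \to H^*(D_{n,2} Y)/\Lambda^2 M$. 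The operation $\ddl_0$ is not linear but satisfies the \emph{divided-square} identity $\ddl_0(x+y) = \ddl_0 x + \ddl_0 y + x \star y$; this is precisely the universal property identifying the pair $(\ddl_0, \star)$ with an $\cala$-linear map $\Gamma^2 M \to H^*(D_{n,2} Y)$ restricting to $\star$ on the subobject $\Lambda^2 M = \ker(\Gamma^2 M \twoheadrightarrow \Phi M)$, the $\cala$-linearity being provided by the Nishida relations of \cite[Propositions 3.1, 3.15]{Kuhn_nonrealization}.

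These two maps (from $\Gamma^2 M$ and from $R_{1/n} M$) agree after projection to $\Phi M$, modulo $\Lambda^2 M$: this is precisely the compatibility required to factor through the pullback in diagram (\ref{eqn:pb_po}). The pushout along the Frobenius inclusion $\Sigma^{n-1} \Phi M \hookrightarrow \Sigma^{n-1} S^2 M$ (Lemma \ref{lem:U_ses_Frobenius}) is then governed by the identification of $L_{n-1}$ restricted to the subfunctor $\Sigma^{n-1} \Phi M$ with the top Dyer-Lashof class $\overline{\ddl_{n-1}}$, which is immediate from Kuhn's definitions and a degree count. These compatibilities assemble to a natural morphism $\Psi_n : \cale_n M \to H^*(D_{n,2} Y)$ that extends $\star \amalg L_{n-1}$ by construction.

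To conclude that $\Psi_n$ is an isomorphism, I would compare the short exact sequence
\[
 0 \to \Lambda^2 M \oplus \Sigma^{n-1} S^2 M \to \cale_n M \to R_{1/n-1} M \to 0
\]
of Proposition \ref{prop:functor_E} with the natural filtration of $H^*(D_{n,2} Y)$ whose bottom layer is the image of $\star \amalg L_{n-1}$ and whose quotient carries the filtration of Proposition \ref{prop:filter_ddl}, with subquotients $\Sigma^i \Phi M$ for $0 \leq i \leq n-2$; these assemble, via the $u$-adic filtration, to $R_{1/n-1} M$. Specialized to $j = 2$ (where no iteration of the $\ddl_r$ is yet possible), \cite[Theorem 3.14]{Kuhn_nonrealization} shows these exhaust the generators and relations of $H^*(D_{n,2} Y)$, so $\Psi_n$ is a graded isomorphism on associated gradeds, hence an isomorphism. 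The main obstacle will be verifying the $\cala$-linearity of the $\Gamma^2$-extension of $\ddl_0$ together with the consistency of the gluing at the pullback vertex: since the middle row of diagram (\ref{eqn:pb_po}) fails to be exact for $n \geq 3$, the verification must be carried out one level of the $u$-adic filtration at a time, propagating the Nishida relations inductively. The case $n = \infty$ is handled analogously by replacing $R_{1/n} M$ with $R_1 M$, and recovers the calculation of \cite{KMcC}.
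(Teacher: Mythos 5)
The paper's own proof is a one-sentence citation to Kuhn's structural description of $H^*(D_{n,\bullet}Y)$; your proposal spells out the verification in the way the author intends the reader to supply. The overall structure of your argument --- assemble the operations into a map $\Psi_n$ out of $\cale_n M$, then compare filtrations on associated gradeds against Kuhn's Theorem 3.14 --- is the right one, and your identification of $\Sigma^{n-1}\Phi M \hookrightarrow \Sigma^{n-1}S^2 M$ with the role of $\ddl_{n-1}$ inside the image of $L_{n-1}$ is the correct key compatibility at the pushout vertex.

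There is one local overstatement. You claim that $(\ddl_0,\star)$ assembles to an $\cala$-linear map $\Gamma^2 M \to H^*(D_{n,2}Y)$, with the $\cala$-linearity ``provided by the Nishida relations.'' That cannot be right as stated: Proposition \ref{prop:filter_ddl}, which records the $\cala$-linearity content of Kuhn's Proposition 3.15(i), says that $\ddl_0$ becomes $\cala$-linear only after quotienting by \emph{both} the image of the $\star$-product on $\Lambda^2$ \emph{and} $\sum_{r\geq 1}\mathrm{image}(\ddl_r)$. Extending to $\Gamma^2 M$ via the divided-square identity cancels only the first obstruction, so the map $\Gamma^2 M \to H^*(D_{n,2}Y)$ you obtain is $\field_2$-linear but its Steenrod-action defect lies in $\sum_{r\geq 1}\mathrm{image}(\ddl_r)$. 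The genuinely $\cala$-linear object is the map out of the full pullback $\cale_n M$, where the $R_{1/n}M$ factor carries exactly the $\ddl_r$ data needed to absorb these defects; and, as your final sentences already acknowledge, this has to be checked one $u$-adic filtration step at a time by propagating the Nishida relations. So your concluding remark correctly identifies where the work lies, but the earlier assertion of outright $\cala$-linearity of the $\Gamma^2 M$ piece should be weakened to $\field_2$-linearity, with the Steenrod compatibility deferred to the filtration induction.
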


\begin{proof}
The result is essentially a restatement of the results of \cite[Section 
3]{Kuhn_nonrealization}, using the algebraic functors introduced in Section 
\ref{sect:alg_prelim}.
\end{proof}

\begin{exam}
For $n=1$, one recovers from Example \ref{exam:n=1} and Proposition 
\ref{prop:alg_model_2col} the standard identification $H^* (D_{1,2} X) \cong
H^* 
(X) ^{\otimes 2}$. 
\end{exam}

\begin{rem}
 For $n \geq 1$, there are natural transformations 
$\cale_{n+1} \rightarrow  \cale_n$,  $
\cale_n \Sigma  \rightarrow  \Sigma \cale_{n+1}$
that provide algebraic models (via Proposition \ref{prop:alg_model_2col}) for
the  morphisms in cohomology induced respectively by
$D_{n,2} Y  \rightarrow  D_{n+1,2}Y$ and $
\Sigma D_{n+1,2} Y \rightarrow  D_{n,2}\Sigma Y$.
\end{rem}

\subsection{The algebraic differential}

There is an algebraic differential which is related to the differential used by 
Singer (see \cite{p_viasm} for references). 

Recall  that  $\field [u^{\pm 1}]$ has an $\cala$-module structure extending 
that of $\field[u]$;  it is a fundamental fact that the residue
map 
$
 \field [u^{\pm 1}] \rightarrow \Sigma^{-1} \field
$ 
is $\cala$-linear. This gives rise to a
natural transformation 
$
 d_M : R_1 M \rightarrow \Sigma^{-1} M 
$ 
in $\amod$, since $R_1 M $ embeds in the half-completed tensor product
$\field[u^{\pm 1}] \largetensor M$. If $M$ is unstable then  $d_M$ is trivial.

\begin{lem}
\label{lem:diff_R1t}
\cite{p_viasm}
 For $1 \leq n \in \nat$ and $N \in \obj \unst$, the differential
$d_{\Sigma^{-n}N}$ induces a natural 
transformation $d_{1/n} : R_{1/n} (\Sigma^{-n}N )\rightarrow  \Sigma^{-n-1}N$
which fits into a commutative diagram
\[
 \xymatrix{
R_1 (\Sigma^{-n} N) \ar[r]^{d_{\Sigma^{-n}N}}
\ar@{->>}[d]
&
\Sigma^{-n-1}N
\ar@{=}[d]
\\
R_{1/n} (\Sigma^{-n}N )
\ar[r]_{d_{1/n}}
&
\Sigma^{-n-1}N.
}
\]
The cokernel of 
$\Sigma d_{1/n} : \Sigma R_{1/n} (\Sigma^{-n}N ) \rightarrow  \Sigma^{-n}N
$
is $\Omega^n N$.
\end{lem}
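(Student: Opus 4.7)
The plan is to handle the two assertions of the lemma in turn: first, producing $d_{1/n}$ by showing that $d_{\Sigma^{-n}N}$ vanishes on $u^n R_1(\Sigma^{-n}N)$, and second, identifying the image of $\Sigma d_{1/n}$ with the ``instability defect'' submodule of $\Sigma^{-n}N$ whose quotient is, by definition, $\Omega^n N = \destab(\Sigma^{-n}N)$.

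For the construction of $d_{1/n}$, I would compute on the generators of $R_1(\Sigma^{-n}N)$. The module $R_1 M$ is the $\field[u]$-submodule of $\field[u^{\pm 1}] \largetensor M$ generated by the total Steenrod squares $\mathrm{St}_1(x)$. Taking $x = \Sigma^{-n}y$ with $y \in N$, one has
\[
\mathrm{St}_1(\Sigma^{-n}y) \ = \ \sum_{i} u^{|y|-n-i} \otimes \Sigma^{-n} Sq^i y,
\]
and since $N$ is unstable, $Sq^i y = 0$ for $i > |y|$, so the lowest $u$-degree appearing is $u^{-n}$. Multiplying by $u^n$ therefore lands in $\field[u]\otimes \Sigma^{-n}N$, i.e.\ in nonnegative $u$-powers, and by $\field[u]$-linearity the same holds for all of $u^n R_1(\Sigma^{-n}N)$. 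Since the residue map $\field[u^{\pm 1}] \to \Sigma^{-1}\field$ kills everything of nonnegative $u$-degree, $d_{\Sigma^{-n}N}$ vanishes on $u^n R_1(\Sigma^{-n}N)$ and factors through $R_{1/n}(\Sigma^{-n}N)$, giving $d_{1/n}$ together with the stated commutative diagram.

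For the cokernel identification, I would compute the image of $d_{1/n}$ on the classes $[u^a \mathrm{St}_1(\Sigma^{-n}y)]$ with $0 \leq a < n$, which generate $R_{1/n}(\Sigma^{-n}N)$ as a $\field$-vector space by Lemma \ref{lem:R_truncated}. The residue of $u^a \mathrm{St}_1(\Sigma^{-n}y)$ picks out the term $i = |y|-n+1+a$, yielding
\[
d_{1/n}\bigl[u^a \mathrm{St}_1(\Sigma^{-n}y)\bigr] \ = \ \Sigma^{-n-1} Sq^{|y|-n+1+a}y,
\]
and these exponents sweep precisely $\{|y|-n+1,\dots,|y|\}$; higher exponents vanish in $N$ by unstability. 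Since $d_{1/n}$ is $\cala$-linear, its image is the $\cala$-submodule of $\Sigma^{-n-1}N$ generated by such elements. After suspending, the image of $\Sigma d_{1/n}$ in $\Sigma^{-n}N$ is the $\cala$-submodule generated by $\Sigma^{-n}Sq^j y = Sq^j(\Sigma^{-n}y)$ for $j > |y|-n = |\Sigma^{-n}y|$, i.e.\ exactly the submodule of instability relations. Quotienting by this submodule is the definition of $\destab(\Sigma^{-n}N)$, which for $N$ unstable coincides with $\Omega^n N$ as recalled in Section \ref{sect:alg_prelim}.

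The main obstacle is verifying the first step rigorously while working with the half-completed tensor product $\field[u^{\pm 1}]\largetensor M$ rather than the honest tensor product: one needs the lower bound on the $u$-degree of elements of $R_1(\Sigma^{-n}N)$ and the compatibility of the residue map with this completion, both of which are ultimately consequences of the instability of $N$ but require careful bookkeeping (the references \cite{LZ,p_destab,p_viasm} cover this). Once that is in place, the cokernel computation is a direct matching of generators.
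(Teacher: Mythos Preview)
Your proposal is correct and is precisely the ``straightforward'' unwinding of definitions that the paper alludes to (the paper gives no further details, deferring to \cite{p_viasm}). One minor remark: in the cokernel step you say ``since $d_{1/n}$ is $\cala$-linear, its image is the $\cala$-submodule generated by such elements''; more directly, the image is the $\field$-span of the images of your $\field$-generators, and it is a standard fact (via the Adem relations) that this span is already an $\cala$-submodule, so the two descriptions agree.
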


\begin{proof}
 Straightforward.
\end{proof}

By the definition of $\cale_n M$ (for general $M \in
\obj \amod$), the quotient $\cale_n M/ \Lambda^2 M$ occurs as the pushout of the
diagram:
\[
 \xymatrix{
\Sigma^{n-1} \Phi M \ar@{^(->}[r]
\ar@{^(->}[d]
&
R_{1/n} M 
\\
\Sigma^{n-1} S^2 M.
}
\]

\begin{prop}
\label{prop:model_d1}
 For $1 \leq n \in \nat$ and $K$ a connected unstable algebra with augmentation
ideal $\overline{K}$, the natural 
transformation 
$$
d_{1/n} :  R_{1/n} (\Sigma^{-n}\overline{K} ) \rightarrow  
\Sigma^{-n-1}\overline{K}$$
 together 
with the product 
$ 
 S^2 (\overline{K}) \rightarrow \overline{K} 
$ 
induce a natural transformation in $\amod$:
\[
 d_1 :  \cale_n (\Sigma^{-n} \overline{K}) \rightarrow \Sigma^{-n-1}
\overline{K}.
\]
\end{prop}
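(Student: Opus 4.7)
The plan is to construct $d_1$ by giving compatible maps out of each piece appearing in the assembly of $\cale_n(\Sigma^{-n}\overline{K})$ and then invoking the universal property of the pushout. Recall from Proposition \ref{prop:functor_E} that $\cale_n M$ is built by first forming the pullback $\tilde{R}$ of $\Gamma^2 M \twoheadrightarrow \Phi M$ along $R_{1/n} M \twoheadrightarrow \Phi M$, whose kernel $\Lambda^2 M \hookrightarrow \tilde{R}$ is identified with the kernel of $\Gamma^2 M \twoheadrightarrow \Phi M$, and then forming the pushout of the two inclusions $\Sigma^{n-1}\Phi M \hookrightarrow \tilde{R}$ and $\Sigma^{n-1}\Phi M \hookrightarrow \Sigma^{n-1} S^2 M$ that feature in diagram (\ref{eqn:pb_po}).

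Setting $M = \Sigma^{-n}\overline{K}$ and using the natural identification $\Sigma^{n-1} S^2(\Sigma^{-n}\overline{K}) \cong \Sigma^{-n-1} S^2\overline{K}$, I would specify two $\cala$-linear maps into $\Sigma^{-n-1}\overline{K}$. The first is the composite $\tilde{R} \twoheadrightarrow R_{1/n}(\Sigma^{-n}\overline{K}) \xrightarrow{d_{1/n}} \Sigma^{-n-1}\overline{K}$ from Lemma \ref{lem:diff_R1t}; by construction this vanishes on the $\Lambda^2(\Sigma^{-n}\overline{K})$-summand. The second is $\Sigma^{-n-1}\mu \colon \Sigma^{-n-1} S^2\overline{K} \to \Sigma^{-n-1}\overline{K}$, where $\mu \colon S^2\overline{K} \to \overline{K}$ denotes the product on the unstable algebra. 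By the universal property of the pushout, these two maps determine the required morphism $d_1$ provided their restrictions to $\Sigma^{n-1}\Phi(\Sigma^{-n}\overline{K}) \cong \Sigma^{-n-1}\Phi\overline{K}$ coincide.

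The core of the proof is this compatibility check. On the $S^2$-side, the restriction is $\Sigma^{-n-1}$ applied to the Frobenius factorization $\Phi\overline{K} \hookrightarrow S^2\overline{K} \xrightarrow{\mu} \overline{K}$ provided by Lemma \ref{lem:U_ses_Frobenius}, which is precisely $\Sigma^{-n-1}(Sq_0)$. On the Singer side, the inclusion $\Sigma^{n-1}\Phi M \hookrightarrow R_{1/n} M$ identifies with the top piece of the $u$-adic filtration of $R_{1/n}(\Sigma^{-n}\overline{K})$ (Lemma \ref{lem:R_truncated}), carried by the coefficient of $u^{n-1}$ in the total Steenrod square $\mathrm{St}_1(\sigma^{-n} x) = \sum_i u^{(|x|-n)-i}\otimes \sigma^{-n}(Sq^i x)$. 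A direct computation of the residue on this coefficient identifies the restriction of $d_{1/n}$ to this summand with $\Sigma^{-n-1}(Sq_0)$ as well, so the two maps agree.

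The main obstacle is precisely this explicit comparison on the top $u$-adic piece, which requires a careful manipulation of the total Steenrod square inside the half-completed tensor product defining $R_1$; all remaining steps follow formally from Proposition \ref{prop:functor_E}. Once the compatibility is established, the pushout presentation yields the required morphism $d_1 \colon \cale_n(\Sigma^{-n}\overline{K}) \to \Sigma^{-n-1}\overline{K}$ in $\amod$, and naturality of $d_1$ in the connected unstable algebra $K$ is inherited from the naturality of $d_{1/n}$, $R_{1/n}$, and $\mu$.
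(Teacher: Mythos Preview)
Your proof is correct and follows essentially the same approach as the paper: you assemble $d_1$ from $d_{1/n}$ on the $R_{1/n}$ piece and from the product on the $\Sigma^{n-1}S^2$ piece, then check compatibility on $\Sigma^{n-1}\Phi(\Sigma^{-n}\overline{K})\cong\Sigma^{-n-1}\Phi\overline{K}$, which is exactly the ``straightforward'' verification the paper alludes to. Your residue computation showing that both restrictions equal $\Sigma^{-n-1}(Sq_0)$ is the heart of the matter and is carried out correctly.
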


\begin{proof}
The subobject $\Sigma^{n-1} S^2(\Sigma^{-n} \overline{K}) $ of
$\cale_n(\Sigma^{-n} \overline{K}) $ is naturally 
isomorphic to $\Sigma^{-n-1}S^2 ( \overline{K})$, hence the product 
induces a natural morphism of $\cala$-modules
$$
\Sigma^{n-1} S^2(\Sigma^{-n} 
\overline{K}) \rightarrow \Sigma^{-n-1} \overline{K}.
$$
 The verification that this  is compatible with $d_{1/n}$ given by Lemma
\ref{lem:diff_R1t} 
is straightforward.
\end{proof}

\subsection{The spectral sequence differential $d_1$}

Consider the first stages of the Arone-Goodwillie tower for $\Omega^n X$, with 
$X$ an
$n$-connected space. There is a cofibre sequence of spectra
\[
 D_{n,2} \Sigma^{-n}X \rightarrow P^n_2 X \rightarrow \Sigma^{-n} X
\]
and the differential $d_1$ from the $-2$-column to the $-1$-column of the
spectral sequence is the connecting morphism
\[
 d_1 : \Sigma H^* ( D_{n,2} \Sigma^{-n} X) \rightarrow  H^* (\Sigma^{-n}
X).
\]
This can be identified algebraically in terms of the isomorphism of Proposition
 \ref{prop:alg_model_2col}.

\begin{prop}
\label{prop:compat_d1}
 For $1 \leq n \in \nat$ and $X$ a connected space with $H^* (X)$ of finite 
type, the following diagram
commutes:
\[
 \xymatrix{
\Sigma \mathscr{E}_n (\Sigma^{-n} H^* (X) ) 
\ar[d]_\cong 
\ar[r]^(.55){\Sigma d_1} 
&
 \Sigma^{-n} H^* (X) 
\ar[d]^\cong
\\
\Sigma
H^* ( D_{n,2} \Sigma^{-n} X)
\ar[r]_(.55){d_1}
&
 H^* (\Sigma^{-n} X)
}
\]
in which the $d_1$ of the top row indicates the algebraic differential of
Proposition \ref{prop:model_d1}. 
\end{prop}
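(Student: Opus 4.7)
The plan is to verify the commutativity by decomposing $\mathscr{E}_n(\Sigma^{-n}H^*(X))$ via the natural exact sequence of Proposition \ref{prop:functor_E} into three pieces — $\Lambda^2$, $\Sigma^{n-1}S^2$ and the Singer-type quotient $R_{1/n-1}$ — and checking each piece separately after identifying $\mathscr{E}_n(\Sigma^{-n}H^*(X))\cong H^*(D_{n,2}\Sigma^{-n}X)$ via Proposition \ref{prop:alg_model_2col}. The gluing is then dictated by naturality of the pushout/pullback defining $\mathscr{E}_n$.

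On the $\Lambda^2$ piece, the algebraic $d_1$ vanishes by construction (Proposition \ref{prop:model_d1}); topologically, classes in the image of $\star:\Lambda^2 H^*(\Sigma^{-n}X)\hookrightarrow H^*(D_{n,2}\Sigma^{-n}X)$ are external cup products of classes already present in $H^*(P^n_1 X)=H^*(\Sigma^{-n}X)$, so by the multiplicativity of the Arone-Goodwillie tower \cite{AK} they lift through the tower and the connecting morphism kills them. On the Browder piece $\Sigma^n S^2(\Sigma^{-n}H^*(X))$, the algebraic $d_1$ is the cup product of $H^*(X)$; topologically, this is Kuhn's identification \cite{Kuhn_nonrealization} (recalled in diagram (\ref{eqn:filt_ss})) of the connecting morphism on $\Sigma L_{n-1}(x\otimes y)$ with the product $xy$ in $H^*(\Omega^n X)$ pulled back to $H^*(\Sigma^{-n}X)$.

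The main obstacle is the Singer piece, where I must identify the topological $d_1$ on a class of the form $\Sigma \ddl_r(x)$ (with $0\le r<n$ and $x$ a class from $H^*(\Sigma^{-n}X)$) with the algebraic $d_{1/n}$ of Lemma \ref{lem:diff_R1t} evaluated on the corresponding generator of $R_{1/n}(\Sigma^{-n}H^*(X))$. Topologically, Kuhn's Nishida relations \cite[Section 3]{Kuhn_nonrealization} produce a Steenrod square $Sq^{|x|-r}x$ modulo terms from higher Dyer-Lashof operations; algebraically, $d_{1/n}$ is defined as the residue at $u^{-1}$ of the total Steenrod square $\mathrm{St}_1(x)$, which picks out the same Steenrod square. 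Matching these two realizations of the same Kudo-type transgression is the crux of the argument. Once it is done, compatibility on the overlap $\Sigma^{n-1}\Phi(\Sigma^{-n}H^*(X))$ of the pushout and pullback defining $\mathscr{E}_n$ is automatic, since both the cup product and $d_{1/n}$ reduce there to the action of $Sq_0$, completing the verification.
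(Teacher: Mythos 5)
Your proposal and the paper's proof are methodologically quite different. The paper's proof is a single line: it simply cites Kuhn's Proposition 4.3 in \cite{Kuhn_nonrealization}, which already identifies the $d_1$ of the Arone--Goodwillie spectral sequence in terms of the Singer-style differential. Proposition \ref{prop:compat_d1} is thus a translation of Kuhn's statement into the algebraic language of $\cale_n$ set up in Section \ref{sect:alg_model}. You instead attempt to re-derive that identification from scratch by decomposing $\cale_n(\Sigma^{-n}H^*(X))$ along the exact sequence of Proposition \ref{prop:functor_E} and checking the $\Lambda^2$, Browder and Singer pieces one by one. The overall plan is sound, and you correctly locate the crux in the Singer piece.

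However, there is a genuine gap at exactly that crux. You claim that Kuhn's \emph{Nishida relations} produce $Sq^{|x|-r}x$ and that this matches the residue defining $d_{1/n}$. The Nishida relations govern the action of Steenrod operations \emph{inside} $H^*(D_{n,2j}Y)$ (they say how $Sq^a$ interacts with $\ddl_r$); they do not compute the connecting morphism $d_1\colon \Sigma H^*(D_{n,2}\Sigma^{-n}X)\to H^*(\Sigma^{-n}X)$, which is a map \emph{out of} the extended-power cohomology coming from the cofibre sequence $D_{n,2}\Sigma^{-n}X\to P^n_2X\to\Sigma^{-n}X$. Identifying $d_1$ on $\ddl_r(x)$ with a transgressed Steenrod square is a Kudo-type transgression statement and is precisely the content of Kuhn's Proposition 4.3; it cannot be read off from the internal Nishida action. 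So your sketch names the hard step but substitutes the wrong tool for it. A secondary, smaller gap: since the decomposition of $\cale_n$ is an extension (not a direct sum), agreement of the two differentials on the subobject $\Lambda^2\oplus\Sigma^{n-1}S^2$ together with agreement on the quotient $R_{1/n-1}$ does not immediately force agreement on $\cale_n$; one must argue on explicit lifts (the $\ddl_r(x)$) of generators of the quotient, which you gesture at but do not carry out. Once the transgression input is supplied (by invoking Kuhn's Proposition 4.3 directly, as the paper does), both gaps close.
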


\begin{proof}
This result corresponds to \cite[Proposition 4.3]{Kuhn_nonrealization}.
\end{proof}

\subsection{Exploiting the nilpotent filtration}
\label{subsect:exploit_nil}

\begin{prop}
\label{prop:alg_d1_niln}
 For $1 \leq n \in \nat$ and an unstable module $N \in \obj \nil_n$, the
algebraic 
differential $d_{1/n}:R_{1/n} (\Sigma^{-n} N) \rightarrow \Sigma^{-n-1} N$
factors 
across $\Sigma ^{-n-1} \nlp_{n+1}N$ and the resulting map fits into a natural
commutative diagram:
\[
 \xymatrix{
R_{1/n}(\Sigma^{-n}N)
\ar[r]
\ar@{->>}[d]
&
\Sigma ^{-n-1} \nlp_{n+1}N
\ar@{->>}[d]
\\
\Phi \rho_n N
\ar[r]_{\delta_n}
&
\rho_{n+1}N,
}
\]
where the  vertical morphisms are induced by the natural projections $R_{1/n}
\twoheadrightarrow \Phi$ of Lemma \ref{lem:R_truncated} together with 
$\Sigma^{-n}N \twoheadrightarrow \rho_n N$.

The natural transformation $\delta_n : \Phi \rho_n N \rightarrow \rho_{n+1}N$ is
induced by the linear transformation $x \in \Sigma^{-n} N \mapsto
Sq^{|x|+1}(x)$. 
In particular, if $\delta_n$ is non-trivial, then $N$ is not an $n$-fold
suspension. 
\end{prop}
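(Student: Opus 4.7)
The plan is to establish the proposition in three steps: the factorisation of $d_{1/n}$, the construction and well-definedness of $\delta_n$, and the explicit identification together with the suspension consequence.

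\textbf{Step 1 (factorisation).} Since the target $N$ is unstable, by Corollary~\ref{cor:nili_adjunction} it suffices to show that $\Sigma^{n+1}R_{1/n}(\Sigma^{-n}N)$ lies in $\alnil_{n+1}$; then $\Sigma^{n+1}d_{1/n}$ factors through $\nlp_{n+1}N$ and desuspending gives the required factorisation. By Lemma~\ref{lem:R_truncated} the module $R_{1/n}(\Sigma^{-n}N)$ carries a finite $u$-adic filtration with successive subquotients $\Sigma^{i-2n}\Phi N$ for $0\leq i<n$; since $\Phi N\in\nil_{2n}$ by Lemma~\ref{lem:Phi}, each such subquotient is almost unstable, so $R_{1/n}(\Sigma^{-n}N)\in\alnil_0$. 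Proposition~\ref{prop:alnil_serre_sigma} then gives $\Sigma^{n+1}R_{1/n}(\Sigma^{-n}N)\in\alnil_{n+1}$.

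\textbf{Step 2 (the square).} The left vertical morphism $R_{1/n}(\Sigma^{-n}N)\twoheadrightarrow\Phi\rho_n N$ is the composite of the canonical surjection $R_{1/n}(\Sigma^{-n}N)\twoheadrightarrow\Phi(\Sigma^{-n}N)$ from Lemma~\ref{lem:R_truncated} with $\Phi(\Sigma^{-n}N)\twoheadrightarrow\Phi\rho_n N$ obtained by applying the exact functor $\Phi$ to $\Sigma^{-n}N\twoheadrightarrow\rho_n N$. Its kernel is generated by $uR_{1/n}(\Sigma^{-n}N)$ together with the image of $R_{1/n}(\Sigma^{-n}\nlp_{n+1}N)$. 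The former lies in $\alnil_1$ because its $u$-adic subquotients $\Sigma^{i-2n}\Phi N$ ($1\leq i<n$) each lie in $\alnil_i$; the latter lies in $\alnil_2$ since $\Sigma^{-n}\nlp_{n+1}N\in\alnil_1$ and $R_{1/n}$ doubles $\alnil$-degree by Proposition~\ref{prop:alunst_stable_Phi_R}. Suspending $n+1$ times places each inside $\alnil_{n+2}$, whence Corollary~\ref{cor:nili_adjunction} (applied with $i=n+2$) forces the restriction of $d_{1/n}$ to these submodules to land in $\Sigma^{-n-1}\nlp_{n+2}N$, which is precisely the kernel of the right vertical $\Sigma^{-n-1}\nlp_{n+1}N\twoheadrightarrow\rho_{n+1}N$. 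This produces the well-defined $\delta_n$ making the square commute.

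\textbf{Step 3 (identification and final statement).} The residue description of $d_M$ preceding Lemma~\ref{lem:diff_R1t} sends $\mathrm{St}_1(x)=\sum_i u^{|x|-i}\otimes Sq^i x$ to the coefficient of $u^{-1}$, namely $Sq^{|x|+1}(x)\in\Sigma^{-1}M$; specialising to $M=\Sigma^{-n}N$ and passing to the quotients defining $\Phi\rho_n N$ and $\rho_{n+1}N$ identifies $\delta_n$ as the natural transformation induced by $x\mapsto Sq^{|x|+1}x$. For the final claim, if $N\cong\Sigma^n K$ for some $K\in\obj\unst$, then $\Sigma^{-n}N\cong K$ is unstable and $d_K=0$ by the observation preceding Lemma~\ref{lem:diff_R1t}; consequently $d_{1/n}=0$ and hence $\delta_n=0$, and the contrapositive gives the non-suspension criterion. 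The main piece of bookkeeping is concentrated in Step 2: correctly tracking the two summands of the kernel of the left vertical through the almost nilpotent filtration so as to apply Corollary~\ref{cor:nili_adjunction} uniformly with $i=n+2$.
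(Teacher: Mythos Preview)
Your proof is correct and follows essentially the same route the paper intends: the paper records the proof as ``Straightforward, unravelling definitions to identify the morphism $\delta_n$'', and what you have written is a careful unpacking of precisely this, using the almost-unstable machinery (Corollary~\ref{cor:nili_adjunction}, Propositions~\ref{prop:alnil_serre_sigma} and~\ref{prop:alunst_stable_Phi_R}) developed earlier in the paper for exactly this purpose. Your three-step structure---factorisation via $\alnil_{n+1}$, well-definedness of $\delta_n$ by tracking the kernel into $\alnil_{n+2}$, and explicit identification via the residue formula---is the natural elaboration of the paper's one-line proof.
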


\begin{proof}
 Straightforward,  unravelling 
definitions to identify  the morphism $\delta_n$.
\end{proof}

\begin{rem}
\label{rem:identify_delta}
 Using the notation of Proposition \ref{prop:alg_d1_niln}, if $x = \Sigma^{-n} 
y$ for $y \in N$, then 
 $Sq^{|x|+1} (\Sigma^{-n} y) = \Sigma^{-n} Sq^{|y|+1 -n}y$. In particular, for 
$n=1$, the map $\Phi \rho_1 N \rightarrow \rho_2 N$ is 
 induced by the operation $Sq_0$ on $N$.
\end{rem}

\begin{cor}
\label{cor:d1_ngeq2}
 In the situation of Corollary \ref{cor:good_auss_d1} for $n \geq 2$, the 
induced morphism
factors as 
\[
 \Gamma^2 (\rho_n H^* (X) ) 
\twoheadrightarrow 
\Phi (\rho_n H^* (X)) 
\stackrel{\delta_n}{\rightarrow} 
\rho_{n+1} H^* (X).
\]
\end{cor}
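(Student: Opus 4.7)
The plan is to invoke the algebraic model for the spectral sequence developed in Section \ref{sect:alg_model}, and then to peel off the contributions to $d_1$ according to the internal structure of $\cale_n(\Sigma^{-n}N)$, with $N := H^*(X) \in \nil_n$. By Propositions \ref{prop:alg_model_2col} and \ref{prop:compat_d1}, the spectral sequence differential under consideration is, after desuspension, the algebraic differential
\[
 d_1 \colon \cale_n(\Sigma^{-n}N) \to \Sigma^{-n-1} N
\]
of Proposition \ref{prop:model_d1}, which is assembled from the cup product on the $\Sigma^{n-1} S^2(\Sigma^{-n}N)$ summand and from $d_{1/n}$ on the $R_{1/n}(\Sigma^{-n}N)$ ingredient.

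The first step is to show that $d_1$ vanishes on the subobject $\Lambda^2(\Sigma^{-n}N) \hookrightarrow \cale_n(\Sigma^{-n}N)$ supplied by the $\star$-product. Inspecting diagram (\ref{eqn:pb_po}), $\Lambda^2 M$ embeds in $\cale_n M$ as the kernel of the projection of the pullback piece onto $R_{1/n} M$; by Proposition \ref{prop:model_d1}, $d_1$ on this pullback piece factors through precisely this projection, so $d_1|_{\Lambda^2(\Sigma^{-n}N)} = 0$. Theorem \ref{thm:cohomEP_good} identifies $\rho_0(\cale_n(\Sigma^{-n}N)) \cong \Gamma^2 \rho_n N$ in such a way that the $\star$-product inclusion corresponds to the canonical $\Lambda^2 \rho_n N \hookrightarrow \Gamma^2 \rho_n N$. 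Consequently the induced map $\Gamma^2 \rho_n N \to \rho_{n+1} N$ kills $\Lambda^2 \rho_n N$, and therefore factors through the projection $\Gamma^2 \rho_n N \twoheadrightarrow \Phi \rho_n N$.

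The second step is to identify the resulting map $\Phi \rho_n N \to \rho_{n+1} N$ with $\delta_n$. The cup-product contribution enters via $\Sigma^{n-1}S^2(\Sigma^{-n}N) \cong \Sigma^{-n-1}S^2 N \to \Sigma^{-n-1} N$; since $N \in \nil_n$ implies $S^2 N \subseteq N \otimes N \in \nil_{2n}$, this factors through $\Sigma^{-n-1} \nlp_{2n} N$. For $n \geq 2$ one has $2n \geq n+2$, so $\nlp_{2n} N \subseteq \nlp_{n+2} N$, and the cup-product contribution vanishes in $\rho_{n+1} N$. Hence the induced map is determined entirely by $d_{1/n}$ on $R_{1/n}(\Sigma^{-n}N)$, which by Proposition \ref{prop:alg_d1_niln} descends on $\rho_0$ to $\delta_n$, completing the factorization.

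The main obstacle is the bookkeeping required to match the internal pullback-pushout structure of $\cale_n(\Sigma^{-n}N)$ with the canonical short exact sequence $0 \to \Lambda^2 \to \Gamma^2 \to \Phi \to 0$ on the reduced-module level, and to verify that the $\star$-product and dual Dyer-Lashof contributions fit together as claimed through Theorem \ref{thm:cohomEP_good}. The decisive numerical input is the inequality $2n \geq n+2$, valid exactly for $n \geq 2$, which places the symmetric-square contribution deep enough in the nilpotent filtration of $N$ to be annihilated by $\rho_{n+1}$, isolating the Frobenius piece that carries $\delta_n$.
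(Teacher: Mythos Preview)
Your argument is correct and is essentially the unpacking that the paper leaves implicit: the corollary is stated without proof, as an immediate consequence of Proposition~\ref{prop:alg_d1_niln} together with the algebraic model of Propositions~\ref{prop:alg_model_2col}--\ref{prop:compat_d1}. Your two steps (vanishing on the $\Lambda^2$-part, then killing the $S^2$-contribution in $\rho_{n+1}$ via $2n\geq n+2$) are exactly what is needed.

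One small point of phrasing: when you write ``$\Lambda^2 M$ embeds in $\cale_n M$ as the kernel of the projection of the pullback piece onto $R_{1/n} M$'', this is correct if one forms the pullback of $R_{1/n} M \to \Phi M \leftarrow \Gamma^2 M$ first and then the pushout; but it is cleaner to say that $d_1$ is, by construction in Proposition~\ref{prop:model_d1}, defined on the pushout $\cale_n M/\Lambda^2 M$ (as noted just before that proposition), so its vanishing on $\Lambda^2 M$ is immediate. Either way the conclusion stands.
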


There is an alternative viewpoint on the natural transformation
$\delta_n: \Phi \rho_n N \rightarrow \rho_{n+1}N$ for $N \in \nil_n$, 
based on the following result, in which $\Omega^n_1 : \unst \rightarrow \unst$
denotes the first left derived functor of the iterated loop functor 
$\Omega^n$. 

\begin{prop}
\label{prop:Omega_n_1}
 For $1 \leq n \in \nat$ and $s \geq n$, the functor $\Omega^n_1 : \unst
\rightarrow \unst$ restricts to 
\[
 \Omega^n_1 : \nil_s \rightarrow \nil_{2 (s-n)+1}.
\]
Moreover, for $N \in \obj \nil_n$ (so that $\Omega^n_1 N \in \nil_1$) 
\[
 \rho_1 (\Omega^n_1 N) \cong \Phi (\rho_n N).
\]
\end{prop}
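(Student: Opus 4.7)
The plan is to compute $\Omega^n_1 N$ via the formula of Lemma \ref{lem:diff_R1t}, which presents $\Omega^n N$ as the cokernel of $\Sigma d_{1/n} : \Sigma R_{1/n}(\Sigma^{-n} N) \to \Sigma^{-n} N$ in $\amod$. Since $\Sigma^{-n}$ is exact, one has $\Omega^n_i N = L_i \destab (\Sigma^{-n} N)$, and the guiding idea is that the two-term complex above extends (via the Lannes--Zarati formalism) to a genuine resolution computing these derived functors, identifying $\Omega^n_1 N$ with the destabilization of $\ker(\Sigma d_{1/n})$ in $\amod$.

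Granting this identification, the first assertion follows by tracking the almost-nilpotent filtration. For $N \in \nil_s$ with $s \geq n$, the module $\Sigma^{-n} N$ lies in $\alnil_{s-n}$ directly from Definition \ref{defn:alunst}. Proposition \ref{prop:alunst_stable_Phi_R} then gives $R_{1/n}(\Sigma^{-n} N) \in \alnil_{2(s-n)}$, so $\Sigma R_{1/n}(\Sigma^{-n} N)$ lies in $\alnil_{2(s-n)+1}$ by Proposition \ref{prop:alnil_serre_sigma}. Since $\alnil_{2(s-n)+1}$ is a Serre subcategory, the kernel of $\Sigma d_{1/n}$ belongs to it, and Proposition \ref{prop:destab_alnil} places its destabilization---hence $\Omega^n_1 N$---in $\nil_{2(s-n)+1}$.

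For the ``moreover'' statement one specialises to $s = n$, so that $\Omega^n_1 N$ lies in $\nil_1$, and computes $\rho_1(\Omega^n_1 N)$ modulo $\nil_2$. Here Proposition \ref{prop:alg_d1_niln} is tailor-made: modulo $\nil_2$ the map $\Sigma d_{1/n}$ factors across the natural projection $R_{1/n} \twoheadrightarrow \Phi$, which is an isomorphism mod $\nil$ by Lemma \ref{lem:R_truncated}, and reduces to $\delta_n : \Phi \rho_n N \to \rho_{n+1} N$ with reduced unstable target. Since the image of $\delta_n$ contributes to $\Omega^n N = \mathrm{coker}(\Sigma d_{1/n})$ rather than to $\Omega^n_1 N$, the reduced part of the kernel is exactly $\Phi \rho_n N$, which is already reduced by Lemma \ref{lem:Phi}, yielding the desired isomorphism $\rho_1(\Omega^n_1 N) \cong \Phi(\rho_n N)$.

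The hard part will be the first step: rigorously justifying that $\Omega^n_1 N$ is computed by the algebraic differential $\Sigma d_{1/n}$. The cokernel description provided by Lemma \ref{lem:diff_R1t} does not automatically upgrade to a derived-functor statement, and a full treatment requires either extending the two-term Singer complex to a higher resolution whose homology computes all of $L_* \destab(\Sigma^{-n} N)$, or an explicit comparison with a projective resolution of $N$ in $\unst$ via a suitable spectral sequence. Once that technical point is settled, the remainder of the argument is formal bookkeeping with the $\alnil_\bullet$ and $\nil_\bullet$ filtrations via the structural results already developed in Section \ref{sect:almost}.
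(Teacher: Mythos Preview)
Your approach is genuinely different from the paper's, and the gap you flag is real. The paper proceeds by induction on $n$ via the short exact sequence
\[
\Omega\,\Omega^{n-1}_1 \rightarrow \Omega^n_1 \rightarrow \Omega_1\,\Omega^{n-1}
\]
associated to the factorization $\Omega^n = \Omega \circ \Omega^{n-1}$ (the Grothendieck composite--functor sequence). The base case $n=1$ is \cite[Lemma~6.1.3]{schwartz_book}. For $M \in \nil_s$ with $s \geq n$, the inductive hypothesis gives $\Omega^{n-1}_1 M \in \nil_{2(s-n)+3}$, hence $\Omega\,\Omega^{n-1}_1 M \in \nil_{2(s-n)+2}$; and $\Omega^{n-1} M \in \nil_{s-n+1}$, so $\Omega_1\,\Omega^{n-1} M \in \nil_{2(s-n)+1}$ by the $n=1$ case. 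Closure of $\nil_{2(s-n)+1}$ under extensions finishes the first claim. The ``moreover'' follows from the same sequence: for $N \in \nil_n$ the left-hand term lies in $\nil_2$, so $\rho_1(\Omega^n_1 N) \cong \rho_1(\Omega_1\,\Omega^{n-1} N) \cong \Phi\,\rho_1(\Omega^{n-1} N) \cong \Phi\,\rho_n N$. This is considerably more direct than your route and requires no Singer machinery.

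Your proposed identification $\Omega^n_1 N \cong \destab\bigl(\ker \Sigma d_{1/n}\bigr)$ is not just a loose end: even if $\Sigma R_{1/n}(\Sigma^{-n} N)$ were known to be $\destab$-acyclic, the derived functor $L_1\destab$ is computed as the homology of $\destab$ applied to a resolution, not as $\destab$ of a kernel, and the two are not interchangeable without further argument. Your treatment of the ``moreover'' clause is also imprecise: the map $\delta_n$ is in general nonzero, so its kernel is a proper subobject of $\Phi\rho_n N$, and the sentence ``the reduced part of the kernel is exactly $\Phi\rho_n N$'' does not follow from what precedes it. The honest computation along your lines would give $\rho_0\bigl(\Sigma^{-1}\ker \Sigma d_{1/n}\bigr) \subset \Phi\rho_n N$, with equality requiring separate justification.
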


\begin{proof}
 The proof is by induction on $n$; for $n=1$ this is 
\cite[Lemma 6.1.3]{schwartz_book} (which also states that $\Omega_1$ 
takes values in $\nil_1$).
 The inductive step uses the short exact sequence $\Omega \Omega^{n-1}_1
\rightarrow \Omega^n_1 \rightarrow \Omega_1 \Omega^{n-1}$
associated to the identification $\Omega^n = \Omega \circ \Omega^{n-1}$ (see 
\cite{p_viasm} and the references therein). Namely, if $M \in \nil_{s}$ 
with $s \geq n$, then $\Omega^{n-1}_1 M \in \nil_{2 (s-(n-1))+1} = \nil_{2 
(s-n) 
+3}$, hence
  $\Omega \Omega^{n-1}_1 M \in \nil_{2(s-n)+2}$. Similarly, $\Omega^{n-1}M \in 
\nil_{s-(n-1)} = \nil _{(s-n)+1}$, hence $\Omega_1 \Omega^{n-1} M \in \nil_{2 
((s-n)+1)-1}= \nil_{2 (s-n) +1}$. 
The result follows, since $\nil_{2 (s-n) +1}$ is closed under extensions.
\end{proof}

\begin{cor}
\label{cor:les_Omegan}
 For $1 \leq n \in \nat$ and $N \in \obj \nil_n$, there is a natural exact
sequence
\[
 \Sigma \Phi \rho_{n} M 
\stackrel{\Sigma \delta_n} {\rightarrow} 
\Sigma \rho_{n+1} M 
\rightarrow 
\Omega^n(M/\nlp_{n+2}M) 
\rightarrow 
\rho_n M
\rightarrow 
0.
\]
\end{cor}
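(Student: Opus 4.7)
The plan is to obtain the four-term sequence as (part of) the long exact sequence of derived functors of the right-exact iterated loop functor $\Omega^n:\unst\to\unst$, applied to a short exact sequence coming from the nilpotent filtration of $M$.

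First, since $M\in\nil_n$, the nilpotent filtration provides a short exact sequence in $\unst$
\[
0\to \nlp_{n+1}M/\nlp_{n+2}M\to M/\nlp_{n+2}M\to M/\nlp_{n+1}M\to 0,
\]
which rewrites as
\[
0\to \Sigma^{n+1}\rho_{n+1}M\to M/\nlp_{n+2}M\to \Sigma^n\rho_nM\to 0.
\]
Applying $\Omega^n$ together with its first left derived functor $\Omega^n_1$ yields the four-term exact sequence
\[
\Omega^n_1(\Sigma^n\rho_nM)\to \Omega^n(\Sigma^{n+1}\rho_{n+1}M)\to \Omega^n(M/\nlp_{n+2}M)\to \Omega^n(\Sigma^n\rho_nM)\to 0.
\]
Since $\Omega^n$ is left adjoint to the fully faithful $\Sigma^n$, one has $\Omega^n\Sigma^n\cong\id$, so $\Omega^n(\Sigma^n\rho_nM)\cong\rho_nM$ and $\Omega^n(\Sigma^{n+1}\rho_{n+1}M)=\Omega^n\Sigma^n(\Sigma\rho_{n+1}M)\cong\Sigma\rho_{n+1}M$.

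Next I would rewrite the leftmost term. By Proposition \ref{prop:Omega_n_1}, $\Omega^n_1(\Sigma^n\rho_nM)$ lies in $\nil_1$ with reduced quotient $\rho_1\cong\Phi\rho_nM$. Since $\rho_{n+1}M$ is reduced, one has $\nlp_2(\Sigma\rho_{n+1}M)=0$, hence any morphism from an object of $\nil_1$ to $\Sigma\rho_{n+1}M$ factors uniquely through its quotient by $\nlp_2$, namely $\Sigma\Phi\rho_nM$. This factorisation has the same image, so substituting it in preserves exactness at $\Sigma\rho_{n+1}M$ and produces the claimed four-term exact sequence.

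The main obstacle is identifying the induced morphism $\Sigma\Phi\rho_nM\to \Sigma\rho_{n+1}M$ with $\Sigma\delta_n$. For this I would use the explicit model for $\Omega^n_1$ underlying the proof of Proposition \ref{prop:Omega_n_1}: the iterated short exact sequences relating $\Omega^k$ and $\Omega^{k-1}$, combined with the truncated Singer functor $R_{1/n}$, show that the connecting morphism in the long exact sequence is computed by the algebraic differential $d_{1/n}$ of Lemma \ref{lem:diff_R1t}. Passing to the reduced quotients $\Phi\rho_nM$ on the source and $\rho_{n+1}M$ on the target, this is exactly the morphism $\delta_n$ described in Proposition \ref{prop:alg_d1_niln} (cf. Remark \ref{rem:identify_delta}), which completes the identification.
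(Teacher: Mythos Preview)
Your proposal is correct and follows essentially the same approach as the paper: apply the long exact sequence of $\Omega^n$ and its first derived functor to the short exact sequence $0\to\Sigma^{n+1}\rho_{n+1}M\to M/\nlp_{n+2}M\to\Sigma^n\rho_nM\to 0$, then use Proposition~\ref{prop:Omega_n_1} to replace $\Omega^n_1(\Sigma^n\rho_nM)$ by its quotient $\Sigma\Phi\rho_nM$. Your write-up in fact supplies more detail than the paper's terse proof, particularly on the factorization argument and the identification of the connecting morphism with $\Sigma\delta_n$.
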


\begin{proof}
 This follows by considering the long exact sequence for $\Omega^n_\bullet$
associated to the short exact sequence
\[
 0
\rightarrow 
\Sigma^{n+1} \rho_{n+1} M 
\rightarrow 
M/\nlp_{n+2}M 
\rightarrow 
\Sigma^n \rho_n M
\rightarrow 
0
\]
together with the factorization provided by Proposition \ref{prop:Omega_n_1}.
\end{proof}

\section{Essential extensions}
\label{sect:essential}

Let $\field$ be a finite field  and recall that $\f$ is the
category
of functors from finite-dimensional $\field$-vector spaces to $\field$-vector
spaces.
 
The aim of this section is to give a generalization of the following
result:

\begin{thm}
 \cite[Theorem 4.8]{KIII}
 For $F$ a non-constant finite functor, precomposition with $F$ induces a 
(naturally
split) monomorphism:
 \[
  \ext_\f^* (G, H) 
  \hookrightarrow 
  \ext_\f^* (G \circ F, H \circ F).
 \]
\end{thm}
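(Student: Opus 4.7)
The plan is to follow Kuhn's argument in \cite{KIII}. The proof splits into three main parts: defining the map, constructing a natural candidate retraction via adjunction, and verifying that this candidate is a true retraction using the hypotheses on $F$.

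First, I would observe that precomposition $F^\ast := (-) \circ F : \f \to \f$ is an exact endofunctor, since short exact sequences in $\f$ are characterized pointwise and $F$ takes values in finite-dimensional vector spaces. Hence $F^\ast$ induces a well-defined graded map on $\ext^\ast$, computed by applying $F^\ast$ to an injective resolution of $H$ in $\f$ and taking cohomology. Equally, one can invoke the Yoneda-style description of $\ext^\ast$ by $n$-fold extensions, where the morphism is induced termwise by $F^\ast$.

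Second, to produce the natural splitting, I would exploit the adjoint pair $(F_!, F^\ast)$, where $F_!$ denotes the left Kan extension along $F$. The counit $\epsilon_G : F_! F^\ast G \to G$ is natural in $G$, and combining it with the adjunction isomorphism $\hom_\f(G \circ F, H \circ F) \cong \hom_\f(F_! F^\ast G, H)$ yields a candidate natural retraction on $\ext^\ast$. The desired splitting then amounts to exhibiting a natural section of $\epsilon$ whose composite with the map induced by $F^\ast$ on Ext is the identity; naturality in the pair $(G, H)$ will follow formally from the adjunction data, once such a section exists.

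The key step, and the main obstacle, is to construct such a section. I would proceed by dévissage on a composition series of $F$: exactness of $F^\ast$ combined with the five-lemma applied to the Ext long exact sequences propagates the splitting from subquotients to $F$, reducing to the case where $F$ is simple and non-constant. For simple non-constant $F$, the classification of simple functors in $\f$ (in terms of irreducible polynomial representations of general linear groups) together with the idempotent decomposition of tensor powers via Schur--Weyl duality should permit an explicit construction of the section. The non-constancy hypothesis is essential here: a constant functor $F$ factors through a point, so $F^\ast G$ loses all polynomial information about $G$ and no retraction can exist. The finiteness hypothesis is what makes the dévissage terminate and allows reduction to simple factors.
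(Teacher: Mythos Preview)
Your proposal diverges from the paper's (that is, Kuhn's) argument in a way that introduces a genuine gap.

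The paper, following \cite{KIII}, does not use the left Kan extension adjunction at all. Instead, the key step is \cite[Lemma 4.12]{KIII}: for a non-constant finite functor $F$, there exists a finite functor $E$ such that the identity functor $\id$ is a natural direct summand of $F\circ E$. Concretely, one takes $E(V)=V\oplus\field^{\,d-1}$ where $d$ is the polynomial degree of $F$, so that $\Delta^{d-1}F$ is a direct summand of $F\circ E$; since $\Delta^{d-1}F$ is non-constant of degree $\leq 1$ and the category $\f_1$ is semisimple, $\id$ splits off. Given such a splitting $\id\hookrightarrow F\circ E\twoheadrightarrow\id$, the retraction of
\[
\ext^*_\f(G,H)\longrightarrow \ext^*_\f(G\circ F,H\circ F)
\]
is obtained by further precomposing with $E$ and then projecting to the $\id$-summand of $F\circ E$. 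This is elementary once the lemma is in hand.

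Your proposed route has a real problem at the d\'evissage step. You want to reduce from a general finite $F$ to its simple composition factors via a five-lemma argument on long exact sequences in Ext. But the assignment $F\mapsto G\circ F$ is \emph{not} exact in $F$ unless $G$ is additive; for a short exact sequence $0\to F'\to F\to F''\to 0$ there is no long exact sequence relating $\ext^*_\f(G\circ F',H\circ F')$, $\ext^*_\f(G\circ F,H\circ F)$, and $\ext^*_\f(G\circ F'',H\circ F'')$. So the induction on a composition series of $F$ cannot be set up as you describe. Likewise, the counit $\epsilon_G:F_!F^*G\to G$ does not vary exactly in $F$, so there is no five-lemma available there either.

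Even granting the reduction, your treatment of the simple case is only a sketch: invoking ``Schur--Weyl duality'' does not by itself produce a natural section of $\epsilon$, and you would still need, in effect, to manufacture something equivalent to the splitting $\id\,\big|\,F\circ E$ that Kuhn constructs directly. I recommend you abandon the Kan-extension framework here and instead prove the existence of $E$ with $\id$ a summand of $F\circ E$, using the difference functor $\Delta$ and the semisimplicity of $\f_1$, as in the paper's proof of Theorem~\ref{thm:split_mono_gen}.
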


This is refined  by using the observation that the result only
depends on the top polynomial degree behaviour of $F$. 

\begin{rem}
For  $f : F_1 \rightarrow F_2$  a morphism between functors taking
finite-dimensional values, for any morphism 
$\alpha : G \rightarrow H$, there is a commutative diagram:
\[
 \xymatrix{
 G \circ F_1 
 \ar[r]^{\alpha_{F_1}}
 \ar[d]_{G f}
 &
 H \circ F_1
 \ar[d]^{Hf}
 \\
  G \circ F_2 
 \ar[r]_{\alpha_{F_2}}
 &
 H \circ F_2
 }
\]
which corresponds to the two (equivalent) ways to define a natural
transformation: $\hom_{\f} (G, H) \rightarrow \hom_{\f} (G\circ F_1, H\circ
F_2)$. 
By naturality this extends to $\ext^* _\f$.
\end{rem}

Recall that a functor $F$ has polynomial degree exactly $d$ if it is polynomial
of degree $d$ but not of degree $d-1$ (ie $F \in \f_d \backslash \f_{d-1}$).

\begin{thm}
\label{thm:split_mono_gen}
 For $f: F_1\rightarrow F_2$ a morphism between finite functors of 
polynomial degree exactly $d>0$,  if $\mathrm{image}(f)$ has polynomial degree 
exactly
$d$, then  precomposition with $F_1$ together with $f$  induce a (naturally 
split)
monomorphism:
 \[
  \ext_\f^* (G, H) 
  \hookrightarrow 
  \ext_\f^* (G \circ F_1, H \circ F_2).
 \]
\end{thm}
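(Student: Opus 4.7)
The plan is to reduce to Kuhn's Theorem 4.8 of KIII by factoring $f$ through its image. Write $f = \iota \circ \pi$ with $F_1 \xrightarrow{\pi} F \xrightarrow{\iota} F_2$, where $F := \mathrm{image}(f)$, so $\pi$ is surjective and $\iota$ is injective. By hypothesis $F$ is finite of polynomial degree exactly $d>0$, hence non-constant, and Kuhn's cited theorem yields a naturally split monomorphism
\[
\phi_F : \ext^*_\f(G, H) \hookrightarrow \ext^*_\f(G \circ F, H \circ F),
\]
with a natural retraction $\sigma_F$. The map $\phi_f$ of the theorem factors as
\[
\ext^*_\f(G, H) \xrightarrow{\phi_F} \ext^*_\f(G \circ F, H \circ F) \xrightarrow{\Psi} \ext^*_\f(G \circ F_1, H \circ F_2),
\]
where $\Psi$ is induced by pre-composing with $G\pi$ and post-composing with $H\iota$. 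It therefore suffices to construct a natural map $\Theta$ in the reverse direction of $\Psi$ such that $\sigma_F \circ \Theta \circ \Psi \circ \phi_F = \mathrm{id}$.

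The core step is to build $\Theta$ using the fact that $F_1$, $F$ and $F_2$ all have polynomial degree exactly $d$. The maps $\pi$ and $\iota$ then induce a surjection and an injection on the (non-zero) top cross-effects $\mathrm{cr}_d(-)$, which are $\field[\symm_d]$-modules. Modulo functors of polynomial degree $<d$, both maps split: the quotient $\f_d/\f_{d-1}$ is equivalent, on finite functors, to the category of finite-dimensional $\field[\symm_d]$-modules via the top cross-effect, and in that semisimple-in-spirit setting $\pi$ acquires a section and $\iota$ a retraction. Lifting these back to $\f$ provides natural transformations $G \circ F \to G \circ F_1$ and $H \circ F_2 \to H \circ F$ whose composites with $G\pi$ and $H\iota$ respectively equal the identity \emph{modulo} morphisms factoring through a functor of polynomial degree strictly less than $d$. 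Post- and pre-composing an extension representing a class in $\ext^*_\f(G\circ F_1, H \circ F_2)$ with these transformations defines $\Theta$ on the nose.

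The main obstacle is controlling this lower-polynomial-degree ambiguity at the level of $\ext^*$. It should be handled by inspection of Kuhn's construction of $\sigma_F$: the retraction $\sigma_F$ is essentially extracted from the top cross-effect $\mathrm{cr}_d(F)$, and is therefore insensitive to modification of the transformations $G \circ F \to G\circ F_1$ and $H \circ F_2 \to H \circ F$ by natural transformations whose image has polynomial degree $<d$. Once this insensitivity is verified, $\sigma_F \circ \Theta$ is independent of the lifts chosen and provides the sought natural retraction of $\phi_f$, establishing the split monomorphism.
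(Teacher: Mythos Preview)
Your reduction to the image $F=\mathrm{image}(f)$ and the appeal to Kuhn's Theorem~4.8 for $F$ are reasonable, but the construction of $\Theta$ does not go through. The key claim is that $\pi:F_1\twoheadrightarrow F$ and $\iota:F\hookrightarrow F_2$ split ``modulo functors of polynomial degree $<d$'', because the quotient $\f_d/\f_{d-1}$ is ``semisimple-in-spirit''. It is not. The $d$th cross-effect identifies $\f_d/\f_{d-1}$ (on finite objects) with finite $\field[\symm_d]$-modules, and for $p\leq d$ this category is \emph{not} semisimple; generic epimorphisms and monomorphisms there do not split. Even when a splitting happens to exist in $\f_d/\f_{d-1}$, a morphism in the quotient category need not lift to an honest natural transformation in $\f$, so the ``natural transformations $G\circ F\to G\circ F_1$ and $H\circ F_2\to H\circ F$'' you want are not actually produced. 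The subsequent appeal to $\sigma_F$ being ``insensitive'' to lower-degree modifications is then moot: you have no well-defined $\Theta$ to feed into it, and in any case that insensitivity is asserted rather than proved.

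The paper avoids this by working one degree at a time in the \emph{opposite} direction: rather than passing to the quotient $\f_d/\f_{d-1}$, it precomposes with a finite functor $E$ (coming from Kuhn's Lemma~4.12, essentially $V\mapsto V\oplus\field^{d-1}$) so that $\Delta^{d-1}F_i$ appears as a natural direct summand of $F_i\circ E$. The point is that $\Delta^{d-1}F_1$, $\Delta^{d-1}F$, $\Delta^{d-1}F_2$ are all non-constant of polynomial degree $\leq 1$, and the category $\f_1$ of such functors \emph{is} genuinely semisimple. There the surjection $\Delta^{d-1}\pi$ and the injection $\Delta^{d-1}\iota$ do split, yielding an honest factorization
\[
\id \hookrightarrow F_1\circ E \xrightarrow{f_E} F_2\circ E \twoheadrightarrow \id
\]
with composite the identity, and the remainder of Kuhn's argument applies verbatim. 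In short, the semisimplicity you need lives in degree $1$, not in degree $d$; your proposal puts it in the wrong place.
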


\begin{proof}
 The proof follows that of  \cite[Theorem 4.8]{KIII}, which relies upon the fact
that the full subcategory of functors of polynomial degree at most $1$ (which
contains the constant and additive functors) is semisimple. 
 The hypotheses ensure that $\Delta^{d-1}F_1$ and $\Delta^{d-1}F_2$ are
non-constant functors of polynomial degree $\leq 1$ and that $\Delta^{d-1}f$ maps to a non-constant functor. 
 
 Applying \cite[Lemma 4.12]{KIII}, there exists a finite functor $E$ such that
the identity functor $\id$ is a direct summand of $\mathrm{image}(f)
\circ E$. (The proof of the lemma is based on the fact that $\Delta^{d-1} F$ is a natural
direct summand 
of the functor $V \mapsto F (V \oplus \field^{d-1})$, for $F \in \obj \f$.)

By semi-simplicity the splitting factors:
 \[
  \xymatrix{
  \id
  \ar@{^(->}[r]
  \ar@/_2pc/[rrrr]^{1}
  & 
  F_1 \circ E 
  \ar@{->>}[r]
  \ar@/^1pc/[rr]^{f_E}
  &
  \mathrm{image}(f) \circ E
  \ar@{^(->}[r]
  &
  F_2 \circ E
  \ar@{->>}[r]
  &\id.
  }
 \]
The proof is completed, {\em mutatis mutandis}, as in \cite[Section 4.3]{KIII}.
\end{proof}

\begin{exam}
\label{exam:split_mono_ses_frob}
 Consider a finite functor $F$ of polynomial degree exactly $d>0$ and let
$q_{d-1} F$ be the largest quotient of $F$ of polynomial degree $\leq d-1$, with
 associated short exact sequence:
 \[
  0 
  \rightarrow 
  \overline{F} 
  \rightarrow 
  F
  \rightarrow 
  q_{d-1} F
  \rightarrow 
  0
 \]
where $\overline{F} \hookrightarrow F$ satisfies the hypotheses of Theorem
\ref{thm:split_mono_gen}. The theorem shows that:
 \[
  \ext_\f^* (G, H) 
  \hookrightarrow 
  \ext_\f^* (G \circ \overline{F}, H \circ F).
 \]
 is a split monomorphism.
 
 Taking $\field = \field_2$,  there is a  non-zero class $\phi \in \ext^1_\f
(\Lambda^2 , \id) $ given by the short exact sequence (\ref{eqn:frob_ses}).
Applying the above
result gives a pull-back diagram of short exact sequences
 \[
  \xymatrix{
  0\ar[r]
  &
  F 
  \ar@{=}[d]
  \ar[r]
  &
  \cale 
  \ar[r]
    \ar@{^(->}[d]
  &
  \Lambda^2 \circ \overline{F} 
  \ar[r]
  \ar@{^(->}[d]
  &
  0
  \\
  0\ar[r]
  &
  F 
  \ar[r]
  &
  S^2 \circ F 
  \ar[r]
  &
  \Lambda^2 \circ F 
  \ar[r]
  &
  0
  }
 \]
in which both short exact sequences are essential, in particular the top row
corresponds to a non-zero class of $  \ext_\f^1 (\Lambda^2 \circ \overline{F}, 
F)$.
\end{exam}

For $\field=\field_2$, recall the extension classes from Section
\ref{subsect:basic_functors}, which are related by the Frobenius morphism $\id \rightarrow
S^2$:
\[
\tilde{e}_1 \in \ext^2_\f (S^2, \id) \mapsto e_1 \in \ext^2 _\f(\id, \id).
\]

\begin{cor}
\label{cor:classes_S2_non-trivial}
For $\field= \field_2$ and  $\tilde{F} \subset F$  finite functors of polynomial degree exactly 
$d>0$, the following classes are non-trivial 
 \begin{enumerate}
  \item 
  $i^* (e_1 \circ F) \in \ext^2 _\f (\tilde{F} , F);$
  \item 
  $i^* (\tilde{e}_1 \circ F) \in \ext^2 _\f (S^2 \circ \tilde{F} , F),$
 \end{enumerate}
where $i^*$ denotes the pullback induced by the inclusion $i : \tilde{F}
\hookrightarrow F$.
 
Moreover, under the Frobenius morphism $\id \rightarrow S^2$, these classes are
related by 
\[
 i^* (\tilde{e}_1 \circ F) \in \ext^2 _\f (S^2 \circ \tilde{F} , F) \mapsto i^*
(e_1 \circ F) \in \ext^2 _\f (\tilde{F} , F).
\]
\end{cor}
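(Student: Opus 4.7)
The key observation is that the inclusion $i : \tilde{F} \hookrightarrow F$ is a morphism satisfying the hypotheses of Theorem \ref{thm:split_mono_gen}: both $\tilde{F}$ and $F$ are finite of polynomial degree exactly $d>0$, and $\mathrm{image}(i)=\tilde{F}$ also has polynomial degree exactly $d$. Applying Theorem \ref{thm:split_mono_gen} to $i$ therefore yields, for any $G, H \in \obj \f$, a naturally split monomorphism
\[
 \ext^*_\f (G, H) \hookrightarrow \ext^*_\f (G \circ \tilde{F}, H \circ F),
\]
and, as explained in the remark preceding Theorem \ref{thm:split_mono_gen}, a class $\alpha$ is sent to the pullback along $Gi : G\circ \tilde{F} \to G \circ F$ of the precomposition $\alpha \circ F$ (equivalently, to the pushforward along $Hi$ of $\alpha \circ \tilde{F}$).

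For part (1), take $G=H=\id$: the image of $e_1 \in \ext^2_\f (\id, \id)$ under the monomorphism is precisely $i^* (e_1 \circ F)$, which is therefore non-trivial since $e_1 \neq 0$. For part (2), take $G = S^2$ and $H = \id$: the image of $\tilde{e}_1 \in \ext^2_\f(S^2, \id)$ is $i^*(\tilde{e}_1 \circ F) \in \ext^2_\f(S^2 \circ \tilde{F}, F)$ (where $i^*$ denotes the pullback along $S^2 i : S^2 \circ \tilde{F} \to S^2 \circ F$), hence is non-trivial since $\tilde{e}_1 \neq 0$.

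The final compatibility is a naturality statement. The split monomorphism of Theorem \ref{thm:split_mono_gen} is natural in $G$, so the Frobenius natural transformation $\id \to S^2$ induces a commutative square
\[
 \xymatrix{
 \ext^2_\f (S^2, \id) \ar[r]\ar[d] & \ext^2_\f (S^2 \circ \tilde{F}, F) \ar[d]\\
 \ext^2_\f (\id, \id) \ar[r] & \ext^2_\f (\tilde{F}, F)
 }
\]
in which the rows are the split monomorphisms just described. By the pullback diagram (\ref{eqn:ext2_pullback_diag}), the left-hand vertical arrow sends $\tilde{e}_1$ to $e_1$; chasing around the square gives the required relation $i^*(\tilde{e}_1 \circ F) \mapsto i^*(e_1 \circ F)$.

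The whole argument is essentially an unwinding of Theorem \ref{thm:split_mono_gen} and its naturality; the only point requiring a little care is identifying the image of the abstract split injection with the explicit classes $i^*(e_1 \circ F)$ and $i^*(\tilde{e}_1 \circ F)$, which follows directly from the description of the monomorphism in the remark preceding the theorem.
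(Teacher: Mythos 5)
Your argument is correct and is essentially the paper's intended proof: the paper simply says the corollary ``follows directly from Theorem \ref{thm:split_mono_gen}, as in Example \ref{exam:split_mono_ses_frob}'', and you have spelled out exactly how — applying the theorem to $f = i$ with $(G,H) = (\id,\id)$ and $(S^2,\id)$, identifying the image of the split monomorphism as $i^*(-\circ F)$ via the remark preceding the theorem, and invoking naturality together with the pullback diagram (\ref{eqn:ext2_pullback_diag}) for the final compatibility under Frobenius.
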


\begin{proof}
 Follows directly from Theorem \ref{thm:split_mono_gen}, as in Example
\ref{exam:split_mono_ses_frob}.
\end{proof}

\section{The Main results}
\label{sect:main}

This section gives the proofs of the main results of the paper, Theorems 
\ref{thm:main} and \ref{thm:main_n=1}, 
based upon the non-triviality result of Section \ref{sect:essential}. Namely an 
obstruction class
$\omega_X $ living in a suitable group $\ext^2_\f (-,-)$ is introduced, which 
must vanish. Combined 
with the non-vanishing result Theorem \ref{thm:split_mono_gen}, this provides 
restrictions on the structure of 
$H^*(X)$, in particular on the relationship between the first two non-trivial 
layers of its nilpotent filtration.

\subsection{Compatibility with the $\star$-product}

In the following, note that \cite[Proposition 4.1]{Kuhn_nonrealization} shows 
that the spectral sequence associated to the Arone-Goodwillie tower is a 
spectral sequence 
of bigraded algebras with respect to the $\star$-product.

\begin{prop}
\label{prop:compatibility_star_cup}
 For $X$ an $n$-connected space with $H^* (X)$ of finite type, there are 
identifications via the 
Arone-Goodwillie spectral sequence calculating 
 $H^* (\Omega^n X)$:
 \begin{eqnarray*}
  F_1 H^* (\Omega^n X) &\cong& \Sigma E^{-1,*}_\infty \\
  F_2 H^* (\Omega^n X) / F_1 H^* (\Omega^n X) &\cong& \Sigma E^{-2,*}_\infty 
 \end{eqnarray*}
and the morphism 
\[
 \overline{\cup} : \Lambda^2 (F_1 H^* (\Omega^n X)) \rightarrow  F_2 H^* 
(\Omega^n X) / F_1 H^*( \Omega^n X)
\]
coincides with the morphism induced by the $\star$-product in the spectral 
sequence.
\end{prop}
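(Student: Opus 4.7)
The plan is as follows. The first two identifications are a direct consequence of the strong convergence of the Arone--Goodwillie spectral sequence established in Section \ref{sect:tower}: the associated graded of the increasing filtration $F_\bullet H^*(\Omega^n X)$ is $\bigoplus_j \Sigma^j E^{-j,*}_\infty$, and since $E^{0,*}_1 = 0$ forces $F_0 H^*(\Omega^n X) = 0$, the identifications in filtration degrees $1$ and $2$ follow immediately.

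To identify $\overline{\cup}$ with the $\star$-pairing, I would invoke the multiplicativity of the spectral sequence with respect to $\star$, established in \cite[Proposition 4.1]{Kuhn_nonrealization}. Multiplicativity asserts that $(E_r^{*,*}, d_r)$ is a bigraded algebra spectral sequence converging to the cup product on $H^*(\Omega^n X)$; in particular the cup product respects the filtration $F_\bullet$ and the induced pairing on the associated graded is the $\star$-pairing on $E_\infty$. Restricting to bidegrees $(-1)\otimes(-1)\to(-2)$ identifies the cup product $F_1 H^*(\Omega^n X) \otimes F_1 H^*(\Omega^n X) \to F_2 H^*(\Omega^n X)$, composed with the projection to $F_2/F_1$, with the $\star$-pairing $E^{-1,*}_\infty \otimes E^{-1,*}_\infty \to E^{-2,*}_\infty$. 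Graded commutativity of $\star$ then forces a factorization through $S^2(\Sigma E^{-1,*}_\infty)$; and at $p=2$ the cup square of a class $x \in F_1 H^*(\Omega^n X)$ equals $Sq^{|x|}(x)$, which lies in $F_1 H^*(\Omega^n X)$ because $F_1$ is stable under $\cala$. Hence the restriction of the pairing to the Frobenius subobject $\Phi \subset S^2$ becomes trivial modulo $F_1$, so the map factors further through $\Lambda^2$ via the sequence of Lemma \ref{lem:U_ses_Frobenius}.

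It then remains to compare the resulting factored map with the $\overline{\cup}$ of Remark \ref{rem:cup_product}: by the universal property of the Massey--Peterson enveloping algebra, the cup product on $H^*(\Omega^n X)$ extends to a canonical morphism $F_2 \overline{U F_1 H^*(\Omega^n X)} \to F_2 H^*(\Omega^n X)$, whose quotient by $F_1 H^*(\Omega^n X)$ is by construction $\overline{\cup}$; the previous step identifies this quotient with the $\star$-pairing, which yields the desired compatibility. The main technical obstacle is bookkeeping: verifying that the multiplicativity statement of \cite{Kuhn_nonrealization} is compatible with the suspension conventions used here, and that the Frobenius subobject of $S^2$ genuinely absorbs the cup squares inside the filtration rather than producing a nontrivial contribution on the associated graded.
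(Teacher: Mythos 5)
Your proposal is correct and takes essentially the same route as the paper: the paper's proof is a one-line citation of the multiplicativity of the spectral sequence established by Ahearn--Kuhn, as used in Kuhn's nonrealization paper, and your argument simply unpacks what that citation entails (convergence giving the $E_\infty$-identifications; $\star$-multiplicativity identifying $\overline{\cup}$). The cup-square/Frobenius discussion you add is not strictly needed for the Proposition itself, since the factorization of the cup pairing through $\Lambda^2$ on the associated graded is already built into the construction of $\overline{\cup}$ in Remark \ref{rem:cup_product} via Lemma \ref{lem:F_2UM}, but it is a correct re-derivation of that fact.
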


\begin{proof}
 The result follows from the compatibility of the spectral sequence with the 
multiplicative structure, established by Ahearn and Kuhn \cite{AK},
  as used in \cite{Kuhn_nonrealization}.
 \end{proof}

 \subsection{Working modulo nilpotents}
 
 Throughout this section, the following is supposed: 

\begin{hyp}
\label{hyp:X_niln}
For fixed $1 \leq n \in \nat$, $X$ is an $n$-connected space  such that  $H^* 
(X)\in \nil_n$ and is of finite type.
\end{hyp}

The first condition ensures strong convergence of the spectral sequence 
calculating 
$H^* (\Omega^n X)$ and the 
second specifies the class of spaces of interest here. In particular, there is
a 
surjection 
$
 H^* (X) 
 \twoheadrightarrow 
 \Sigma^n \rho_n H^* (X).
$

\begin{nota}
 For $j \in \{1, 2 \}$, set 
$
  F_j := l\big( F_j H^* (\Omega^n X)\big).
  $
\end{nota}

Corollary \ref{cor:good_auss_d1}  implies:

\begin{lem}
\label{lem:identify_F1}
 There is a natural isomorphism 
 $
  F_1 \cong l \big(\rho_n H^* (X)\big).
 $
\end{lem}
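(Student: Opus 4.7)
The plan is to use the Arone-Goodwillie spectral sequence and the good almost unstability of its columns to show that $F_1 H^*(\Omega^n X)$ differs from $\rho_n H^*(X)$ only by a nilpotent unstable module, so that applying the exact functor $l : \unst \to \f$ yields the required isomorphism in $\f$.

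Under Hypothesis \ref{hyp:X_niln}, Corollary \ref{cor:d1_niln} ensures that the Arone-Goodwillie spectral sequence is good almost unstable. Since $E_1^{-1,*} = \Sigma(\Sigma^{-n}H^*(X))$ with $H^*(X) \in \nil_n$, Corollary \ref{cor:good_auss_d1} (applied with $t=n$ and $M = H^*(X)$) gives
$$\rho_0\bigl(\Sigma^{-1}E_\infty^{-1,*}\bigr) \cong \rho_n H^*(X),$$
while Proposition \ref{prop:compatibility_star_cup} identifies $F_1 H^*(\Omega^n X)$, up to the conventional suspension normalization, with $E_\infty^{-1,*}$. Together, these produce a natural surjection $F_1 H^*(\Omega^n X) \twoheadrightarrow \rho_n H^*(X)$ whose kernel is the destabilized image of the incoming differentials landing in the $(-1)$-column. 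The sources of those differentials are good almost unstable modules in $\alnil_1$, so Proposition \ref{prop:destab_alnil} places the destabilized images in $\nil_1$; hence the kernel of the surjection lies in $\nil$.

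To finish, since $l$ factors through $\unst/\nil$, it annihilates the nilpotent kernel, so applying $l$ to the surjection converts it into an isomorphism in $\f$, giving the desired natural identification
$$F_1 = l\bigl(F_1 H^*(\Omega^n X)\bigr) \cong l\bigl(\rho_n H^*(X)\bigr).$$
The main delicacy lies in bookkeeping the suspension shifts between the bigraded spectral sequence, the $(-1)$-column (which on the nose is only an $\cala$-module rather than an unstable module), and the unstable filtration on $H^*(\Omega^n X)$; this is exactly the situation for which the almost unstable framework of Section \ref{sect:almost} was devised, so no extra structural input is needed beyond the results already proved.
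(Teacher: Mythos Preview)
Your approach is the same as the paper's: the lemma is an immediate consequence of Corollary~\ref{cor:good_auss_d1}, and you correctly invoke it (together with Corollary~\ref{cor:d1_niln} to verify the good almost unstable hypothesis). Once Corollary~\ref{cor:good_auss_d1} gives $\rho_0\big(\Sigma^{-1}E_\infty^{-1,*}\big)\cong\rho_n H^*(X)$ and one identifies $\Sigma^{-1}E_\infty^{-1,*}$ with $F_1 H^*(\Omega^n X)$, the statement follows immediately upon applying $l$, since by definition $\rho_0 M = M/\nlp_1 M$ and $l$ annihilates $\nil_1$.

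There is, however, a confused sentence in the middle of your argument. You write that the kernel of the surjection $F_1 H^*(\Omega^n X)\twoheadrightarrow\rho_n H^*(X)$ is ``the destabilized image of the incoming differentials landing in the $(-1)$-column''. This is not correct: the images of incoming differentials constitute the kernel of the \emph{other} surjection, namely $\Sigma^{-1}E_1^{-1,*}\twoheadrightarrow\Sigma^{-1}E_\infty^{-1,*}$ (there are no outgoing differentials from the $(-1)$-column). The kernel of $F_1 H^*(\Omega^n X)\twoheadrightarrow\rho_n H^*(X)$ is simply $\nlp_1\big(F_1 H^*(\Omega^n X)\big)$, which lies in $\nil_1$ by definition of $\rho_0$; no appeal to Proposition~\ref{prop:destab_alnil} or to the structure of the differentials is needed at this point. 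That machinery was already used inside the proof of Corollary~\ref{cor:good_auss_d1} to establish the identification of $\rho_0$ of the $E_\infty$-term, and you should not re-derive it. Deleting that sentence (and the one following it) leaves a clean and correct proof.
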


For clarity of presentation, the case $n=1$ is postponed to Section 
\ref{subsect:n=1}.

\begin{nota}
 Set $K := l \big(\mathrm{ker} \{ \Phi \rho_n H^* 
(X)\stackrel{\delta_n}{\rightarrow} \rho_{n+1} H^* (X) \}\big)$, 
 so that $K \subset F_1$ 
 and write $\widetilde{\Gamma^2 \circ F_1}$ for the functor defined by the 
pullback 
diagram:
 \[
  \xymatrix{
  0
  \ar[r]
  &
  \Lambda^2\circ F_1 
  \ar[r]
  \ar@{=}[d]
  &
  \widetilde{\Gamma^2\circ F_1}
    \ar@{^(->}[d]
    \ar[r]
    &
K
\ar@{^(->}[d] 
  \ar[r]
  &
  0
  \\
  0
   \ar[r]
  &
  \Lambda^2\circ F_1 
  \ar[r]
  &
 \Gamma^2\circ F_1
    \ar[r]
    &
 F_1    
  \ar[r]
  &
  0
  }
 \]
\end{nota}

\begin{lem}
\label{lem:identify:F2/F1}
 For $n \geq 2$ there is an isomorphism
 $
  F_2/ F_1 \cong \widetilde{\Gamma^2 \circ F_1 } 
 $
and the morphism induced by $\overline{\cup} : \Lambda^2 ( F_1 H^* (\Omega^n
X)) 
\rightarrow  F_2 H^* (\Omega^n X) / F_1 H^* (\Omega^n X)$ identifies with 
the inclusion 
$
 \Lambda^2 \circ F_1 \hookrightarrow \widetilde{\Gamma^2 \circ F_1}.
$
\end{lem}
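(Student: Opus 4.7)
The plan is to apply the exact functor $l \colon \unst \to \f$ to the isomorphism $F_2 H^*(\Omega^n X) / F_1 H^*(\Omega^n X) \cong \Sigma E^{-2,*}_\infty$ supplied by Proposition~\ref{prop:compatibility_star_cup}, then to invoke the explicit description of $\rho_0$ of the $(-2)$-column furnished by Sections~\ref{sect:auss} and~\ref{sect:cohomEP}. Since $l$ vanishes on $\alnil_1$ (by Corollary~\ref{cor:alnil_1_red}), it factors through the reduced quotient, so one is reduced to computing $l\bigl(\rho_0(\Sigma^{-2} E_\infty^{-2,*})\bigr)$.

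For $n \geq 2$, Corollary~\ref{cor:d1_ngeq2} identifies the $d_1$-induced map on reduced quotients with the composite
\[
\Gamma^2(\rho_n H^*(X)) \twoheadrightarrow \Phi(\rho_n H^*(X)) \stackrel{\delta_n}{\longrightarrow} \rho_{n+1}H^*(X),
\]
the source being $\rho_0(\Sigma^{-2}E_1^{-2,*}) \cong \Gamma^2(\rho_n H^*(X))$ by Theorem~\ref{thm:cohomEP_good}(2)(a). By Corollary~\ref{cor:good_auss_d1}, $\rho_0(\Sigma^{-2}E_\infty^{-2,*})$ is the kernel of this composite; pulling back the Frobenius sequence of Lemma~\ref{lem:U_ses_Frobenius} along the inclusion $\ker\delta_n \hookrightarrow \Phi(\rho_n H^*(X))$ yields
\[
0 \to \Lambda^2(\rho_n H^*(X)) \to \rho_0(\Sigma^{-2}E_\infty^{-2,*}) \to \ker\delta_n \to 0.
\]
Applying $l$ and using that it commutes with $\Gamma^2$ and $\Lambda^2$ on reduced modules, together with the natural isomorphism $l \circ \Phi \cong l$ on reduced modules and Lemma~\ref{lem:identify_F1}, transports this sequence onto the pullback defining $\widetilde{\Gamma^2 \circ F_1}$, since $K = l(\ker\delta_n)$ by construction. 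This delivers the first isomorphism $F_2/F_1 \cong \widetilde{\Gamma^2 \circ F_1}$.

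For the compatibility of $\overline{\cup}$ with the canonical inclusion, Proposition~\ref{prop:compatibility_star_cup} identifies $\overline{\cup}$ at the $E_\infty$-level with the $\star$-product. By Lemma~\ref{lem:nat_incl} this $\star$-product already lifts to the $E_1$-page as the monomorphism $\Lambda^2 H^*(\Sigma^{-n}X) \hookrightarrow H^*(D_{n,2}\Sigma^{-n}X) \cong \mathscr{E}_n(\Sigma^{-n}H^*(X))$. On passing to $\rho_0$ this lift realises the natural inclusion $\Lambda^2(\rho_n H^*(X)) \hookrightarrow \Gamma^2(\rho_n H^*(X))$ from the Frobenius sequence of Lemma~\ref{lem:U_ses_Frobenius}; after applying $l$, one obtains exactly the inclusion $\Lambda^2 \circ F_1 \hookrightarrow \widetilde{\Gamma^2 \circ F_1}$ of the pullback.

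The principal technical obstacle will be confirming that the $\star$-product summand is captured by the $\Lambda^2$-factor of the Frobenius sequence rather than contributing to the $\Phi$-quotient. This is addressed by unpacking the algebraic model of Proposition~\ref{prop:alg_model_2col}: the proof of Theorem~\ref{thm:cohomEP_good} shows that the only contributions to $\rho_0(H^*(D_{n,2}\Sigma^{-n}X))$ outside $\alnil_1$ arise from iterates of $\ddl_0$, whereas the $\star$-product image sits in the $\Lambda^2$-summand of $\mathscr{E}_n(\Sigma^{-n}H^*(X))$, which is disjoint from the $\ddl_0$ contributions and maps isomorphically onto the kernel of the Frobenius projection $\Gamma^2 \twoheadrightarrow \Phi$ after localization.
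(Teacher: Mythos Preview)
Your proof is correct and follows the same approach as the paper's own (terse) argument, which likewise rests on Corollaries~\ref{cor:good_auss_d1} and~\ref{cor:d1_ngeq2}; you have simply unpacked the details left implicit there. One small remark: the functor $l$ is defined only on $\unst$ and the relevant vanishing is on $\nil_1$ rather than $\alnil_1$ (Corollary~\ref{cor:alnil_1_red} concerns $\hom$-vanishing, not $l$), but since $\Sigma^{-2}E_\infty^{-2,*}$ is genuinely unstable as a subquotient of $H^*(\Omega^n X)$, your reduction to $\rho_0$ is justified and no harm is done.
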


\begin{proof}
 The result follows from Corollary \ref{cor:good_auss_d1}, using the 
identification of the differential $d_1$ modulo nilpotents, which follows from 
Corollary \ref{cor:d1_ngeq2}.
\end{proof}

\begin{nota}
 For $n \geq 2$, let $\omega_ X \in \ext^2 _\f (K, F_1)$ be the
Yoneda 
product of the class $\phi\circ F_1 \in \ext^1 _\f (\Lambda^2 \circ F_1 , F_1)
$ 
with the class in $\ext^1_\f (K , \Lambda^2 \circ F_1 )$ representing 
$F_2/F_1$.
\end{nota}

\begin{lem}
\label{lem:omega_X_descriptions}
For $n \geq 2$, 
\begin{enumerate}
 \item 
$\omega_X = i^* (e_1 \circ F_1)$, where $i^* : \ext^2_\f (F_1, F_1) \rightarrow 
\ext^2 _\f (K, F_1)$ is induced by the inclusion $i : K 
\hookrightarrow F_1$; 
\item 
there is a commutative diagram in which the three-term rows and columns are 
short exact:
\[
 \xymatrix{
 F_1 
 \ar@{=}[d]
 \ar[r]
 &
 S^2\circ F_1
 \ar[r]
 \ar[d]
 &
 \Lambda^2 \circ F_1 
 \ar[d]
 \\
 F_1 
 \ar[r]
 &
 F_2 
 \ar[r]
 &
F_2/F_1
 \ar[d]
 \\
 &&
 K
 }
\]
and $S^2 \circ F_1 \rightarrow F_2$ is induced by $\cup : S^2 (F_1 H^*
(\Omega^n 
X) ) \rightarrow F_2 H^* (\Omega^n X)$.
\item 
In particular $\omega_ X = 0  \in \ext^2 _\f (K, F_1)$.
\end{enumerate}
\end{lem}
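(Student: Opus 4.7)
The plan is to prove the three parts in sequence: (1) is a formal manipulation of Yoneda products, (2) constructs the cup-product map by applying the localization $l$ to the unstable-module diagram (\ref{eqn:filt_ss}), and (3) combines these via a long exact sequence argument.

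For (1), Lemma \ref{lem:identify:F2/F1} identifies the class representing $F_2/F_1$ as $i^*(\nu)$, where $\nu \in \ext^1_\f(F_1, \Lambda^2 \circ F_1)$ denotes the class of the short exact sequence $0 \to \Lambda^2 \circ F_1 \to \Gamma^2 \circ F_1 \to F_1 \to 0$. Naturality of the Yoneda product in the source variable then yields
\[
 \omega_X \;=\; (\phi \circ F_1) \cdot i^*(\nu) \;=\; i^*\big((\phi \circ F_1) \cdot \nu\big) \;=\; i^*(e_1 \circ F_1),
\]
where the final equality uses that the splice $\phi \cdot \nu$ is precisely the top row of (\ref{eqn:ext2_pullback_diag}), namely $e_1$, and that precomposition with the functor $F_1$ commutes with Yoneda product.

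For (2), the top row is (\ref{eqn:frob_ses}) precomposed with $F_1$, the middle row is the image under the exact functor $l$ of the filtration short exact sequence $0 \to F_1 H^*(\Omega^n X) \to F_2 H^*(\Omega^n X) \to F_2 H^*(\Omega^n X)/F_1 H^*(\Omega^n X) \to 0$ (using Lemma \ref{lem:identify_F1} and Proposition \ref{prop:compatibility_star_cup}), and the right column is Lemma \ref{lem:identify:F2/F1}. The vertical arrow $S^2 \circ F_1 \to F_2$ is obtained by applying $l$ to the cup-product map $S^2 F_1 H^*(\Omega^n X) \to F_2 H^*(\Omega^n X)$ furnished by Lemma \ref{lem:F_2UM} and the compatibility diagram (\ref{eqn:filt_ss}) of Remark \ref{rem:cup_product}. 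Commutativity of the upper two squares then reduces to commutativity of (\ref{eqn:filt_ss}) in $\unst$, after applying the exact symmetric monoidal functor $l$.

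For (3), define $P \subset F_2$ to be the preimage of $\Lambda^2 \circ F_1 \subset F_2/F_1$ under $F_2 \twoheadrightarrow F_2/F_1$, so that both $0 \to P \to F_2 \to K \to 0$ and $0 \to F_1 \to P \to \Lambda^2 \circ F_1 \to 0$ are short exact. The diagram of (2) ensures that $S^2 \circ F_1 \to F_2$ factors through $P$, supplying a map of short exact sequences from the top row of (2) to the $P$-sequence which is the identity on both $F_1$ and $\Lambda^2 \circ F_1$; hence $[P] = \phi \circ F_1$ in $\ext^1_\f(\Lambda^2 \circ F_1, F_1)$. Applying $\hom_\f(K, -)$ to the $P$-sequence yields
\[
 \ldots \to \ext^1_\f(K, P) \to \ext^1_\f(K, \Lambda^2 \circ F_1) \stackrel{\partial}{\longrightarrow} \ext^2_\f(K, F_1) \to \ldots,
\]
with $\partial = (\phi \circ F_1) \cdot (-)$. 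Since the class $[F_2] \in \ext^1_\f(K, P)$ pushes forward along $P \twoheadrightarrow \Lambda^2 \circ F_1$ to $[F_2/F_1]$, exactness gives $\omega_X = \partial([F_2/F_1]) = 0$. The main technical obstacle is the verification in (2) that the cup-product map descends consistently modulo nilpotents; once this compatibility is in place, (3) follows formally from the long exact sequence.
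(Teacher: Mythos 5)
Your argument is correct and follows the same route the paper takes: part (1) via naturality of the Yoneda product and the observation that $\phi \cdot D\phi$ is the top row of (\ref{eqn:ext2_pullback_diag}), part (2) by applying $l$ to (\ref{eqn:filt_ss}) together with the $\star$-product compatibility of Proposition \ref{prop:compatibility_star_cup}, and part (3) by a standard long-exact-sequence vanishing argument. The paper's proof simply states "the final point follows from homological algebra"; your construction of $P \subset F_2$ as the preimage of $\Lambda^2 \circ F_1$, the identification $[P] = \phi \circ F_1$, and the pushforward of $[F_2]$ to $[F_2/F_1]$ is exactly the expected elaboration of that remark.
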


\begin{proof}
The first point follows from the identification  of $F_2/F_1$ given in Lemma 
\ref{lem:identify:F2/F1} and the definition of $\widetilde{\Gamma^2 \circ 
F_1}$. 

The second point is a consequence of the compatibility between the cup product 
and the $\star$-product and follows by combining Proposition 
\ref{prop:compatibility_star_cup} with 
the identifications of $F_1$ (Lemma \ref{lem:identify_F1}) and $F_2/F_1$. 

The final point follows from homological algebra.
\end{proof}

\begin{thm}
\label{thm:main}
 Suppose that $n \geq 2$ and $X$ is a topological space satisfying Hypothesis 
\ref{hyp:X_niln}. 
 If $F_1 = l \big(\rho_n H^* (X)\big)$ is a finite functor of polynomial degree 
exactly $d >0$, then 
$K:= l \Big(\mathrm{ker} \{ F_1 \stackrel{\delta_n}{\rightarrow} \rho_{n+1} H^* 
(X) \}\Big) $ has  polynomial degree $<d$.
 
 Equivalently, if $\rho_n H^* (X)$ is finitely generated over $\cala$ and lies 
in 
$\unst_d \backslash \unst_{d-1}$ for $d>0$, then 
 \[
  \delta_n : \Phi \rho_n H^* (X) 
  \rightarrow 
  \rho_{n+1} H^* (X)
 \]
has kernel in $\unst_{d-1}$, in particular $\delta_n$ is non-trivial.

Hence 
\begin{enumerate}
\item 
$ \rho_{n+1} H^* (X)\not \in \unst_{d-1}$;
 \item 
 $H^*(X) / \nlp_{n+2} H^* (X) $ is not an $n$-fold suspension. 
\end{enumerate}
\end{thm}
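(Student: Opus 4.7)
The plan is to force a contradiction between the vanishing of the obstruction class $\omega_X$ (Lemma \ref{lem:omega_X_descriptions}(3)) and the non-vanishing result Corollary \ref{cor:classes_S2_non-trivial}, under the hypothesis that $K$ has polynomial degree exactly $d$.

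First I would record that $K$ is naturally a subfunctor of $F_1$: the kernel $\ker \delta_n$ is a submodule of $\Phi \rho_n H^*(X)$, and at $p=2$ there is a natural identification $l(\Phi \rho_n H^*(X)) \cong F_1$ (since $\Phi$ doubles internal degrees but leaves the degree-zero part of $T_V$ unchanged). Since $F_1 \in \f_d \setminus \f_{d-1}$ and $\f_d$ is closed under subobjects, $K$ has polynomial degree $\leq d$. Suppose for contradiction that $K$ has polynomial degree exactly $d$; then Corollary \ref{cor:classes_S2_non-trivial}(1), applied to the inclusion $i : K \hookrightarrow F_1$ of finite functors of polynomial degree exactly $d$, gives $i^*(e_1 \circ F_1) \neq 0$ in $\ext^2_\f(K, F_1)$. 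By Lemma \ref{lem:omega_X_descriptions}(3), however, $\omega_X = i^*(e_1 \circ F_1)$ vanishes. This contradiction yields the principal conclusion: $K$ has polynomial degree $< d$.

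The equivalent statement in terms of the Krull filtration would follow at once from Proposition \ref{prop:krull_poly}, noting that $\ker \delta_n$ is reduced (as a submodule of the reduced module $\Phi \rho_n H^*(X)$, cf.\ Lemma \ref{lem:Phi}); thus the polynomial degree bound on $K$ translates into $\ker \delta_n \in \unst_{d-1}$. Non-triviality of $\delta_n$ then follows because $\Phi \rho_n H^*(X) \in \unst_d \setminus \unst_{d-1}$, so its kernel cannot be the entire module.

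For consequence (1), I would argue by contradiction: if $\rho_{n+1} H^*(X) \in \unst_{d-1}$, then $\mathrm{image}(\delta_n) \in \unst_{d-1}$, and combining with $\ker \delta_n \in \unst_{d-1}$ and closure of $\unst_{d-1}$ under extensions forces $\Phi \rho_n H^*(X) \in \unst_{d-1}$, contradicting the polynomial degree of $F_1$. For consequence (2), if $H^*(X)/\nlp_{n+2} H^*(X)$ were an $n$-fold suspension, the last sentence of Proposition \ref{prop:alg_d1_niln} would force $\delta_n = 0$, contradicting its non-triviality just established. The main obstacle is alignment: one must check that the obstruction class produced by the spectral sequence (via Lemma \ref{lem:omega_X_descriptions}(2) and the compatibility of $\cup$ with $\star$ in Proposition \ref{prop:compatibility_star_cup}) really is the pullback $i^*(e_1 \circ F_1)$ to which the non-vanishing Corollary \ref{cor:classes_S2_non-trivial} applies. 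Once that matching of extension classes is in place, the argument is formal homological algebra combined with the dictionary of Proposition \ref{prop:krull_poly}.
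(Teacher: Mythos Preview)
Your proposal is correct and follows essentially the same route as the paper: combine Lemma~\ref{lem:omega_X_descriptions} (which gives both $\omega_X = i^*(e_1 \circ F_1)$ and $\omega_X = 0$) with the non-vanishing from Corollary~\ref{cor:classes_S2_non-trivial} to force $K \in \f_{d-1}$, then read off the Krull-filtration statement and the consequences. The only cosmetic difference is that for consequence~(2) the paper cites Corollary~\ref{cor:les_Omegan} rather than Proposition~\ref{prop:alg_d1_niln}, but these amount to the same observation once one notes that $\rho_n$, $\rho_{n+1}$, and hence $\delta_n$, are unchanged by passing from $H^*(X)$ to $H^*(X)/\nlp_{n+2}H^*(X)$.
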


\begin{proof}
 Lemma \ref{lem:omega_X_descriptions} shows that the obstruction class 
$\omega_X$ is
trivial 
and also that it identifies with $i^* (e_1 \circ F_1)$. 
 If $F_1$ is a finite functor then, by  Theorem \ref{thm:split_mono_gen}, 
this class is non-trivial if $K$ has exact polynomial degree $d$. 

 The fact that $H^*(X) / \rho_{n+2} H^* (X) $ is not an $n$-fold suspension 
follows from the identification of $\delta_n$ in Corollary \ref{cor:les_Omegan}.
 \end{proof}

 \begin{cor}
 \label{cor:main_n>1}
  Let $M$ be an unstable module of finite type such that 
  \begin{enumerate}
   \item 
     $M \in \obj \nil_n$, for $2 \leq n \in \nat$;
  \item
$M$ is $n$-connected;
     \item 
     $\rho_n M$ is finitely generated over $\cala$ and lies in $\unst_d 
\backslash \unst_{d-1}$ for some $d >0$;
     \item 
     the morphism induced by $Sq_{n-1}$, $\Phi M \rightarrow \Sigma^{1-n}\big( 
M/ \nlp_{n+2} M \big)$ has image in $\unst_{d-1}$.
  \end{enumerate}
Then $M$ cannot be the reduced $\field_2$-cohomology of an $n$-connected space.
\end{cor}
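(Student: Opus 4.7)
The plan is a proof by contradiction: assume $M \cong H^*(X;\field_2)$ for some $n$-connected space $X$, and derive a contradiction with hypothesis (4) by appealing to Theorem \ref{thm:main}.

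First I would verify that $(X, M)$ fits the framework of Theorem \ref{thm:main}. Since $X$ is $n$-connected, $M = H^*(X)$ is $n$-connected as a graded module; hypotheses (1)--(3) give $M \in \nil_n$ of finite type, with $\rho_n M$ finitely generated and in $\unst_d \setminus \unst_{d-1}$. Applying Theorem \ref{thm:main} yields $\ker \delta_n \in \unst_{d-1}$. Combining this with the fact that $\Phi \rho_n M$ has Krull degree exactly $d$ (since $\Phi$ preserves the $l$-image in $\f$ up to isomorphism for reduced unstable modules, and $\Phi \rho_n M$ is itself reduced by Lemma \ref{lem:Phi}, so Proposition \ref{prop:krull_poly} applies), the quotient $\mathrm{image}(\delta_n) = \Phi \rho_n M / \ker \delta_n$ has Krull degree exactly $d$ and in particular is not contained in $\unst_{d-1}$.

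Next I would translate hypothesis (4) into a statement about $\delta_n$. By Proposition \ref{prop:alg_d1_niln} together with Remark \ref{rem:identify_delta}, the $Sq_{n-1}$-induced map $\Phi M \to \Sigma^{1-n}(M/\nlp_{n+2}M)$ of hypothesis (4) is precisely the lift of $\delta_n$: it factors through $\Phi M \twoheadrightarrow \Phi \rho_n M$ on the source and through the projection of $\Sigma^{1-n}(M/\nlp_{n+2}M)$ onto the piece identifying with $\rho_{n+1}M$ on the target. Hence $\mathrm{image}(\delta_n)$ is a subquotient of $\mathrm{image}(Sq_{n-1})$. Since $\unst_{d-1}$ is a Serre subcategory of $\unst$, hypothesis (4) forces $\mathrm{image}(\delta_n) \in \unst_{d-1}$, contradicting the first step. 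Hence no such $X$ can exist.

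The main obstacle lies in this second step: carefully identifying the $Sq_{n-1}$-map of hypothesis (4) with the lift of $\delta_n$ at the level of almost unstable modules. The target $\Sigma^{1-n}(M/\nlp_{n+2}M)$ is an extension in $\alunst$ of $\Sigma \rho_n M$ by $\Sigma^2 \rho_{n+1}M$, and one must verify that the image of the $Sq_{n-1}$ map projects trivially to the $\Sigma \rho_n M$ quotient, so that it sits inside $\Sigma^2 \rho_{n+1}M$ and descends to $\delta_n$. This relies on Lemma \ref{lem:Phi} (which ensures $\Phi M \in \nil_{2n}$) together with Corollary \ref{cor:alnil_1_red}, which forces the composite into the reduced quotient $\Sigma \rho_n M$ to vanish.
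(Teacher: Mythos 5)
Your proof is correct and follows the paper's intended approach: invoke Theorem \ref{thm:main} to obtain $\ker \delta_n \in \unst_{d-1}$, then use Proposition \ref{prop:alg_d1_niln} and Remark \ref{rem:identify_delta} to translate hypothesis (4) into the constraint $\mathrm{image}(\delta_n) \in \unst_{d-1}$, contradicting the fact that $\Phi \rho_n M$ has Krull degree exactly $d$. The paper's proof is a one-line sketch, and you have correctly spelled out the bookkeeping that it leaves implicit, namely the factorization of the $Sq_{n-1}$ map through $\Phi \rho_n M$ and into $\rho_{n+1}M$ (using the nilpotent filtration and Corollary \ref{cor:alnil_1_red} as you indicate), and the preservation of Krull degree by $\Phi$ on reduced modules via Proposition \ref{prop:krull_poly}.
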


 \begin{proof}
  A consequence of Theorem \ref{thm:main}, using the identification 
of $\delta_n$ from Section \ref{subsect:exploit_nil} (cf. 
  Remark \ref{rem:identify_delta}) to express the condition in terms of 
$Sq_{n-1}$.
 \end{proof}

 \begin{rem}
  The statement has an unavoidably technical nature, due to the identification 
of the morphism $\delta_n$ in terms of an operation $Sq_i$ with $i >0$. 
  In the case $n=1$, the analogous result is conceptually simpler, since the 
corresponding operation is the cup square $Sq_0$ (see Corollary
\ref{cor:refined_COR}).
 \end{rem}

 \subsection{The case $n=1$}
 \label{subsect:n=1}
 
 The exceptional case ($n=1$) corresponds to the 
Eilenberg-Moore spectral sequence. Theorem \ref{thm:cohomEP_good} implies that 
the cohomology 
 of $D_{1,2}(\Sigma^{-1} X)$ is the $\cala$-module $H^*(\Sigma^{-1} X)
^{\otimes 
2}$. The differential $d_1$ 
 is induced by the product $\mu : H^*( X)^{\otimes 2} {\rightarrow}
H^*(X)$, which factors by commutativity as 
 $
 S^2 (H^*( X)){\rightarrow}
H^*(X). 
  $ 
Since $H^* (X)$ is nilpotent, by hypothesis, this induces 
\[
S^2 ( \rho_1 H^* (X)) \rightarrow \rho_2 H^* (X). 
\]

\begin{nota}
 Write $K$ for the kernel of the corresponding morphism in $\f$:
 \[
 S^2 \circ  F_1 \rightarrow l \big(\rho_2 H^* (X)\big);
\]
\end{nota}

\begin{lem}
\label{lem:n=1_F2/F1}
 The quotient $F_2/F_1$ fits into the pullback diagram of short exact sequences:
  \[
  \xymatrix{
  0
  \ar[r]
  &
  \Lambda^2\circ F_1 
  \ar[r]
  \ar@{=}[d]
  &
F_2/F_1 
    \ar@{^(->}[d]
    \ar[r]
    &
  K    
   \ar@{^(->}[d] ^j 
  \ar[r]
  &
  0
  \\
  0
   \ar[r]
  &
  \Lambda^2\circ F_1 
  \ar[r]
  &
 T^2\circ F_1
    \ar[r]
    &
 S^2 \circ F_1
  \ar[r]
  &
  0
  }
 \]
\end{lem}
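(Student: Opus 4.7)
The plan is to follow the template of the proof of Lemma \ref{lem:identify:F2/F1}, adapted to the case $n=1$ where Theorem \ref{thm:cohomEP_good} identifies $\rho_0(H^*(D_{1,j}Y))$ with the tensor square $T^j \rho_0(H^* Y)$ (rather than with $\Gamma^j \rho_0(H^* Y)$). First, by Proposition \ref{prop:compatibility_star_cup} together with exactness of $l$ and vanishing of $l$ on $\nil$, one has an identification $F_2/F_1 \cong l(\rho_0(\Sigma^{-2} E_\infty^{-2,*}))$. Applying Corollary \ref{cor:good_auss_d1} with $t=n=1$ and $M = H^*(X)$, together with the identification $\rho_0(\Sigma^{-2} E_1^{-2,*}) \cong T^2(\rho_1 H^*(X))$ provided by Theorem \ref{thm:cohomEP_good}, yields
$$\rho_0(\Sigma^{-2} E_\infty^{-2,*}) \cong \ker\bigl\{T^2(\rho_1 H^*(X)) \rightarrow \rho_2 H^*(X)\bigr\}.$$

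The second step is to identify the map $T^2(\rho_1 H^*(X)) \rightarrow \rho_2 H^*(X)$ induced by $d_1$ modulo nilpotents. By Propositions \ref{prop:compat_d1} and \ref{prop:model_d1}, the algebraic $d_1$ on $\cale_1(\Sigma^{-1}\overline{K}) \cong (\Sigma^{-1}\overline{K})^{\otimes 2}$ is governed by the cup product on $\overline{K} = H^*(X)$. Graded commutativity of the cup product at $p=2$ then forces the induced map at the $\rho_0$-level to factor through the canonical symmetrization surjection $T^2 \twoheadrightarrow S^2$; the resulting map $S^2(\rho_1 H^*(X)) \rightarrow \rho_2 H^*(X)$ has, after applying $l$, kernel equal to $K$ by the definition appearing immediately before the lemma.

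Finally, applying the exact functor $l$ and using the natural isomorphisms $l(T^2 M) \cong T^2 \circ l(M)$ and similarly for $S^2$ and $\Lambda^2$ (which follow from the compatibility of Lannes' $T$-functor with tensor products), the kernel identifies with the pullback in $\f$ of $T^2 \circ F_1 \twoheadrightarrow S^2 \circ F_1$ along $j : K \hookrightarrow S^2 \circ F_1$. Combining with the standard short exact sequence $0 \rightarrow \Lambda^2 \circ F_1 \rightarrow T^2 \circ F_1 \rightarrow S^2 \circ F_1 \rightarrow 0$ produces the claimed pullback diagram. The main subtlety lies in the precise identification of $d_1$ modulo nilpotents as factoring through $S^2$: Proposition \ref{prop:model_d1} decomposes $d_1$ into a Singer-differential contribution $d_{1/1}$ and a product contribution, and one must check that for $n=1$ the two combine coherently into the cup-product-induced map, using Remark \ref{rem:identify_delta} to identify $d_{1/1}$ modulo nilpotents with the cup square $Sq_0 x = x \cdot x$.
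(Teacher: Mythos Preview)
Your proposal is correct and follows the approach the paper has in mind; the paper's own proof is simply ``Straightforward.'' Note that the identification of $d_1$ with the cup product and its factorization through $S^2$ by commutativity is already stated in the text immediately preceding the lemma, so your final paragraph re-derives setup that the paper takes as given --- once that is in hand, $F_2/F_1$ is just the kernel of $T^2\circ F_1 \to l(\rho_2 H^*(X))$, which is the stated pullback.
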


\begin{proof}
 Straightforward.
\end{proof}

 \begin{nota}
  For $n = 1 $, let $\omega_ X \in \ext^2 _\f (K, F_1)$ be the Yoneda product
of 
the class $\phi\circ F_1 \in \ext^1 _\f (\Lambda^2 \circ F_1 , F_1) $ with the 
class in $\ext^1_\f (K , \Lambda^2 \circ F_1 )$ representing $F_2/F_1$.
 \end{nota}

 The following result is the analogue for $n=1$ of Lemma 
\ref{lem:omega_X_descriptions}:
 
 \begin{lem}
\label{lem:omega_X_descriptions_n=1}
For $n =1$, 
\begin{enumerate}
 \item 
$\omega_X = j^* (\tilde{e}_1 \circ F_1)$, where $j^* : \ext^2_\f (S^2\circ F_1, 
F_1) \rightarrow \ext^2 _\f (K, F_1)$ is induced by the inclusion $j : K 
\hookrightarrow S^2 \circ F_1$; 
\item 
there is a commutative diagram in which the three-term rows and columns are 
short exact:
\[
 \xymatrix{
 F_1 
 \ar@{=}[d]
 \ar[r]
 &
 S^2 \circ F_1 
 \ar[r]
 \ar[d]
 &
 \Lambda^2 \circ F_1 
 \ar[d]
 \\
 F_1 
 \ar[r]
 &
 F_2 
 \ar[r]
 &
 F_2/F_1 
 \ar[d]
 \\
 &&K.
 }
\]
\item 
In particular $\omega_ X = 0  \in \ext^2 _\f (K, F_1)$.
\end{enumerate}
\end{lem}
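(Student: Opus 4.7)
The plan is to follow the structure of the proof of Lemma \ref{lem:omega_X_descriptions}, with the modifications required by the fact that in the $n=1$ setting $F_2/F_1$ is pulled back over $S^2 \circ F_1$ (via Lemma \ref{lem:n=1_F2/F1}), rather than over $F_1$, and the operation driving the differential $d_1$ is the cup square rather than a higher Steenrod operation.

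For part (1), I will unravel definitions. The class $\tilde{e}_1 \in \ext^2_\f(S^2, \id)$ is represented by the bottom row of diagram (\ref{eqn:ext2_pullback_diag}), the $4$-term exact sequence $0 \to \id \to S^2 \to T^2 \to S^2 \to 0$; precomposition with $F_1$ gives a representative of $\tilde{e}_1 \circ F_1$. Pulling the right-hand short exact sequence back along $j : K \hookrightarrow S^2 \circ F_1$ and invoking Lemma \ref{lem:n=1_F2/F1} to identify this pullback with the right column there, one obtains the $4$-term exact sequence $0 \to F_1 \to S^2 \circ F_1 \to F_2/F_1 \to K \to 0$. Splicing at $\Lambda^2 \circ F_1$, the image of $S^2 \circ F_1 \to F_2/F_1$, exhibits this as the Yoneda composite of the Frobenius sequence (\ref{eqn:frob_ses}) applied to $F_1$ with the bottom row of Lemma \ref{lem:n=1_F2/F1}, which is $\omega_X$ by definition.

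For part (2), the middle column requires a morphism $S^2 \circ F_1 \to F_2$. This is supplied by the cup product: Lemma \ref{lem:F_2UM} provides a morphism $S^2 F_1 H^*(\Omega X) \to F_2 H^*(\Omega X)$, and Proposition \ref{prop:compatibility_star_cup} identifies this, on passage to the quotient modulo nilpotents, with the $\star$-product of the Arone--Goodwillie spectral sequence. Applying the exact localization functor $l$ yields the desired arrow $S^2 \circ F_1 \to F_2$, and commutativity of the three rows and columns follows formally from this compatibility together with the Frobenius sequence (\ref{eqn:frob_ses}) and Lemma \ref{lem:n=1_F2/F1}.

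For part (3), the commutativity in part (2) induces a morphism of short exact sequences from the top row $0 \to F_1 \to S^2 \circ F_1 \to \Lambda^2 \circ F_1 \to 0$ into the pullback of the middle row $0 \to F_1 \to F_2 \to F_2/F_1 \to 0$ along $\Lambda^2 \circ F_1 \hookrightarrow F_2/F_1$; the five lemma shows this is an isomorphism. Hence $\phi \circ F_1 \in \ext^1_\f(\Lambda^2 \circ F_1, F_1)$ lies in the image of the restriction $\ext^1_\f(F_2/F_1, F_1) \to \ext^1_\f(\Lambda^2 \circ F_1, F_1)$ induced by $\Lambda^2 \circ F_1 \hookrightarrow F_2/F_1$. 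Since $\omega_X$ is the Yoneda product of $\phi \circ F_1$ with the class of the right column, exactness of the long $\ext$-sequence associated to $0 \to \Lambda^2 \circ F_1 \to F_2/F_1 \to K \to 0$ forces $\omega_X = 0$. The main obstacle is step (2): the careful identification of the algebraic cup product $S^2 \circ F_1 \to F_2$ with the map manufactured by the spectral sequence modulo nilpotents; once this is secured, parts (1) and (3) are essentially formal.
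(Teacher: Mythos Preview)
Your proposal is correct and follows essentially the same approach as the paper, which simply states that the proof is analogous to that of Lemma \ref{lem:omega_X_descriptions}, \emph{mutatis mutandis}, using Lemma \ref{lem:n=1_F2/F1}; you have merely spelled out those modifications in detail. One small remark: in part (3) the five-lemma step is unnecessary---the commutative square in part (2) already exhibits $\phi\circ F_1$ as the restriction of the class of the middle row along $\Lambda^2\circ F_1 \hookrightarrow F_2/F_1$, which is all that is needed to invoke the long exact sequence.
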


\begin{proof}
Analogous to the proof of Lemma \ref{lem:omega_X_descriptions}, {\em mutatis 
mutandis}, using 
Lemma \ref{lem:n=1_F2/F1}. 
\end{proof}

Before stating the theorem, it is worth resuming the situation; 
$F_1$ is of polynomial degree exactly $d>0$ and $\tilde{F_1}
\stackrel{i}{\hookrightarrow} F_1$ 
an arbitrary subfunctor. There is a commutative square of inclusions:
\[
 \xymatrix{
 \tilde{F_1} 
  \ar@{.>}[rd] _\alpha
 \ar@{^(->}[rr]^i 
  \ar@{^(->}[dd]
  &&
  F_1 
  \ar@{^(->}[dd]
 \\
 &
 K 
 \ar@{^(->}[rd]_j
 \\
 S^2 \circ \tilde{F_1} 
 \ar@{-->}[ur]_\beta
 \ar@{^(->}[rr]_{S^2 i} 
 &&
  S^2 \circ F_1. 
 }
\]
Applying $\ext^2_\f (-, F_1)$ gives a diagram of $\ext$-groups. Of particular 
interest are the following 
observations:

\begin{enumerate}
 \item 
 If the dotted factorization $\alpha$ exists, then the class $\alpha^ *
(\omega_X) \in \ext^2 _\f (\tilde{F_1}, F_1)$ coincides with 
$i^* (e_1 \circ F_1)$, in the notation of Corollary
\ref{cor:classes_S2_non-trivial}.
\item 
If the dashed factorization $\beta$ exists (and hence $\alpha$), then the class
$\beta^ * (\omega_X) \in \ext^2 _\f (S^2 \circ \tilde{F_1}, F_1)$ coincides
with 
$i^* (\tilde{e}_1 \circ F_1)$, in the notation of Corollary
\ref{cor:classes_S2_non-trivial}.
\end{enumerate}

\begin{thm}
\label{thm:main_n=1}
Let  $X$ be a simply-connected space such that $H^* 
(X)$ is nilpotent and of finite type and  $F_1= l \big(\rho_1 H^* (X)\big)$ is 
finite  of polynomial degree
exactly $d 
>0$. For a subfunctor $\tilde{F_1}$ of $F_1$ of polynomial degree exactly $d$,
the following properties hold:
\begin{enumerate}
 \item 
 the subfunctor $S^2 \circ \tilde{F_1} \subset S^2 \circ F_1$ is not contained
within $K$, equivalently the morphism 
\[
 S^2 \circ \tilde{F_1} \rightarrow l \big(\rho_2 H^* (X))
\]
is non trivial;
\item 
the subfunctor $\tilde{F_1} \subset S^2 \circ F_1$ is not contained within $K$,
equivalently the composite:
\[
 \tilde{F_1} \hookrightarrow  S^2 \circ \tilde{F_1} \rightarrow l \big(\rho_2
H^* (X))
\]
is non-trivial.
\end{enumerate}
Hence 
\begin{enumerate}
 \item 
 the image of $S^2 \circ \tilde{F_1} \rightarrow l \big(\rho_2 H^* (X))$ has
polynomial degree exactly $2d$,  in particular $ \rho_2 H^* (X) \not\in
\unst_{2d-1}$;
 \item 
 the morphism induced by $Sq_0$:
 \[
  \Phi \rho_1 H^* (X) \rightarrow  \rho_2 H^* (X)
 \]
has kernel in $\unst_{d-1}$, in particular is non-trivial, so that $H^* (X)/
\nlp_3 H^* (X)$ is not a suspension.
\end{enumerate}
\end{thm}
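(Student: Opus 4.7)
The plan is to derive both properties by contradiction, exploiting the vanishing $\omega_X = 0$ from Lemma \ref{lem:omega_X_descriptions_n=1} against the non-triviality of the classes supplied by Corollary \ref{cor:classes_S2_non-trivial}. First, if property (1) failed, then $S^2 \circ \tilde{F_1} \subset K$, so the dashed factorization $\beta : S^2 \circ \tilde{F_1} \to K$ would exist; naturality of the Yoneda product together with the description $\omega_X = j^*(\tilde{e}_1 \circ F_1)$ would then force $\beta^*(\omega_X) = i^*(\tilde{e}_1 \circ F_1)$ in $\ext^2_\f(S^2 \circ \tilde{F_1}, F_1)$, a class which is non-zero by Corollary \ref{cor:classes_S2_non-trivial}. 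Since $\omega_X = 0$, this is a contradiction. Property (2) follows by an entirely analogous argument using the dotted factorization $\alpha : \tilde{F_1} \to K$ and the identification $\alpha^*(\omega_X) = i^*(e_1 \circ F_1)$, which is non-zero by the same corollary.

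For the \emph{Hence} consequences, I identify the morphism $\Phi \rho_1 H^*(X) \to \rho_2 H^*(X)$ with the transformation induced by $Sq_0$ using Proposition \ref{prop:alg_d1_niln} and Remark \ref{rem:identify_delta}. Property (2) combined with Proposition \ref{prop:krull_poly} then translates directly to the statement that the kernel of this $Sq_0$-map lies in $\unst_{d-1}$; non-triviality combined with Corollary \ref{cor:les_Omegan} provides the non-suspension conclusion for $H^*(X)/\nlp_3 H^*(X)$. For the polynomial-degree assertion that the image of $S^2 \circ \tilde{F_1}$ in $l(\rho_2 H^*(X))$ has polynomial degree exactly $2d$, one argues that $S^2 \circ \tilde{F_1}$ admits no non-trivial quotient of polynomial degree strictly less than $2d$, so Property (1) forces the image to have polynomial degree exactly $2d$, whence $\rho_2 H^*(X) \not\in \unst_{2d-1}$.

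The main obstacle is the final polynomial-degree step, which requires going beyond mere non-triviality to the assertion that the image attains the maximal polynomial degree. Verifying the ``purity'' of $S^2 \circ \tilde{F_1}$---that every non-trivial quotient has polynomial degree $2d$---is the delicate point: since $\tilde{F_1}$ has polynomial degree exactly $d$, this should follow from the composition factor structure of $S^2 \circ \tilde{F_1}$ in $\f$ (using that $\Lambda^2 \circ \tilde{F_1}$ is the only ``top'' quotient and already has polynomial degree $2d$), or alternatively from the split-monomorphism formalism underlying Theorem \ref{thm:split_mono_gen} applied to the quotient map itself.
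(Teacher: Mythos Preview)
Your argument for properties (1) and (2) matches the paper's exactly: contradiction via pulling back $\omega_X$ along $\beta$ (respectively $\alpha$) and invoking Corollary \ref{cor:classes_S2_non-trivial} against the vanishing $\omega_X=0$ from Lemma \ref{lem:omega_X_descriptions_n=1}. The treatment of the second \emph{Hence} consequence (the $Sq_0$-kernel) is also essentially that of the paper: one takes $\tilde{F_1}=K\cap F_1$ and applies property (2) to conclude this cannot have polynomial degree $d$.

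The gap is in the first \emph{Hence} consequence. Your claim that $S^2\circ\tilde{F_1}$ admits no non-trivial quotient of polynomial degree $<2d$ is \emph{false} for a general $\tilde{F_1}$ of degree exactly $d$: if $\tilde{F_1}$ surjects onto some $G$ with $\deg G<d$, then $S^2\circ\tilde{F_1}\twoheadrightarrow S^2\circ G$ has degree $<2d$. Neither of your suggested workarounds repairs this: the composition-factor sketch via $\Lambda^2\circ\tilde{F_1}$ fails for the same reason (replace $\Lambda^2$ by $\Lambda^2\circ G$), and the split-monomorphism formalism of Theorem \ref{thm:split_mono_gen} does not directly control quotient degrees in the way you need.

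The paper's resolution is a simple reduction you missed: since the image of $S^2\circ\tilde{F_1}'\to l(\rho_2 H^*(X))$ is contained in that of $S^2\circ\tilde{F_1}$ for any subfunctor $\tilde{F_1}'\subset\tilde{F_1}$, it suffices to prove the degree-$2d$ statement after replacing $\tilde{F_1}$ by $\overline{\tilde{F_1}}:=\ker(\tilde{F_1}\twoheadrightarrow q_{d-1}\tilde{F_1})$. This $\overline{\tilde{F_1}}$ still has polynomial degree exactly $d$ (so property (1) applies to it), and by construction has \emph{no} non-trivial quotient of degree $<d$; for such a functor the assertion that $S^2\circ\overline{\tilde{F_1}}$ has no non-trivial quotient of degree $<2d$ does hold (the paper cites \cite{CGPS}).
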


\begin{proof}
 The argument follows the proof of Theorem \ref{thm:main}, {\em mutatis 
mutandis}.  

Suppose that $\tilde{F_1} \subset F_1$ is a subfunctor such that $S^2 \circ 
\tilde{F_1} \subset K$. 
Then the class $\omega_X \in \ext^2 _\f (K, F_1)$ pulls back to the class of 
$\ext^2_\f (S^2 \circ \tilde{F_1}, F_1)$ which is the  pullback  of 
$\tilde{e}_1 
\circ 
F_1$ 
under the morphism induced by $\tilde{F_1} \subset F_1$. Suppose that 
$\tilde{F}_1$ has polynomial degree exactly $d$, 
then this class is non-trivial, by Corollary \ref{cor:classes_S2_non-trivial}; 
this contradicts Lemma \ref{lem:omega_X_descriptions_n=1}.

The argument using the hypothesis that $
\tilde{F_1} \subset K$ is similar {\em mutatis mutandis}, again using Corollary
\ref{cor:classes_S2_non-trivial}.

To show that the polynomial degree of the image of $S^2 \circ \tilde{F_1} 
\rightarrow l \big(\rho_2 H^* (X))$ is exactly $2d$, without loss of generality 
we may assume that $\tilde{F_1}$ has no non-trivial quotient
of polynomial degree $<d$. In this case, the functor $S^2 \circ \tilde{F_1}$
has 
no non-trivial quotient of degree $<2d$ (see \cite{CGPS}, for example).

Finally, the composite $\tilde{F_1} \hookrightarrow  S^2 \circ \tilde{F_1}
\rightarrow l \big(\rho_2 H^* (X))$ 
is given,  upon passage to $\unst/ \nil$, by the morphism $\Phi \Sigma \rho_1 
H^* (X)
\rightarrow \Sigma^2 \rho_2 H^* (X)$
 induced by $Sq_0$, noting that the natural isomorphism $\Phi \Sigma \cong 
\Sigma^2 \Phi$ allows
 the suspensions to be removed. Taking $\tilde{F_1}:= K \cap F_1$, the above 
argument shows
that this has polynomial degree $\leq d-1$, whence the result.
\end{proof}

\begin{rem}
 Theorem \ref{thm:main_n=1} is an improvement upon the main result of 
\cite[Section 6]{CGPS}, where only the case $\tilde{F_1} = \overline{F_1}$, 
the kernel of $F_1 \twoheadrightarrow q_{d-1}F_1$ (see Example
\ref{lem:omega_X_descriptions_n=1} for this notation) is considered, for the 
morphism
 $S^2 \circ \overline{F_1} \rightarrow l (\rho_2 H^* (X))$. The above theorem 
 unifies this with the case $n \geq 2$. 

The improvement is obtained by using  Theorem \ref{thm:split_mono_gen} rather
than \cite[Theorem 4.8]{KIII}.
\end{rem}

\begin{cor}
\label{cor:refined_COR}
Let $K$ be a connected unstable algebra of finite type over $\field_2$ such 
that 
$\overline{K}$ is $1$-connected and $\rho_1 \overline{K}$ is finitely generated 
over $\cala$ and
lies in $\unst_d \backslash \unst_{d-1}$ for $d>0$ (in particular is non-zero). 

If the morphism induced by the cup square, $
Sq_0 : \Phi \overline{K} \rightarrow K / \nlp_3 K 
$, 
has image in $\unst_{d-1}$, then  $\overline{K}$ cannot be the reduced 
$\field_2$-cohomology of a simply-connected space. 
\end{cor}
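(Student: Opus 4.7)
The plan is to argue by contradiction: suppose $\overline{K}$ is realized as $H^*(X)$ for a simply-connected space $X$ of finite type, and apply Theorem~\ref{thm:main_n=1} with the subfunctor $\tilde{F_1} = F_1$ to derive constraints on the $Sq_0$-induced morphism that are incompatible with the corollary's hypothesis.

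First I would verify the hypotheses of Theorem~\ref{thm:main_n=1}. By Proposition~\ref{prop:krull_poly}, the reduced module $\rho_1 \overline{K} \in \unst_d \setminus \unst_{d-1}$ corresponds to a finite functor $F_1 := l(\rho_1 \overline{K})$ of polynomial degree exactly $d$, and the one-connectedness together with the finite generation assumption ensure that $\overline{K}$ is nilpotent (so Theorem~\ref{thm:main_n=1} applies to $H^*(X) = \overline{K}$). Taking $\tilde{F_1} = F_1$ in part~(2) of that theorem, one concludes that the morphism $\delta_1 : \Phi \rho_1 \overline{K} \to \rho_2 \overline{K}$ induced by $Sq_0$ has kernel lying in $\unst_{d-1}$.

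Next I would translate the hypothesis on $Sq_0 : \Phi \overline{K} \to K/\nlp_3 K$ into a statement about $\delta_1$. The cup-square morphism sends $\Phi \nlp_i \overline{K}$ into $\nlp_{2i} \overline{K}$, because multiplication $\overline{K} \otimes \overline{K} \to \overline{K}$ carries $\nlp_i \otimes \nlp_j$ into $\nlp_{i+j}$ and $\Phi$ respects the nilpotent filtration by Lemma~\ref{lem:Phi}. Hence the image of $Sq_0 : \Phi \overline{K} \to \overline{K}/\nlp_3 \overline{K}$ lies in $\nlp_2 \overline{K}/\nlp_3 \overline{K} \cong \Sigma^2 \rho_2 \overline{K}$; moreover, the kernel $\Phi \nlp_2 \overline{K}$ of the surjection $\Phi \overline{K} \twoheadrightarrow \Sigma^2 \Phi \rho_1 \overline{K}$ is mapped into $\nlp_4 \overline{K} \subset \nlp_3 \overline{K}$, so the map factors and, after desuspension, is identified with $\delta_1$ via Remark~\ref{rem:identify_delta}. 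The hypothesis therefore translates into: the image of $\delta_1$ also lies in $\unst_{d-1}$.

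Combining, both $\ker \delta_1$ and $\mathrm{image}(\delta_1)$ lie in $\unst_{d-1}$. Since the Krull subcategory $\unst_{d-1}$ is closed under extensions, $\Phi \rho_1 \overline{K}$ itself lies in $\unst_{d-1}$. But $\Phi \rho_1 \overline{K}$ is reduced (Lemma~\ref{lem:Phi}) and $l(\Phi \rho_1 \overline{K}) \cong l(\rho_1 \overline{K}) \in \f_d \setminus \f_{d-1}$, because the Frobenius twist on functors from $\field_2$-vector spaces is trivial; Proposition~\ref{prop:krull_poly} then forces $\Phi \rho_1 \overline{K} \in \unst_d \setminus \unst_{d-1}$, a contradiction. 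The main obstacle is the identification carried out in the second paragraph: pushing the hypothesised cup-square map accurately through the nilpotent filtration so that it matches the abstract $\delta_1$ of Proposition~\ref{prop:alg_d1_niln}, carefully tracking the suspensions and the interaction of $\Phi$ with the nilpotent and Krull filtrations.
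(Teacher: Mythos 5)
Your proposal matches the paper's intended argument: the paper proves this corollary by the one-line citation ``Follows from Theorem~\ref{thm:main_n=1} (cf.\ Corollary~\ref{cor:main_n>1})'', and you have correctly unpacked that citation. The core logic --- translate the hypothesis on $Sq_0 : \Phi\overline{K} \rightarrow K/\nlp_3 K$ into the statement that $\mathrm{image}(\delta_1) \in \unst_{d-1}$, combine with the theorem's conclusion that $\ker\delta_1 \in \unst_{d-1}$, use closure of $\unst_{d-1}$ under extensions to conclude $\Phi\rho_1\overline{K} \in \unst_{d-1}$, and contradict Proposition~\ref{prop:krull_poly} via $l(\Phi\rho_1\overline{K}) \cong l(\rho_1\overline{K})$ --- is exactly the paper's route, and you have correctly identified the factorisation of $Sq_0$ through $\Sigma^2\Phi\rho_1\overline{K} \rightarrow \Sigma^2\rho_2\overline{K}$ as the technical crux.

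One caveat: you assert that $1$-connectedness of $\overline{K}$ (together with finite generation of $\rho_1\overline{K}$) forces $\overline{K}$ to be nilpotent. That implication is false as stated --- $\field_2[x]$ with $|x|=2$ is $1$-connected and reduced. Theorem~\ref{thm:main_n=1} requires $H^*(X)$ to be nilpotent, and the $n\geq 2$ analogue Corollary~\ref{cor:main_n>1} makes $M\in\nil_n$ an explicit hypothesis; that hypothesis appears to be tacitly assumed (or dropped) in the statement of Corollary~\ref{cor:refined_COR}. You should either add $\overline{K}\in\nil_1$ to the hypotheses, or else observe that the $Sq_0$ condition projected onto $\rho_0\overline{K}$ forces $\rho_0\overline{K}\in\unst_{d-1}$ but does \emph{not} force it to vanish, so the nilpotence really must be assumed. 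This is a gap in the paper's terse statement that your write-up inherited rather than resolved; it does not reflect a difference of approach from the paper.
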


\begin{proof}
Follows from Theorem \ref{thm:main_n=1} (cf. Corollary \ref{cor:main_n>1}). 
\end{proof}

\begin{rem}
Corollary \ref{cor:refined_COR} applies if the image of $Sq_0 : \Phi 
\overline{K} 
\rightarrow K$ lies in $\unst_{d-1}$. This occurs, for example, if $Sq_0$ is 
zero, the case considered 
in Theorem \ref{THM:K}. Under this hypothesis, $QK$ is a quotient of 
$\Sigma \Omega \overline{K}$, and it follows that $\rho_1 \overline{K}$ 
is isomorphic to $\rho_1 QK$.
\end{rem}

\section{The case $p$ odd}
\label{sect:podd}

This section  sketches the modifications necessary for $p$ an 
odd prime, writing $H^* (-)$ for singular cohomology with 
$\field_p$-coefficients. Here the difference stems from the fact that the 
enveloping algebra $UM$ of 
an unstable module $M$ is the quotient of the free symmetric algebra $S^* (M)$ 
by the relation $x^p = P_0 x$ for $x$ of even degree; in particular, the 
relation depends only upon the even degree elements of $M$.

The inclusion of the full subcategory $\unst ' \subset \unst$ of modules 
concentrated in even degree admits a right adjoint and induces an 
equivalence $\unst ' / \nil ' \cong \unst / \nil$, where $\nil' = \unst ' \cap 
\nil$ (see \cite[Section 5.1]{schwartz_book}). Since the final part of the 
proof 
works modulo nilpotents, this allows the avoidance of problems at $p$ 
odd associated with the action of the Bockstein (cf. \cite{Schwartz} and 
\cite{BHRS}). 

The fundamental exact sequence of functors (cf. Section 
\ref{subsect:basic_functors}) is now 
\[
 0 \rightarrow \id \stackrel{x \mapsto x^p}{\longrightarrow} S^p 
\longrightarrow 
\overline{S^p} \rightarrow 0,
\]
where the truncated symmetric power $\overline{S^p}$ is simple and self dual, 
together with the norm sequence:
\begin{eqnarray*}
 \xymatrix{
 0
 \ar[r]
 &
 \id 
 \ar[r]
 &
 S^p
 \ar[r]^N
 &
 \Gamma^p 
 \ar[r]
 &
 \id 
 \ar[r]
 &
 0
 }
\end{eqnarray*}
which represents a non-zero class in $\ext^2_{\f} (\id, \id)$ \cite{FLS}.

The $E^1$-page of the Arone-Goodwillie spectral sequence for  $X \mapsto 
\Sigma^\infty 
\Omega^n X$ at $p$ odd is calculated as in \cite{BHRS}. The arguments again use  
almost unstable modules (for the odd primary case); 
although only a very limited part of the structure of the $E^1$-page intervenes, 
it is necessary to establish that 
the spectral sequence is good almost unstable (as in Section 
\ref{sect:cohomEP}). 

One considers the $p$th filtration $F_p H^* (\Omega^n X) \subset H^* (\Omega^n 
X)$; the multiplicative structure induces a morphism of unstable modules 
\[
 \bigoplus _{j=2}^{p-1} S^j (F_1 H^*(\Omega^n X) )
 \rightarrow 
 F_p (H^* (\Omega^n X))
\]
which is a monomorphism in $\unst / \nil$. The cokernel 
$\overline{F_p} (H^* (\Omega^n X)) $ lies in  a short exact sequence in $\unst 
/\nil$:
\[
 0
 \rightarrow
 \Sigma^{-1} E^{-1, *}_\infty 
 \rightarrow 
  \overline{F_p} (H^* (\Omega^n X))
  \rightarrow 
   \Sigma^{-p} E^{-p, *}_\infty 
\rightarrow 
0.
\]
Moreover the cup product induces 
\[
 S^p (F_1 H^*(\Omega^n X))
 \rightarrow 
 \overline{F_p} (H^* (\Omega^n X))
\]
which is a monomorphism in $\unst / \nil$. 

The identification of $ \Sigma^{-p} E^{-p, *}_\infty $ modulo nilpotents relies 
on the calculation of $d_{p-1} : \Sigma   ( \Sigma^{-p} E^{-p, *}_{p-1}) 
\rightarrow   \Sigma^{-1} E^{-1, *}_{p-1}$, since \cite[Proposition 4.1]{BHRS} 
 shows that lower differentials act trivially on the dual Dyer-Lashof 
operations. 

Hereafter, the argument proceeds as for the case $p=2$, {\em mutatis mutandis}, 
by analysing $  
\overline{F_p} (H^* (\Omega^n X))$ (for $p=2$, this is simply $ F_2 (H^* 
(\Omega^n X))$). 

Since the proof reduces to an argument in $\unst/\nil$, the ability to work in 
$\unst'$ provides a useful simplification, making the 
parallel with the case $p=2$ transparent. For example,  for $n>1$, the analogue 
of Corollary \ref{cor:d1_niln} for $d_{p-1}$ reduces to considering  a morphism 
\[
 \Gamma^p (\rho'_n H^* (X)) \rightarrow \rho'_{n+1} H^* (X). 
\]
where, for $M$ an unstable module, $\rho'_k M$ denotes the largest submodule of 
$\rho_k M$ concentrated in even degrees. Moreover, Lemma 
\ref{lem:U_ses_Frobenius} has the following analogue: 
for $M' \in \obj \unst'$ there is a short exact sequence:
\[
 0 
 \rightarrow 
 \overline{S^p} M' 
 \rightarrow 
 \Gamma^p M'
 \rightarrow 
 \Phi M' 
 \rightarrow 
0
 \]
where the Frobenius functor $\Phi$ restricted to $\unst'$ is directly analogous 
to that for $p=2$.
As in the case $p=2$, the contribution $ \Phi(\rho'_n H^* (X))$ is provided by  
$\ddl_0$.

The proofs of the analogues of the results of Section \ref{sect:main} follow an 
identical
strategy, depending upon Theorem \ref{thm:split_mono_gen}, which is prime 
independent.

\begin{rem}
A refinement of the results can be obtained by exploiting the weight splitting 
of $\unst / \nil $  associated  
to the action of the multiplicative group $\field_p^\times$. 
\end{rem}

\providecommand{\bysame}{\leavevmode\hbox to3em{\hrulefill}\thinspace}
\providecommand{\MR}{\relax\ifhmode\unskip\space\fi MR }
\providecommand{\MRhref}[2]{%
  \href{http://www.ams.org/mathscinet-getitem?mr=#1}{#2}
}
\providecommand{\href}[2]{#2}

\end{document}